\definecolor{blue_links}{RGB}{13,0,180} 
\newtheorem{theorem}{Theorem}[section]
\newtheorem{lemma}[theorem]{Lemma}
\newtheorem{proposition}[theorem]{Proposition}
\newtheorem{definition}[theorem]{Definition}
\newtheorem{rem}[theorem]{Remark}
\newtheorem{example}[theorem]{Example}
\newcommand{\N}{\mathbb{N}}
\newcommand{\R}{\mathbb{R}}
\newcommand{\ZZZ}{\color{black}} 
\newcommand{\UUU}{\color{black}} 
\newcommand{\EEE}{\color{black}}
\newcommand\ep{\varepsilon}
\DeclareMathOperator*{\aplim}{ap-\lim}
\def\Id{\mathbf{Id}}
\def\id{\mathbf{id}}
\def\eps{\varepsilon}
\def\dist{\operatorname{dist}}
\def\XXint#1#2#3{{\setbox0=\hbox{$#1{#2#3}{\int}$}
     \vcenter{\hbox{$#2#3$}}\kern-.5\wd0}}
\newcommand{\YYY}{\color{black}}
\numberwithin{equation}{section}
\begin{document} 

\title[Non-interpenetration conditions from nonlinear to linearized Griffith fracture]
{Non-interpenetration conditions in the passage from nonlinear to linearized Griffith fracture}
\author[S. Almi]{Stefano Almi}
\address[Stefano Almi]{Department of Mathematics and Applications ``R.~Caccioppoli'', University of Naples Federico II, Via Cintia, Monte S. Angelo, 80126 Napoli, Italy.}
\email{stefano.almi@unina.it}
\author[E. Davoli] {Elisa Davoli} 
\address[Elisa Davoli]{Institute of Analysis and Scientific Computing, TU Wien, 
Wiedner Hauptstra\ss e 8-10, 1040 Vienna, Austria}
\email{elisa.davoli@tuwien.ac.at}
\author[M. Friedrich]{Manuel Friedrich} 
\address[Manuel Friedrich]{Department of Mathematics, Friedrich-Alexander Universit\"at Erlangen-N\"urnberg. Cauerstr.~11,
D-91058 Erlangen, Germany, \& Mathematics M\"{u}nster,  
University of M\"{u}nster, Einsteinstr.~62, D-48149 M\"{u}nster, Germany}
\email{manuel.friedrich@fau.de}

\subjclass[2010]{49J45, 70G75,  74B20, 74G65, 74R10, 74A30}
\keywords{Non-interpenetration, Griffith fracture, linearization, Ciarlet-Ne\v{c}as, contact condition}

\begin{abstract}
 We characterize the passage from nonlinear to linearized Griffith-fracture theories under non-interpenetration constraints. In particular, sequences of deformations satisfying a Ciarlet-Ne\v{c}as condition in $SBV^2$   and for which a convergence of the energies is ensured, are shown to admit asymptotic representations in $GSBD^2$ satisfying a suitable contact condition. With an explicit counterexample, we prove that this result fails if convergence of the energies does not hold.  We further prove that each limiting displacement satisfying the contact condition can be approximated by an energy-convergent sequence of deformations fulfilling a Ciarlet-Ne\v{c}as condition. The proof relies on a piecewise Korn-Poincar\'e inequality in $GSBD^2$, on a careful blow-up analysis around jump points, as well as on a refined $GSBD^2$-density result guaranteeing enhanced contact conditions for the approximants.

\end{abstract}
\maketitle

\section{Introduction}

 A crucial question in materials science is to provide an accurate description of phenomena exhibiting an intrinsic nonlinear nature, as well as to establish the range of validity of their linearized approximations. 
 A key challenge in this direction consists in the mathematical modeling of impenetrability. In this paper we provide an analysis of impenetrability constraints for brittle hyperelastic materials and address the passage from nonlinear to linearized descriptions for Griffith-fracture theories.

To illustrate the main difficulties involved in the mathematics of impenetrability, consider the simplest modeling scenario in which finite strain deformations play a significant role, namely that of nonlinear elasticity. A standard constitutive assumption for large strain frameworks is the requirement that a body should not be allowed to interpenetrate itself during elastic deformations, and that extreme compressions should lead to a blow-up of the elastic energy, therefore being energetically unfavorable. Although the theory of nonlinear elasticity is by now quite classical (see, e.g.,~\cite{ball} for an introduction to the topic), necessary conditions on the stored-energy density guaranteeing existence of minimizers of nonlinearly elastic energy functionals encoding the behavior described above are not yet known, cf.~\cite{ball2} and~\cite{benesova.kruzik}.

The existence of injective energy minimizers for hyperelastic materials was pioneered by {\sc J.~Ball} in \cite{ball3} (see also \cite{sverak} for a regularity analysis of minimizing configurations, and \cite{fonseca.gangbo} for a related local invertibility result). In the subsequent work \cite{ball}, it was pointed out that requiring the positivity of the determinant of the gradient of deformations is neither enough to ensure local injectivity everywhere, nor sufficient to prevent a global loss of injectivity. In \cite{Ciarlet-Necas:1987}, {\sc P.G.~Ciarlet} and {\sc J.~Ne\v{c}as} proposed a condition compatible with the existence theory of minimizers, ensuring frictionless contact, non-selfpenetrability, as well as injectivity almost everywhere when combined with a positivity constraint on the determinant of nonlinear strains. For an open bounded domain $\Omega\subset \R^d$, $d\in \mathbb{N}$, the Ciarlet-Ne\v{c}as condition reads as follows:
\begin{equation}
\label{eq:CN-classic}
\int_{\Omega}{\rm det}\,\nabla y(x)\, {\rm d}x\leq  \mathcal{L}^d(y(\Omega)),
\end{equation}
where $y(\Omega)$ is the deformed set and $\mathcal{L}^d(y(\Omega))$ its $d$-dimensional Lebesgue measure.

 Almost  everywhere injectivity of deformations has been analyzed in~\cite{BouHenMol2020} for limits of Sobolev homeomorphisms, in~\cite{MolVod2020} in  the  presence of distorsion penalizations (see, e.g.,~\cite{HenKos2014, Res1989}), and in~ \cite{healey.kromer} for second-grade non-simple materials (cf.~\cite{Toupin:62,Toupin:64}), whereas a first numerical implementation of the Ciarlet-Ne\v{c}as condition as an energy penalization has been exploited in \cite{mielke.roubicek} in the setting of finite strain elastoplasticity. For completeness, we also mention the numerical analysis of a nonlocal alternative to the Ciarlet-Ne\v{c}as condition for non-simple materials in \cite{kromer.valdman}, as well as \cite{kromer} for a generalization of \cite{ball}. 

We focus here on impenetrability constraints in the setting of brittle hyperelastic bodies, and restrict ourselves to the planar case for simplicity.  Following Griffith's theory of crack propagation \cite{Bourdin-Francfort-Marigo:2008, Francfort-Marigo:1998, griffith}, for a set $\Omega\subset \mathbb{R}^2$, the variational modeling of fracture mechanics relies on the competition between a frame-indifferent bulk energy and a surface term. This in turn rewrites as the minimization of a functional of the form:
\begin{align}\label{rig-eq: Griffith-base}
\mathcal{E}(y) =\int_{ \Omega} W(\nabla y(x)) \,  {\rm d} \EEE x +  \kappa  \mathcal{H}^{1}(J_y),\end{align}
where $W	\colon \mathbb{M}^{2\times 2}\to [0,+\infty)$ is a nonlinear elastic energy density, $\kappa>0$ is a material constant, deformations $y\colon\Omega\to \R^2$ are meant to belong to the class $SBV(\Omega)$ of \emph{special functions of bounded variation} \cite{Ambrosio-Fusco-Pallara:2000}, \EEE  $\nabla y$ denotes the absolutely continuous part of their gradient, $J_y$ is their jump set, and the latter energy-term, $\mathcal{H}^{1}(J_y)$, penalizes the crack length. See also \cite{ambrosio.braides, DeGiorgi-Ambrosio:1988}  for an introduction to the topic.

A generalization of \eqref{eq:CN-classic} in this setting has been introduced and characterized in \cite{Giacomini-Ponsiglione:2008}. In the passage from nonlinear elasticity to large-strain Griffith theories, a first modeling difficulty is related to the fact that deformations do not admit, a priori, continuous representatives, so that the notion of volume of the deformed set in the right-hand side  of~\eqref{eq:CN-classic} is no longer well-posed. In \cite{Giacomini-Ponsiglione:2008}, this difficulty has been solved by means of the weaker notion of measure-theoretic image of the deformed set $[y(\Omega)]$. This, in turn, is defined by considering approximate-differentiability points of admissible deformations, cf.\ Definitions \ref{def: mti} and \ref{def: CN} below for the precise formulations. In the same paper, existence of minimizers of \eqref{rig-eq: Griffith-base} inheriting the Ciarlet-Ne\v{c}as condition is ensured, and alternative formulations of impenetrability are also discussed. In particular, in \cite[Section~6.1]{Giacomini-Ponsiglione:2008} a contact condition of the form 
\begin{equation}
\label{eq:CC-intro}[u](x)\cdot \nu_u(x)\geq 0\text{ for $\mathcal{H}^1$-a.e.\ } x\in J_u\end{equation}
is proposed as a linearized counterpart to \eqref{eq:CN-classic} for displacements $u\colon \Omega\to \R^2$, where $J_u$ denotes the jump set of the displacement, $[u]$ its jump opening, and $\nu_u$ its approximate unit normal. We refer also to \cite[Section 5.1]{Francfort-Marigo:1998}. \EEE
A study of quasistatic crack growth under impenetrability constraints has been carried out in the series of works \cite{DalMaso-Lazzaroni:2010, dalmaso.lazzaroni, lazzaroni}. A variational model including both cavitation and fracture has been analyzed in~\cite{henao.moracorral}. Ambrosio-Tortorelli approximations of brittle fracture models under a non-interpenetration constraint are the subject of~\cite{Chambolle-Conti-Francfort:2018}.

The goal of this paper is to provide a rigorous analysis of the connection between the Griffith-counterpart of \eqref{eq:CN-classic} proposed in \cite{Giacomini-Ponsiglione:2008} and the contact condition in \eqref{eq:CC-intro} by performing a  nonlinear-to-linear passage.  
Before discussing our results, we briefly review the literature on linearization for brittle hyperelastic materials. A simultaneous discrete-to-continuum and nonlinear-to-linear study for general crack geometries and for deformations close to the identity is the subject of \cite{FriedrichSchmidt:2014.2}, whereas a linearization analysis for quasistatic evolution models and under additional assumptions on the admissible cracks has been performed in \cite{NegriToader:2013} (see also \cite{Zanini}).
An effective linearized Griffith energy as $\Gamma$-limit of nonlinear and frame indifferent models in the small strain regime and under no assumptions on the crack has been identified in the planar setting in \cite{Friedrich:15-2}, and recently extended in dimension $d\geq 2$ in \cite{higherordergriffith} for the framework of non-simple materials. We refer to Subsection \ref{s:nonlinear} below for a precise description of this latter result. We only mention here that, since no a priori bounds are assumed on the deformations, the function spaces in which the analysis is developed are those of \emph{generalized special functions of bounded variation}, $GSBV$, and \emph{generalized special functions of bounded deformation}, $GSBD$, cf.\ \cite{Ambrosio-Fusco-Pallara:2000, DalMaso:13}.
The topology in which the linearization in \cite{Friedrich:15-2, higherordergriffith} is performed is that of a \emph{tripling of the variable}, in which to every sequence of deformations with equibounded rescaled energies, one associates a sequence of Caccioppoli partitions, corresponding piecewise rigid motions,  and rescaled displacement fields which are defined separately on each  component of the partitions, see Definition \ref{def:conv}. The limiting displacement field obtained by means of this procedure is referred to as the \emph{asymptotic representation} of the sequence of deformations. 

The starting point of our analysis is the linearization result in \cite{higherordergriffith}. The focus of our study is the asymptotic behavior of higher-order Griffith fracture energies under the $GSBV$-version of the impenetrability constraint in \eqref{eq:CN-classic}. Our contribution is  threefold. Our first result is in the negative, for we give an example that, in the linearization process, sequences of deformations with equibounded nonlinear Griffith energies and satisfying a $GSBV$-formulation of \eqref{eq:CN-classic} might lead  to limiting displacements violating \eqref{eq:CC-intro}.  We further show that, in the absence of additional conditions, a linearized version of \eqref{rig-eq: Griffith-base} under \eqref{eq:CC-intro} is not the variational limit of \eqref{rig-eq: Griffith-base} complemented by \eqref{eq:CN-classic}. In particular, our construction suggests that the linearized counterpart of~\eqref{rig-eq: Griffith-base} contains an additional anisotropic surface term being positive when \eqref{eq:CC-intro} is violated, which depends on the orientation and on the amplitude of the jump of the displacement~$u$. This is shown in Examples~\ref{ex} and~\ref{ex:anisotropic}, and motivates the remaining part of our analysis. 
 
 Our second contribution is to \EEE prove that adding further assumptions on the sequence of deformations under consideration and restricting the analysis to ``energy-convergent sequences" leads in fact to a linearized Griffith model constrained by the contact condition  \eqref{eq:CC-intro}. A simplified version of our result reads as follows, we refer to Theorem \ref{thm: main} and Theorem \ref{thm: CNCC2} for the precise statements. 
\begin{theorem}
\label{thm:meta-main1}
Let $(y_\ep)_\ep$ be a sequence of deformations with equibounded Griffith energies, satisfying \eqref{eq:CN-classic}, and such that their nonlinear energies converge to the linearized energy of their asymptotic representation $u$. Then $u$ satisfies \eqref{eq:CC-intro}.
\end{theorem}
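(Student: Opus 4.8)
The plan is to argue by contradiction at the level of the surface energy, localizing around the jump set $J_u$ of the asymptotic representation. Suppose the contact condition \eqref{eq:CC-intro} fails, so that there is a Borel subset $\Sigma\subset J_u$ with $\mathcal{H}^1(\Sigma)>0$ on which $[u]\cdot\nu_u<0$. The heuristic from Examples~\ref{ex} and~\ref{ex:anisotropic} is that violating \eqref{eq:CC-intro} forces the nonlinear deformations $y_\ep$ to create, near $\Sigma$, strictly more crack than the single jump of $u$ accounts for in the linearized energy: the two lips of the approximate crack of $u$ overlap in the reference configuration, so a physically admissible (non-interpenetrating) deformation must open an additional crack somewhere in a neighborhood, or develop an extra fold, and either way the $\mathcal{H}^1$ of the jump set $J_{y_\ep}$ picks up an excess amount bounded below uniformly in $\ep$ in terms of $\mathcal{H}^1(\Sigma)$ and the deficit $-[u]\cdot\nu_u$. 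This excess surface energy is not captured by the linearized limit energy of $u$, contradicting the hypothesis that the nonlinear energies converge \emph{exactly} to the linearized energy of $u$.

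To make this rigorous I would proceed as follows. First, use the convergence setup of Definition~\ref{def:conv}: to $(y_\ep)_\ep$ are associated Caccioppoli partitions, piecewise rigid motions, and rescaled displacements converging to $u$ in the $GSBD^2$ sense. Second, perform a blow-up analysis at $\mathcal{H}^1$-a.e.\ point $x_0\in\Sigma$: rescale $y_\ep$ around $x_0$ at a rate coupled to $\ep$, so that in the blow-up limit one sees, on the reference side, a half-plane jump with normal $\nu_u(x_0)$ and prescribed negative normal opening, and on the deformed side a configuration that must remain non-interpenetrating because the Ciarlet-Ne\v{c}as condition \eqref{eq:CN-classic}, being a global inequality passing to subsets, localizes to $[y(\Omega)]$ controlling volume. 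Third, the key geometric lemma: for a planar $SBV^2$ deformation whose macroscopic behavior near $x_0$ is an affine map with a jump having negative normal component, the Ciarlet-Ne\v{c}as condition forces either an additional jump of quantified length or the determinant to change sign on a set of quantified measure — which is excluded by the positivity constraint built into the admissible class. This yields a lower bound $\liminf_\ep\big(\mathcal{H}^1(J_{y_\ep}\cap B_r(x_0)) - \mathcal{H}^1(J_u\cap B_r(x_0))\big) \geq c\,\mathcal{H}^1(\Sigma\cap B_r(x_0))$ with $c>0$; here I would lean on the piecewise Korn–Poincar\'e inequality in $GSBD^2$ and the $GSBD^2$ slicing/density machinery cited in the excerpt to control the bulk part and to pass from blow-ups back to a genuine global excess. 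Summing (via a Besicovitch/Vitali covering of $\Sigma$) gives a strictly positive defect in the $\liminf$ of the surface part of the nonlinear energy, while the bulk parts converge by frame indifference and growth, so $\liminf_\ep \mathcal{E}(y_\ep)$ strictly exceeds the linearized energy of $u$, contradicting energy convergence.

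The main obstacle I expect is the blow-up step and the accompanying geometric lemma: making precise, at the right scale, that a non-interpenetrating deformation cannot linearize to a negatively-opening jump \emph{without} paying extra surface energy. One has to choose the rescaling so that the deformed configuration remains compact and non-degenerate, rule out the escape of the ``excess'' into the bulk (where it could be invisible in the surface term) by using the positivity of $\det\nabla y_\ep$ together with the $GSBD$ compactness, and handle the anisotropy flagged in Example~\ref{ex:anisotropic}, namely that the quantitative constant $c$ genuinely depends on $\nu_u(x_0)$ and on $[u](x_0)$ and not merely on $\mathcal{H}^1(\Sigma)$ — so the covering argument must be carried out with a density estimate that is uniform on compact subsets where $-[u]\cdot\nu_u$ is bounded away from zero. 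A secondary technical difficulty is the interface between the partition-based convergence and the single displacement $u$: near $x_0$ the relevant jump of $u$ may arise from the boundary between two components of the Caccioppoli partition, and one must track which rigid motion is being subtracted on each side so that the normal opening $[u]\cdot\nu_u$ is correctly identified in the blow-up.
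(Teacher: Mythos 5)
Your overall scaffold is right — argue by contradiction, blow up at a point of $J_u$ where $[u]\cdot\nu_u<0$, and deploy the piecewise Korn--Poincar\'e inequality — and you correctly flag the anisotropic dependence on $\nu_u$ and $[u]$ and the need to track which rigid motion is subtracted on each side. But the contradiction mechanism you propose differs from the paper's and is, as written, harder to close, because your ``key geometric lemma'' is underdetermined and its natural proof route requires energy convergence as an \emph{input}, not only as the target of the contradiction.

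Here is the issue. You want to show: CN plus a macroscopically negative normal opening forces a quantified surface excess, and then contradict energy convergence by summing over a covering of $\Sigma$. But Proposition~\ref{th: kornpoin-sharp} (via Proposition~\ref{prop: Korn}) only yields the clean two-piece decomposition $D^+,D^-$ when the jump in the blow-up square is already controlled by $\mathcal H^1(J_{u_\eps}\cap Q_\rho)\le\rho(1+\eta)$ for small $\eta$ — and that bound is exactly what energy convergence supplies, cf.~\eqref{eq: NC refined2.1}--\eqref{eq: many properties2}. Without it, the blow-up picture can look like Example~\ref{ex}, where the crack along the sequence is twice as long as the limit crack, the piecewise Korn--Poincar\'e decomposition has three or more large pieces, and the ``extra jump or sign change of determinant'' dichotomy is not directly extractable. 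The paper therefore uses energy convergence first to pin down the blow-up (so Proposition~\ref{prop: Korn} applies with essentially one separating curve $\Gamma_\eps$), and then derives the contradiction not against energy convergence but against the Ciarlet--Ne\v{c}as inequality itself: it constructs, at a single blow-up point and scale, disjoint sets $G^\pm_\eps\subset Q_\rho$ contained in the two Korn--Poincar\'e components such that the translates $\eps u^++G^+_\eps$ and $\eps u^-+G^-_\eps$ overlap by a quantified amount (precisely because $(u^+-u^-)\cdot e_1<0$), and checks that $\int_{G^+_\eps\cup G^-_\eps}\det\nabla y_\eps\,dx>\mathcal L^2\big([y_\eps(G^+_\eps\cup G^-_\eps)]\big)$, violating \eqref{e:CN}. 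This is cleaner than your route in two concrete ways: no covering of $\Sigma$ is needed (CN is a single global inequality, so one blow-up point suffices), and one avoids having to prove a quantitative $\liminf$ lower bound on the surface energy — which is essentially the cell-formula-type statement the paper explicitly defers as open. Once you recognize that energy convergence plays this dual role (tool for the blow-up, not just target of the contradiction), your lemma becomes the contrapositive of the paper's Step~3 and the two arguments line up; but as stated, your plan has a genuine gap between the vague dichotomy and a proof.
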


Third, we prove that for each limiting displacement $u$ fulfilling \eqref{eq:CC-intro}, an energy-convergent sequence  satisfying the impenetrability condition \eqref{eq:CN-classic} \EEE can be constructed (see Theorem \ref{th: recovery}).
\begin{theorem}
\label{thm:meta-main2}
Let $u$ satisfy \eqref{eq:CC-intro}. Then, there exists a sequence $(y_\ep)_\ep$ as in Theorem~\ref{thm:meta-main1} having~$u$ as asymptotic representation.
\end{theorem}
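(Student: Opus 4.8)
The plan is to construct the recovery sequence $(y_\ep)_\ep$ in two stages: first produce a displacement-level approximation with enhanced jump-set regularity, and then lift it to a sequence of nonlinear deformations satisfying the Ciarlet-Ne\v{c}as condition \eqref{eq:CN-classic}. For the first stage I would invoke the refined $GSBD^2$-density result alluded to in the abstract: given $u$ satisfying the contact condition $[u]\cdot\nu_u\ge 0$ $\mathcal H^1$-a.e.\ on $J_u$, approximate $u$ in the appropriate ($GSBD^2$, energy) topology by displacements $u_k$ whose jump sets $J_{u_k}$ are finite unions of $C^1$ (or polygonal) arcs, with $[u_k]$, $\nu_{u_k}$ regular up to the relative boundary, and — crucially — such that the contact inequality is preserved, indeed in a strict/quantitative form away from a small set, $[u_k]\cdot\nu_{u_k}\ge 0$ everywhere on $J_{u_k}$. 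Since energy convergence and the asymptotic-representation framework are stable under a diagonal argument, it suffices to build, for each such regularized $u_k$, a recovery sequence $(y_\ep^k)_\ep$ and then diagonalize; so from now on I assume $u$ itself has a nice polygonal jump set and a smooth (say Lipschitz) bulk part.

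For the second stage, I would set $y_\ep := \id + \ep\, u$ away from a neighborhood of $J_u$ and modify it near the crack to decouple the two lips. Concretely, thicken $J_u$ to a strip of width $\delta_\ep\ll\ep$, cut the domain along $J_u$, and define $y_\ep$ on each side by translating/reflecting so that the images of the two lips are separated by a gap of order $\ep\,[u]\cdot\nu_u$ in the normal direction; because $[u]\cdot\nu_u\ge 0$ this opening is nonnegative, so the two deformed lips do not overlap. The tangential component of $[u]$ is harmless for interpenetration and is simply carried along. One then checks: (i) $y_\ep\in SBV^2(\Omega;\R^2)$ with $J_{y_\ep}\subset J_u$ (up to negligible transition layers whose length $\to\mathcal H^1(J_u)$), so $\mathcal H^1(J_{y_\ep})\to\mathcal H^1(J_u)$; (ii) $\nabla y_\ep = \Id + \ep\nabla u + o(\ep)$ uniformly, so by frame indifference and the quadratic expansion $W(\Id+\ep F)=\tfrac{\ep^2}{2}Q(F)+o(\ep^2)$ the rescaled bulk energies $\ep^{-2}\int_\Omega W(\nabla y_\ep)$ converge to $\tfrac12\int_\Omega Q(e(u))$, matching the linearized energy; (iii) the Caccioppoli partition and piecewise rigid motions associated to $y_\ep$ in the sense of Definition~\ref{def:conv} are trivial (a single component, the identity), so $u$ is exactly the asymptotic representation; and (iv) the Ciarlet-Ne\v{c}as condition \eqref{eq:CN-classic} holds — this is where the geometric separation of the lips is used, together with $\det\nabla y_\ep>0$ (true since $\nabla y_\ep$ is an $O(\ep)$-perturbation of $\Id$), giving injectivity a.e.\ and hence $\int_\Omega\det\nabla y_\ep = \mathcal L^2(y_\ep(\Omega)) = \mathcal L^2([y_\ep(\Omega)])$.

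The main obstacle I expect is step (iv): verifying the measure-theoretic Ciarlet-Ne\v{c}as condition of \cite{Giacomini-Ponsiglione:2008} (Definitions~\ref{def: mti} and \ref{def: CN}) rather than classical injectivity, because $y_\ep\in SBV^2$ has no continuous representative and one must control the measure-theoretic image $[y_\ep(\Omega)]$ near the crack tips, near vertices of the polygonal jump set, and along the thin transition strips where the construction interpolates between the cut geometry and $\id+\ep u$. At crack tips the two lips meet, so the normal-opening gap degenerates to zero there and one must ensure no spurious overlap is created by the interpolation; I would handle this by choosing the transition region and the cutoff carefully (e.g.\ opening the lips along the full crack including a small half-disk around each tip, at the price of an extra crack length $o(1)$ which is then reabsorbed in the limit) and by exploiting that, since $\det\nabla y_\ep>0$ and $y_\ep$ is a small perturbation of the identity, local injectivity holds on each piece and global non-interpenetration reduces to the normal-separation estimate. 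A secondary technical point is making the refined density result in the first stage actually preserve the contact condition in the strong pointwise form — this is presumably the content of the "enhanced contact conditions for the approximants" cited in the abstract, which I would invoke as a black box (Theorem~\ref{th: recovery}'s supporting density lemma) and whose proof, via careful local reflection/folding constructions around $J_u$, is the technical heart of the paper.
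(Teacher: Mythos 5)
Your two-stage architecture is essentially the paper's: Lemma~\ref{lemma: contact} first perturbs $u$ by a normal bump on one lip to upgrade \eqref{eq: CC} to a \emph{strict} quantitative opening $[u]\cdot\nu\ge 2\tau$ outside a set of small $\mathcal H^1$-measure, Theorem~\ref{th: density new} then produces $SBV^2$-approximants with finitely many well-separated jump segments that retain $[v_\eps]\cdot\nu\ge\tau$ up to a bad set of measure $O(\theta)$, and the recovery deformation is $y_\eps=\id+\eps w_\eps$ with trivial partition, followed by a diagonal argument. So the skeleton is right, and you correctly identify the enhanced-contact density result as the technical heart.

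The genuine gap is in your step (iv). You argue that non-interpenetration follows "because $[u]\cdot\nu_u\ge0$, so the opening is nonnegative and the lips do not overlap." This is exactly the step that the weak condition $\ge 0$ cannot deliver: where the normal opening vanishes (or is merely nonnegative but small), a purely tangential slip combined with the $O(\eps^\gamma)$ bulk perturbation of the gradient can push one lip across the other on a set of positive measure, and the measure-theoretic Ciarlet--Ne\v cas condition fails. The paper's injectivity proof (Step~2 of the proof of Theorem~\ref{th: recovery}) is a quantitative competition: for two points whose connecting path $\Gamma$ crosses a jump segment, the normal separation gained at the crossing is $\eps\tau/2$, and this must dominate the accumulated gradient error $l_\Gamma\,\eps^\gamma$ along $\Gamma$, which works only because the jump segments are $4\sqrt\eps$-separated (so $l_\Gamma\lesssim\sqrt\eps$ or $\lesssim\mathcal H^1(T^j_\eps)$) and $\gamma>\tfrac23$. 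This requires the \emph{strict} lower bound $\tau>0$, not just nonnegativity; note also that along the diagonal in $\eps$ the gradient bound degenerates to $\|\nabla v_\eps\|_\infty\le\eps^{\gamma-1}$, so your claim that $\nabla y_\eps=\Id+\eps\nabla u+o(\eps)$ uniformly is not available. Moreover, on the residual bad set where only $[v_\eps]\cdot\nu\le\tau$ holds, no such estimate is possible at all: the paper excises thin rectangles around these segments and replaces the deformation there by constants $s^i_\eps$ mapping to pairwise disjoint, far-away regions, at the price of an extra crack length $O(\theta)$ that is reabsorbed in the final diagonalization. Your proposal leaves both the quantitative separation estimate and the treatment of the bad set unaddressed, and these are precisely where the construction can fail.
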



The proof of Theorem \ref{thm:meta-main1} is performed by contradiction: we postulate the existence of sets of positive measure where \eqref{eq:CC-intro} is violated and show that this cannot be the case by means of  a careful blow-up argument, cf.~Proposition~\ref{prop: Korn}, which in turn essentially relies on a piecewise Korn-Poincar\'e inequality, see Proposition \ref{th: kornpoin-sharp}. Since this latter result is currently only available in dimension $d=2$, this is the reason why our analysis is restricted to the planar setting.
The main ingredient for establishing Theorem \ref{thm:meta-main2} is a density result in $GSBD$ (see Theorem \ref{th: crismale-density2}) keeping track of boundary data, which in turn provides approximants of the given displacement satisfying a strengthened version of the contact condition in \eqref{eq:CC-intro}.

We remark  once again that our analysis shows the following: In general, a linear Griffith energy under the contact condition~\eqref{eq:CC-intro} does not provide a linearized counterpart to the nonlinear model~\eqref{rig-eq: Griffith-base} under the Ciarlet-Ne\v{c}as constraint~\eqref{eq:CN-classic}, and convergence of minimizers of the nonlinear model to the linearized one is not ensured. In fact,  as mentioned above, the compactness in $GSBD$ (see~\cite{higherordergriffith} and Definition~\ref{def:conv} below) fails to guarantee ~\eqref{eq:CC-intro} in the linearization process, unless there is convergence of the energies (cf.~Examples~\ref{ex} and~\ref{ex:anisotropic}), which cannot \EEE be proven a priori for sequences of minimizers.  Further, motivated by Example ~\ref{ex:anisotropic}, we conjecture that the effect of adding the Ciarlet-Ne\v{c}as condition~\eqref{eq:CN-classic} to the functional~\eqref{rig-eq: Griffith-base} is given by  the presence   of an additional anisotropic surface term  possibly depending on the orientation and on the amplitude of the jump of limiting displacements. The precise characterization of this surface term goes beyond the scope of this work, for it relies on the identification of a suitable cell-formula for the local limiting energy density around jump points. This will be the subject of a forthcoming analysis.

This paper is organized as follows. Section \ref{sec:prel} collects some preliminary results, basic properties of the spaces $GSBV$ and $GSBD$, as well as Proposition \ref{th: kornpoin-sharp} and  Theorem \ref{th: crismale-density2}. Section \ref{s:results} contains the precise formulation of \eqref{eq:CN-classic} and \eqref{eq:CC-intro}, the definition of nonlinear and linearized energy functionals, the description of our notion of convergence,  Examples \ref{ex} and \ref{ex:anisotropic}, and the statement of our main results. Section \ref{sec:blow} is devoted to the blow-up argument, whereas Sections \ref{sec: proof} and \ref{s:th: recovery} tackle the proofs of Theorems \ref{thm:meta-main1} and \ref{thm:meta-main2}.

\section{Preliminaries and notation}
\label{sec:prel}
 In this section, we   introduce the basic notation and define the function spaces we will use throughout the paper.

\subsection{Basic notation} 
 We denote by $\Omega$ an open bounded subset of~$\R^2$ with Lipschitz boundary~$\partial\Omega$. The symbols~$\mathcal{L}^2$ and~$\mathcal{H}^{1}$ represent the Lebesgue and the $1$-dimensional Hausdorff measure \EEE in~$\mathbb{R}^2$, respectively. We set  $\mathbb{S}^{1} \coloneqq \lbrace x \in \R^2: \, |x|=1\rbrace$. \EEE 
 The identity  map \EEE on $\R^2$ is indicated by $\id$ and its   gradient, \EEE the identity matrix, by $\Id \in \R^{2 \times 2}$.  The  spaces \EEE of symmetric and skew symmetric matrices are denoted by $\R^{2 \times 2}_{\rm sym}$ and $\R^{2 \times 2}_{\rm skew}$, respectively. We set ${\rm sym}( \mathrm{F})  \coloneqq  \frac{1}{2}(\mathrm{F}^T + \mathrm{F}) \EEE $ for $ \mathrm{F} \EEE \in \R^{2\times 2}$ and define $SO(2)  \coloneqq  \lbrace \mathrm{R}\in \R^{2 \times 2}: \mathrm{R}^T \mathrm{R} = \Id, \, \det \mathrm{R}=1 \rbrace$. For every $\mathrm{F} \in \R^{2\times 2}$ we denote by $\dist(\mathrm{F}, SO(2))$ the distance of~$\mathrm{F}$ from the set $SO(2)$. 
 
 For an $\mathcal{L}^2$-measurable set $E\subset\mathbb{R}^2$,  the symbol $\chi_E$ denotes its indicator function.  For two sets $A,B \subset \R^2 \EEE$, we define  $A \triangle B  \coloneqq \EEE (A\setminus B) \cup (B \setminus A)$. By~$B_\rho(x) \subset \R^2$ we denote the open ball  with center $x \in \R^2$ and radius~$\rho$. The symbol~$Q_{\rho}$ stands for the paraxial square \EEE   centered in \EEE the origin and   with \EEE side length $\rho$. 

 A mapping~$a$ of the form $a(x)  = \mathrm{A}\, x + b$ for $\mathrm{A} \in \R^{2 \times 2}_{\rm skew}$ and $b \in \R^2$ is called   an \emph{infinitesimal rigid motion}. \EEE In the next sections, we will make use of the following  elementary lemma on  affine mappings in order to control the norm of   infinitesimal \EEE rigid motions. We refer to~\cite[Lemma~3.4]{FM} or \cite[Lemma~4.3]{Conti-Iurlano:15} for similar statements (The proof relies on \EEE the equivalence of norms in finite dimensions).
		
\begin{lemma}\label{lemma: rigid motion}
Let  $x_0 \in \R^2 \EEE $, and let $R, \theta >0$.   Let $a\colon \R^2 \to \R^2$ be affine  and let $E \subset B_R(x_0) \subset \R^2$ with  $\mathcal{L}^2(E) \ge \theta R^2$.  Then, there exists a constant $\bar{c}_\theta \ge 1$ only depending on  $\theta$  such that 
\begin{align*}
 \Vert a\Vert_{L^\infty(B_R(x_0))}  \le \bar{c}_\theta \Vert  a \Vert_{L^\infty(E)}.
\end{align*}
\end{lemma}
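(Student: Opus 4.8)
**Proof strategy for Lemma 1.1 (control of infinitesimal rigid motions on $B_R(x_0)$ via their values on a subset $E$ of positive density).**

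The plan is to reduce the statement to a compactness/normalization argument on a fixed reference configuration via rescaling, exploiting that the space of affine maps is finite-dimensional. First I would translate and rescale: setting $\hat a(z) := \frac{1}{R}\, a(x_0 + Rz)$ for $z \in B_1(0)$, we see that $\hat a$ is again affine, that $\|a\|_{L^\infty(B_R(x_0))} = R\,\|\hat a\|_{L^\infty(B_1(0))}$, and that $\|a\|_{L^\infty(E)} = R\,\|\hat a\|_{L^\infty(\hat E)}$ where $\hat E := \{ z : x_0 + Rz \in E\} \subset B_1(0)$ satisfies $\mathcal{L}^2(\hat E) = R^{-2}\mathcal{L}^2(E) \ge \theta$. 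Thus it suffices to prove the inequality with $R=1$ and $x_0 = 0$: there exists $\bar c_\theta \ge 1$ such that $\|\hat a\|_{L^\infty(B_1(0))} \le \bar c_\theta \|\hat a\|_{L^\infty(\hat E)}$ for every affine $\hat a$ and every measurable $\hat E \subset B_1(0)$ with $\mathcal{L}^2(\hat E) \ge \theta$.

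For the reduced statement I would argue by contradiction and compactness. Suppose no such constant exists; then there are affine maps $a_n \colon \R^2 \to \R^2$ and sets $E_n \subset B_1(0)$ with $\mathcal{L}^2(E_n) \ge \theta$ such that $\|a_n\|_{L^\infty(B_1(0))} > n\,\|a_n\|_{L^\infty(E_n)}$. By homogeneity we may normalize $\|a_n\|_{L^\infty(B_1(0))} = 1$, so that $\|a_n\|_{L^\infty(E_n)} < 1/n \to 0$. Writing $a_n(x) = \mathrm{A}_n x + b_n$, the bound $\|a_n\|_{L^\infty(B_1(0))}=1$ forces $(\mathrm{A}_n, b_n)$ to be bounded in the finite-dimensional space $\R^{2\times 2}\times \R^2$; passing to a subsequence, $\mathrm{A}_n \to \mathrm{A}$ and $b_n \to b$, hence $a_n \to a$ uniformly on $B_1(0)$ with $a(x) = \mathrm{A}x+b$ and $\|a\|_{L^\infty(B_1(0))} = 1$; in particular $a$ is not identically zero. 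On the other hand, by compactness of the indicator functions $\chi_{E_n}$ in the weak-$*$ topology of $L^\infty(B_1(0))$ (or equivalently weak $L^2$), up to a further subsequence $\chi_{E_n} \weaklystar g$ with $0 \le g \le 1$ and $\int_{B_1(0)} g \,\mathrm{d}x \ge \theta$, so $g>0$ on a set $G$ with $\mathcal{L}^2(G) \ge \theta > 0$. Then for any $\varphi \in L^1(B_1(0))$, $\varphi \ge 0$, we have $\int_{B_1(0)} \chi_{E_n} |a|\,\varphi\,\mathrm{d}x \le \int_{B_1(0)} \chi_{E_n}\big(|a - a_n| + |a_n|\big)\varphi\,\mathrm{d}x \le \|a-a_n\|_{L^\infty(B_1(0))}\|\varphi\|_{L^1} + \|a_n\|_{L^\infty(E_n)}\|\varphi\|_{L^1} \to 0$, while $\int_{B_1(0)}\chi_{E_n}|a|\varphi\,\mathrm{d}x \to \int_{B_1(0)} g\,|a|\,\varphi\,\mathrm{d}x$; choosing $\varphi = \chi_G$ gives $\int_G g\,|a|\,\mathrm{d}x = 0$, hence $|a| = 0$ $\mathcal{L}^2$-a.e.\ on $G$. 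But $a$ is affine and $G$ has positive measure, so an affine map vanishing on a set of positive measure is identically zero (its zero set, if not all of $\R^2$, is contained in a finite union of lines, a null set); this contradicts $\|a\|_{L^\infty(B_1(0))} = 1$, completing the proof.

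The only mildly delicate point — and the step I would treat carefully — is the passage to the limit in the constraint set $E_n$: one must pass through the weak-$*$ limit $g$ of the indicators rather than hoping for a pointwise-a.e.\ limiting set, and combine it with the uniform convergence $a_n \to a$ to conclude that $a$ vanishes on a set of positive measure. Everything else is routine: rescaling, homogeneity normalization, finite-dimensional compactness of affine maps, and the elementary fact that a nonzero affine map has a null zero set. Note also that the constant produced this way depends only on $\theta$ (through the measure lower bound that survives the limit) and, by the normalization step, not on $x_0$ or $R$; and it is $\ge 1$ since $E \subset B_R(x_0)$ already gives $\|a\|_{L^\infty(E)} \le \|a\|_{L^\infty(B_R(x_0))}$.
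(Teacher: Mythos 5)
Your proof is correct and realizes exactly the approach the paper merely gestures at (the paper gives no proof of this lemma, citing the literature and noting only that the argument ``relies on the equivalence of norms in finite dimensions''): rescale to the unit ball, normalize by homogeneity, and use finite-dimensional compactness of affine maps together with the fact that a nonzero affine map has a null zero set. The only remark worth making is that the weak-$*$ limit of the indicators $\chi_{E_n}$ is a valid but heavier device than necessary: since $a_n \to a$ uniformly on $B_1(0)$ and $\Vert a_n\Vert_{L^\infty(E_n)} \to 0$, one gets directly $\Vert a\Vert_{L^\infty(E_n)} \to 0$, hence $\mathcal{L}^2(\lbrace |a|\le \epsilon\rbrace) \ge \theta$ for every $\epsilon>0$, and letting $\epsilon \to 0$ shows $a$ vanishes on a set of measure at least $\theta$, yielding the same contradiction.
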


We   conclude this subsection with \EEE the basic notation for the slicing technique. For $\xi \in \mathbb{S}^1$, we let
\begin{align}\label{eq: slicing-not}
\Pi^\xi \coloneqq \{w\in \R^2\colon w\cdot \xi=0\}\,,
\end{align}
and for any $w\in \R^2$ and $B\subset \R^2$ we let
\begin{align}\label{eq: slicing-not2}
  B^\xi_w \coloneqq \{t\in \R\colon w+t\xi \in B\},\quad \quad \quad  \pi_\xi(B) = \lbrace w \in \Pi^\xi\colon B^\xi_w \neq \emptyset \rbrace\,.
\end{align}

We will use the abbreviation a.e. to indicate that a property holds almost everywhere, namely outside a set of zero measure.

\subsection{Area formula} We recall   below \EEE the area formula for a.e.-approximately differentiable   maps \EEE and refer to \cite[Chapter 3]{GMS} for a complete treatment. For every measurable set $E\subset \Omega$,   every map $y\colon \Omega\to \R^2\EEE$, and every $z\in \R^2$, \EEE let~$m(y, z, E)$ be the number of preimages   via $y$ of $z$ \EEE in the set~$E$, that is, 
$$m(y,z,E) \coloneqq \#\lbrace x\in E\colon y(x) = z\rbrace\,.$$
Let us assume that $y\colon \Omega \to \R^2$ is a.e.-approximately differentiable in $\Omega$ and let~$\Omega_{d} \subseteq \Omega$ be the set of approximate differentiability of~$y$. Then, the area formula (see e.g.\  \cite[Theorem~1, Section~1.5, Chapter~3]{GMS}) states that for every measurable set $E\subset \Omega$ the function   $z \mapsto m(y,z,E\cap \Omega_d)$ \EEE is measurable and
\begin{align}\label{eq:area}
\int_E |\det \nabla y(x)| \, {\rm d}x=\int_{\R^2}  m(y,z,E\cap \Omega_d) \, {\rm d}z\,. 
\end{align}

\subsection{Sets of finite perimeter}
 For a set of finite perimeter $E$, we denote by $\partial^* E$ its essential boundary and by  $(E)^1$ the points where $E$ has density one, see \cite[Definition 3.60]{Ambrosio-Fusco-Pallara:2000}. A set of finite perimeter $E$ is called \emph{indecomposable} if it cannot be written as   $E_\alpha \cup E_\beta$ with $E_\alpha \cap E_\beta = \emptyset$, $\mathcal{L}^{ 2 \EEE}(E_\alpha), \mathcal{L}^{ 2 \EEE}(E_\beta) >0$, and $\mathcal{H}^{ 1 \EEE}(\partial^* E) = \mathcal{H}^{ 1 \EEE}(\partial^* E_\alpha) + \mathcal{H}^{ 1 \EEE}(\partial^* E_\beta)$. \EEE  Note  that this notion generalizes the concept of  connectedness to sets of finite perimeter. By \cite[Theorem 1]{Ambrosio-Morel}  for each set of finite perimeter $E$  there exists a unique finite or countable family of pairwise disjoint indecomposable sets $( E_i)_i$ such that   $E=\bigcup_{i}E_i$, $\mathcal{L}^2(E_i)>0$ for every $i$, and \EEE $\mathcal{H}^{ 1 \EEE}(\partial^* E) = \sum_i \mathcal{H}^{ 1 \EEE}(\partial^* E_i)$. The sets~$(E_i)_i$ are called the \emph{connected components} of $E$. We call $E$ \emph{simple} if both $E$ and $\R^2\setminus E$ are indecomposable.  For an indecomposable set $E$ we define the  \emph{saturation} ${\rm sat}(E)$ of $E$ as the union of $E$ and its `holes', i.e., the connected components of $\R^2 \setminus E$ with finite  measure, see \cite[Definition~2]{Ambrosio-Morel}. In a similar fashion, for general sets of finite perimeter $E$ with connected components  $ ( E_i)_i \EEE$, we define ${\rm sat}(E) = \bigcup_i {\rm sat}(E_i)$.

We also recall the  structure theorem of the boundary of planar sets   $E$ \EEE of finite perimeter in  \cite[Corollary 1]{Ambrosio-Morel}: there exists a unique countable decomposition  of $\partial^* E$ into pairwise disjoint rectifiable Jordan curves. We \EEE say that $\Gamma \subset \R^2$ is  a rectifiable Jordan curve if $\Gamma = \gamma([a,b])$ for some $a < b$ and some  Lipschitz continuous map $\gamma$, one-to-one on $[a,b)$, and such that  $\gamma(a) = \gamma(b)­$.  According to the  Jordan curve  theorem, any Jordan curve splits $\R^2 \setminus \Gamma$ into exactly one bounded and one unbounded component, denoted by ${\rm int}(\Gamma)$ and ${\rm ext}(\Gamma)$, respectively, where ${\rm int}(\Gamma)$ denotes the bounded component. 

For the definition and properties of Caccioppoli partitions we refer to \cite[Section~4.4]{Ambrosio-Fusco-Pallara:2000}.

\subsection{Function spaces}\label{s:function-spaces}

 We use   the \EEE standard notation $GSBV(\Omega; \R^{2})$ for the space of {\em generalized special functions of bounded variation}, see \cite[Section 4]{Ambrosio-Fusco-Pallara:2000} and \cite[Section 2]{DalMaso-Francfort-Toader:2005}. We recall that a function~$y \in GSBV(\Omega; \R^{2})$ admits an approximate gradient~$\nabla y$ a.e.~in~$\Omega$. We denote by~$J_{y}$ the set of approximate jump   points \EEE of~$y \in GSBV(\Omega;\R^{2})$, that is, the set of   points \EEE $x \in \Omega$   for which \EEE there exist $\nu \in \mathbb{S}^{1}$ and $a, b \in \R^{2}$ such that $a \neq b$ and
\begin{equation}\label{e:GSBV-jump}
\aplim_{\substack{z \to x\\ (z - x) \cdot \nu >0}} \, y(z) = a \qquad \text{and} \qquad \aplim_{\substack{z \to x\\ (z - x) \cdot \nu <0}} \, y(z) = b\,,
\end{equation}
where the symbol $\aplim$ denotes the approximate limit. We recall that~$J_{y}$ is an $\mathcal{H}^{1}$-rectifiable set, \EEE and that the triple~$(a, b, \nu)$ is uniquely defined, up to a permutation of $a$ and~$b$ and a change of sign of~$\nu$. In particular,~$\nu$ is the approximate unit normal to~$J_{y}$ and we denote it by~$\nu_{y}$ from now on. The approximate limits~$a$ and~$b$ at $x \in J_{y}$ are indicated by~$y^{+}_{x}$ and~$y^{-}_{x}$.

We further set
\begin{align}\label{eq: space2}
GSBV^2(\Omega;\R^2) = \lbrace y \in GSBV(\Omega;\R^2): \ \nabla y \in L^2(\Omega;\R^{2\times  2}), \ \mathcal{H}^{1}(J_y) < + \infty \rbrace.
\end{align}
 We define the space
\begin{align}\label{eq: space}
GSBV^2_2(\Omega;\R^2) := \big\{ y \in GSBV^2(\Omega; \R^2): \ \nabla y \in GSBV^2(\Omega;\R^{2\times 2})\big\}.
\end{align}
The approximate differential and the jump set of $\nabla y$ will be denoted by $\nabla^2 y$ and $J_{\nabla y}$, respectively.  (To avoid confusion, we point out that in~\cite{DalMaso-Francfort-Toader:2005} the notation $GSBV^2_2(\Omega;\R^2)$ was used for $GSBV^2(\Omega;\R^2) \cap L^2(\Omega;\R^2)$.)  

We notice that spaces similar to~\eqref{eq: space} already appeared, for instance, in~\cite{Carriero1, Carriero2} to treat second order free discontinuity functionals, e.g., a weak formulation of the Blake $\&$ Zissermann model~\cite{BZ} of image segmentation. Since a function in~$GSBV^{2}_{2} (\Omega; \R^{2})$ is allowed to exhibit discontinuities, our analysis is outside of the framework of the space of special functions with bounded Hessian~$SBH(\Omega)$, considered for second order energies for elastic-perfectly plastic plates (see, e.g.,~\cite{Carriero3}). \EEE

In order to treat linear models of fracture, we need the space $GSBD(\Omega)$ of {\em generalized special functions of bounded deformation}, introduced in~\cite{DalMaso:13}. We recall that a function~$u \in GSBD(\Omega)$ admits an approximate symmetric gradient $e(u) \in L^{1}(\Omega; \R^{2 \times 2}_{\rm sym})$ and its jump set $J_{u}$, defined similarly to~\eqref{e:GSBV-jump}, is $\mathcal{H}^{1}$-rectifiable, so that the approximate unit normal~$\nu_{u}$ to~$J_{u}$ is defined $\mathcal{H}^{1}$-a.e.~on~$J_{u}$ together with the approximate limits $u^{+}_{x}$ and~$u^{-}_{x}$, $x \in J_{u}$. As usual, we set $[u] \coloneqq u^{+} - u^{-}$ as the jump of~$u$ through~$J_{u}$. We further let
\begin{displaymath}
GSBD^{2}(\Omega) \coloneqq \{ u \in GSBD(\Omega) \colon \, e(u) \in L^{2}(\Omega; \R^{2 \times 2}_{\rm sym}) , \, \mathcal{H}^{1}(J_{u}) <+\infty\}\,.
\end{displaymath}

We conclude this section by recalling two technical results concerning   $GSBD^2$\EEE-functions. The first one is a  \emph{piecewise Korn-Poincar\'e inequality} \cite{Solombrino}.

\begin{proposition}[Piecewise Korn-Poincar\'e inequality]\label{th: kornpoin-sharp}
Let $Q \subset \R^2$ be an open square and  let  $0 < \theta \le \theta_0$ for some $\theta_0$ sufficiently small. Then, there   exists \EEE some $C_{\theta}=C_{\theta}(\theta) \ge 1$   such that the following holds: for  each $u \in GSBD^2(Q)$ we find a (finite) Caccioppoli partition $Q = R \cup \bigcup^{J}_{j=1} P_j$, and corresponding rigid motions $(a_j)_{j=1}^J$ such that  
\begin{subequations}
\label{eq: kornpoinsharp2XXX}
\begin{align}
& \sum_{j=1}^{J}\mathcal{H}^1\big( (\partial^* P_j \cap Q) \setminus J_u \big) +\mathcal{H}^1\big( (\partial^* R \cap Q )\setminus J_u  \big) \le \theta (\mathcal{H}^1(J_u) + \mathcal{H}^1(\partial Q)),\label{eq: kornpoinsharp2XXX-1}\\
 & \vphantom{\sum_{i=1}^{J}} \mathcal{L}^2(R)   \le \theta (\mathcal{H}^1(J_u)+ \mathcal{H}^1(\partial Q))^2, \ \ \ \ \ \  \mathcal{L}^2(P_j) \ge \mathcal{L}^2(Q)\theta^3  \ \ \ \text{for all $j=1,\ldots,J$},     \label{eq: kornpoinsharp2XXX-2}\\
& \vphantom{\sum_{i=1}^{J}} \Vert u - a_j \Vert_{L^\infty(P_j)}  \le C_{\theta} \Vert  e(u) \Vert_{L^2( Q \EEE )} \ \ \ \quad  \text{for all $j=1,\ldots,J$}.\label{eq: kornpoinsharp2XXX-3}
\end{align}
\end{subequations}
\end{proposition}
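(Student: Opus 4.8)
\textbf{Proof proposal for Proposition \ref{th: kornpoin-sharp}.}

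The plan is to reduce the statement to the piecewise Korn-type inequality established by Friedrich--Solombrino (the result referenced as \cite{Solombrino}) and then to post-process the resulting partition so that all four quantitative bounds \eqref{eq: kornpoinsharp2XXX-1}--\eqref{eq: kornpoinsharp2XXX-3} hold simultaneously for a single threshold $\theta$. First I would invoke the known result: for $\theta$ small there is a Caccioppoli partition $Q=\bigcup_j P_j$ with associated rigid motions $a_j$ such that the interior partition boundary not already contained in $J_u$ is controlled by $\theta(\mathcal{H}^1(J_u)+\mathcal{H}^1(\partial Q))$, and such that $u-a_j$ is controlled in $L^\infty(P_j)$ (or in $L^2$, upgraded via the affine structure of $a_j$ and Lemma \ref{lemma: rigid motion}) by $C_\theta\Vert e(u)\Vert_{L^2(Q)}$. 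This gives \eqref{eq: kornpoinsharp2XXX-1} and \eqref{eq: kornpoinsharp2XXX-3} directly; the point that requires work is to additionally arrange the volume bounds in \eqref{eq: kornpoinsharp2XXX-2}, i.e.\ that a single ``bad'' set $R$ absorbs all small components and has quadratically small measure, while every retained $P_j$ has measure at least $\theta^3\mathcal{L}^2(Q)$.

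The key step is therefore a \emph{merging/relabelling} procedure. Order the components $(P_j)_j$ of the partition furnished by \cite{Solombrino} by decreasing measure, collect into $R$ every component with $\mathcal{L}^2(P_j)<\theta^3\mathcal{L}^2(Q)$, and keep the remaining ones as the new $P_1,\dots,P_J$; note $J$ is then automatically finite, bounded by $\theta^{-3}$. To estimate $\mathcal{L}^2(R)$ I would use the isoperimetric inequality component by component: each discarded $P_j$ satisfies $\mathcal{L}^2(P_j)\le C\,\mathcal{H}^1(\partial^* P_j)^2$, and since it is small its essential boundary cannot lie mostly inside $Q$ interior unless it is paid for in the $\theta$-budget of \eqref{eq: kornpoinsharp2XXX-1}; combining $\sum_j \mathcal{H}^1(\partial^* P_j\cap Q)\le \theta(\mathcal{H}^1(J_u)+\mathcal{H}^1(\partial Q))$ with the fact that the portion of $\partial^* P_j$ on $\partial Q$ contributes a square of side at most $\theta^{3/2}\mathcal{L}^2(Q)^{1/2}$ yields, after a short summation argument, $\mathcal{L}^2(R)\le \theta(\mathcal{H}^1(J_u)+\mathcal{H}^1(\partial Q))^2$ up to renaming $\theta$. (One replaces $\theta$ by a fixed power of itself throughout and shrinks $\theta_0$ accordingly so that every inequality reads with the same symbol $\theta$; this is harmless since all the constants $C_\theta$ are allowed to blow up as $\theta\to 0$.) Finally, $R$ together with the $P_j$'s still forms a Caccioppoli partition of $Q$, and for the retained pieces the $L^\infty$-bound \eqref{eq: kornpoinsharp2XXX-3} is inherited unchanged.

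The main obstacle I anticipate is purely bookkeeping rather than conceptual: making sure that after the merging the boundary estimate \eqref{eq: kornpoinsharp2XXX-1} survives. Merging small components into one set $R$ can only decrease the interior interface (interfaces between two merged pieces disappear), so \eqref{eq: kornpoinsharp2XXX-1} for $R$ follows from subadditivity of $\mathcal{H}^1\llcorner\partial^*$ under union of sets of finite perimeter together with the original bound; one must, however, be slightly careful that $\partial^* R\cap Q$ is not larger than $\sum_{\text{discarded }j}\mathcal{H}^1(\partial^* P_j\cap Q)$, which holds because $\partial^*(\bigcup E_i)\subseteq\bigcup\partial^* E_i$ up to $\mathcal{H}^1$-null sets for disjoint sets of finite perimeter. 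Once this is checked, assembling \eqref{eq: kornpoinsharp2XXX-1}--\eqref{eq: kornpoinsharp2XXX-3} and the finiteness of $J$ is immediate, and the proposition follows with $C_\theta$ the constant from \cite{Solombrino} (possibly enlarged by the norm-equivalence constant $\bar c_\theta$ of Lemma \ref{lemma: rigid motion} when passing between the $L^\infty$-bound on the original components and on the retained ones).
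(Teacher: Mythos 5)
Your overall route --- reduce to the piecewise Korn inequality of Friedrich--Solombrino \cite{Solombrino} --- is the same as the paper's, but you have mis-read what that theorem delivers, and the extra work you add to compensate contains a genuine error. In the paper, \cite[Theorem 4.1]{Solombrino} already produces the exceptional set (the component $P_0$, renamed $R$ here) together with \emph{both} volume bounds of \eqref{eq: kornpoinsharp2XXX-2}: the upper bound on $\mathcal{L}^2(P_0)$ is \cite[(17)(i)]{Solombrino} and the lower bound on $\mathcal{L}^2(P_j)$, $j\ge 1$, is \cite[(18)(ii)]{Solombrino}. Consequently the paper's proof consists only of applying that theorem with $\theta/c$ in place of $\theta$ (so that all estimates read with the same symbol $\theta$) and of a rescaling argument for squares other than the unit square; no merging of components is required. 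Your proposal omits the rescaling step but, more importantly, replaces the citation of the volume bounds by a merging procedure that, as written, does not close.

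The gap is in the estimate of $\mathcal{L}^2(R)$. You invoke the bound $\sum_j\mathcal{H}^1(\partial^* P_j\cap Q)\le\theta(\mathcal{H}^1(J_u)+\mathcal{H}^1(\partial Q))$, but the piecewise Korn inequality only controls the part of the partition boundary \emph{not} contained in $J_u$, exactly as in \eqref{eq: kornpoinsharp2XXX-1}; the part lying on $J_u$ is only of order $\mathcal{H}^1(J_u)$, with no factor $\theta$. A discarded component may be a thin set whose boundary lies almost entirely on $J_u$, so the ``$\theta$-budget'' you appeal to is simply not available, and the isoperimetric summation as you describe it yields at best $\mathcal{L}^2(R)\le C(\mathcal{H}^1(J_u)+\mathcal{H}^1(\partial Q))^2$ without the crucial factor $\theta$. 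The strategy can be repaired: each discarded component satisfies $\mathcal{L}^2(P_j)\le\theta^3\mathcal{L}^2(Q)$, hence by isoperimetry $\mathcal{H}^1(\partial^* P_j)\ge c\,\mathcal{L}^2(P_j)\,\theta^{-3/2}\mathcal{L}^2(Q)^{-1/2}$; summing, and using that the \emph{total} perimeter of the partition is bounded by $C(\mathcal{H}^1(J_u)+\mathcal{H}^1(\partial Q))$ (each point of $J_u$ lies on at most two essential boundaries), one gets $\mathcal{L}^2(R)\le C\theta^{3/2}\mathcal{L}^2(Q)^{1/2}(\mathcal{H}^1(J_u)+\mathcal{H}^1(\partial Q))\le C\theta^{3/2}(\mathcal{H}^1(J_u)+\mathcal{H}^1(\partial Q))^2$ since $\mathcal{L}^2(Q)^{1/2}=\tfrac14\mathcal{H}^1(\partial Q)$, and $\theta^{3/2}$ beats $\theta$ for $\theta$ small. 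But this is not the argument you wrote, and in any case it is unnecessary, since the cited theorem already contains the dichotomy you are trying to manufacture.
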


\begin{proof}
The statement is a slightly simplified version of \cite[Theorem 4.1]{Solombrino}.  We briefly explain how the result  can be obtained therefrom. We first suppose that $Q$ is the unit square. \EEE   We define $\theta_0 \le 1/c$, where $c$ is the constant from  \cite[Theorem 4.1]{Solombrino} and apply \cite[Theorem 4.1]{Solombrino} for $\theta/c$ in place of $\theta$.   Then, \eqref{eq: kornpoinsharp2XXX-1} follows from \cite[(18)(i)]{Solombrino}, where we denote the  component $P_0$ by $R$. Item \eqref{eq: kornpoinsharp2XXX-2} follows from \cite[(17)(i), (18)(ii)]{Solombrino}, choosing $\theta_0$ sufficiently small such that $C_\Omega \ge  \theta_0$. Finally, \eqref{eq: kornpoinsharp2XXX-3} follows from  \cite[(18)(iii)]{Solombrino}, where also a corresponding Korn-type estimate has been proved.  Eventually, if $Q$ is not the  unit \EEE square, the result follows by a standard rescaling argument, see \cite[Remark 4.2]{Solombrino}. 
\end{proof}

In the next sections (see in particular Theorem~\ref{th: recovery}) we will also deal with boundary conditions. As usual in $BV$ and $BD$-like spaces, we impose a Dirichlet boundary condition by forcing a displacement $u \in GSBD(\Omega)$ to take a prescribed value on the set $\Omega' \setminus \overline{\Omega}$, where $\Omega'$ is an open bounded subset of~$ \R^{2}$ with Lipschitz boundary~$\partial\Omega'$ such that $\Omega \subseteq \Omega'$. Precisely, for a boundary   datum  $h \in W^{2,\infty}(\Omega';\R^2)$ we introduce the space
\begin{align}\label{eq: boundary-space}
GSBD^2_h (\Omega') & \coloneqq  \lbrace u \in GSBD^2(\Omega')\colon \, u = h \text{ on } \Omega' \setminus \overline{\Omega} \rbrace\,.
\end{align}

In what follows, we will make   a \EEE geometrical assumption on the Dirichlet part of the boundary $\partial_{D} \Omega \coloneqq \Omega' \cap \partial\Omega$, which will allow us to exploit a density result in $GSBD^{2}_{h}(\Omega')$ (see~Theorem~\ref{th: crismale-density2}). Precisely, we assume that there exists a decomposition $\partial \Omega = \partial_D \Omega \cup \partial_N\Omega \cup N$ with 
\begin{align}\label{eq: density-condition2}
\partial_D \Omega, \partial_N\Omega \text{ relatively open}, \ \  \  \mathcal{H}^{d-1}(N) = 0, \ \ \  \partial_D\Omega \cap \partial_N \Omega = \emptyset, \ \ \  \partial (\partial_D \Omega) = \partial (\partial_N \Omega),
\end{align}
and there exist $\bar{\delta}>0$ small and $x_0 \in\R^d$ such that for all $\delta \in (0,\bar{\delta})$ there  holds  
\begin{align}\label{eq: density-condition}
O_{\delta,x_0}(\partial_D \Omega  ) \subset \Omega,
\end{align}
where $O_{\delta,x_0}(x) := x_0 + (1-\delta)(x-x_0)$. 

We conclude this section with the statement of a density result in $GSBD^{2}_{h}(\Omega')$. To   shorten \EEE the notation, we introduce the space $\mathcal{W}(\Omega;\R^2)$ of all functions $u \in SBV(\Omega;\R^2)$ such that $J_u$ is a finite union of disjoint segments and $u \in W^{k,\infty}(\Omega \setminus J_u; \R^2)$ for every $k \in \N$. The following   theorem \EEE is essentially  a consequence of results in  \cite{Crismale2} and   \cite{Cortesani-Toader:1999}. The exact statement can be found in \cite[Theorem 3.6]{higherordergriffith}.

\EEE 

\begin{theorem}[Density with boundary data]\label{th: crismale-density2}
Let $\Omega \subset \Omega' \subset  \R^2$  be bounded Lipschitz domains satisfying \eqref{eq: density-condition2}--\eqref{eq: density-condition}.  Let $h \in W^{r,\infty}(\Omega')$ for $r \in \N$  and let $u \in GSBD^2_h(\Omega')$. Then, \EEE there exists a sequence of functions $(u_n)_n$ in $SBV^2(\Omega; \R^2)$, a sequence of neighborhoods $(U_n)_n$ of $\Omega' \setminus \Omega$, and a sequence of neighborhoods $(\Omega_n)_n$ of $\Omega \setminus U_n$  such that $U_{n} \subset \Omega'$, $\Omega_{n} \subset \Omega$,  $u_n =  h \EEE$ on $\Omega' \setminus \overline{\Omega}$, $u_n|_{U_n} \in W^{r,\infty}(U_n;\R^2)$, and $u_n|_{\Omega_n} \in \mathcal{W}(\Omega_n;\R^2)$, and  the following properties hold:
\begin{subequations}
\label{eq: dense-boundary}
\begin{align}
&   u_n \to  u  \text{ in measure on } \Omega', \label{eq: dense-boundary-1}\\
&  \lim_{n \to \infty} \, \Vert e(u_n) - e(u) \Vert_{L^2(\Omega')} = 0, \label{eq: dense-boundary-2}\\
&    \lim_{n \to \infty} \, \mathcal{H}^{1}(J_{u_n}) = \mathcal{H}^{1}(J_u).\EEE  \label{eq: dense-boundary-3}
\end{align}
\end{subequations}
In particular, $u_n \in W^{r,\infty}(\Omega \setminus J_{u_n};\R^2)$   for every $n\in \N$. \EEE
\end{theorem}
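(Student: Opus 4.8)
The statement coincides with \cite[Theorem~3.6]{higherordergriffith}, and the strategy I would follow combines the interior density result for $GSBD^2$-functions of \cite{Crismale2} with the device for retaining Dirichlet data going back to \cite{Cortesani-Toader:1999}. The hypotheses \eqref{eq: density-condition2}--\eqref{eq: density-condition} enter only to confine the boundary datum to a collar around $\partial_D\Omega$ that can be decoupled from the jump set, so that the interior approximation can be carried out without interference from the constraint on $\Omega' \setminus \overline\Omega$.

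\emph{Step 1: regularization near $\partial_D\Omega$ by inward dilation.} First I would extend $h$ to a map in $W^{r,\infty}(\R^2;\R^2)$ and set $v := u-h \in GSBD^2(\Omega')$, which vanishes on $\Omega' \setminus \overline\Omega$. For $\delta \in (0,\bar\delta)$ I define $v^\delta := v \circ O_{\delta,x_0}^{-1}$ and $u^\delta := v^\delta + h$. By \eqref{eq: density-condition}, $v^\delta$ vanishes on a neighbourhood $V_\delta$ of $\partial_D\Omega \cup (\Omega' \setminus \Omega)$; hence $u^\delta \equiv h$ on $V_\delta$, in particular $u^\delta = h$ on $\Omega' \setminus \overline\Omega$, and $J_{u^\delta}$, being the $O_{\delta,x_0}$-image of $J_u$, stays bounded away from $\partial_D\Omega$. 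Using the continuity of dilations in $L^2$ and in measure, together with the transformation rules for the symmetrized gradient and for $\mathcal{H}^1$ under $O_{\delta,x_0}$, one checks that $u^\delta \to u$ in measure on $\Omega'$, $\Vert e(u^\delta)-e(u)\Vert_{L^2(\Omega';\R^{2\times 2}_{\rm sym})} \to 0$, and $\mathcal{H}^1(J_{u^\delta}) \to \mathcal{H}^1(J_u)$ as $\delta \to 0$. These discrepancies will be reabsorbed at the end, so I may henceforth argue as if $u = u^\delta$ is already of class $W^{r,\infty}$ on the collar $V_\delta$.

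\emph{Step 2: interior density with a frozen collar, and conclusion.} On $\Omega'$ I would then invoke the $GSBD^2$-density theorem of \cite{Crismale2} in the refined form, originating in \cite{Cortesani-Toader:1999}, in which the approximants may be chosen to coincide with the given function on a prescribed portion of the boundary where the latter is already smooth. Since $u^\delta$ is smooth on $V_\delta$, this produces $u_n$ equal to $u^\delta$ --- hence to $h$ --- on a neighbourhood $U_n \subset V_\delta$ of $\Omega' \setminus \Omega$, of class $\mathcal{W}$ on a neighbourhood $\Omega_n \subset \Omega$ of $\Omega \setminus U_n$, and satisfying $u_n \to u^\delta$ in measure, $\Vert e(u_n)-e(u^\delta)\Vert_{L^2(\Omega')} \to 0$, and $\mathcal{H}^1(J_{u_n}) \to \mathcal{H}^1(J_{u^\delta})$. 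The remaining assertions are then immediate: on $U_n$ one has $u_n = h \in W^{r,\infty}$; $\mathcal{W}$-maps on bounded sets, being bounded and $W^{k,\infty}$ off a finite union of segments, belong to $SBV^2$, so $u_n \in SBV^2(\Omega;\R^2)$; and patching the two local descriptions yields $u_n \in W^{r,\infty}(\Omega \setminus J_{u_n};\R^2)$. Finally, carrying out Step~2 for every $\delta \in (0,\bar\delta)$ and combining the resulting convergences with those of Step~1 by a diagonal extraction gives a single sequence fulfilling \eqref{eq: dense-boundary} together with all the structural properties.

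The delicate point is Step~2, and specifically the interplay between (i) keeping the Dirichlet datum $h$ and the $W^{r,\infty}$-regularity exactly on the collar --- which is precisely why the geometric assumption \eqref{eq: density-condition} is needed and why the preliminary dilation of Step~1 cannot be dispensed with --- and (ii) performing the $GSBD^2$-approximation in the interior so that no spurious $\mathcal{H}^1$-mass is created along the interface between the regular collar and the $\mathcal{W}$-core, i.e.\ so that $u_n$ is genuinely of class $\mathcal{W}$ on a whole neighbourhood $\Omega_n$ of $\Omega \setminus U_n$. This boundary-adapted form of the density statement is the substantive input, supplied in $SBV^2$ by \cite{Cortesani-Toader:1999} and in $GSBD^2$ by \cite{Crismale2}; a further subtlety, intrinsic to $GSBD$, is that no a priori control on $u$ itself is available (only on $e(u)$ and $\mathcal{H}^1(J_u)$), so all estimates and the diagonalization must be organized around convergence in measure of the functions and strong $L^2$-convergence of the symmetrized gradients alone.
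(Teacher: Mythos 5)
The paper does not actually prove this theorem: it states it as a direct quotation of \cite[Theorem 3.6]{higherordergriffith}, which is in turn obtained from the density results of \cite{Crismale2} and \cite{Cortesani-Toader:1999} by precisely the two-step scheme you describe (inward dilation via $O_{\delta,x_0}$ exploiting \eqref{eq: density-condition} to freeze a smooth collar where $u=h$, then boundary-adapted interior approximation, then a diagonal extraction). Your proposal is therefore correct and follows essentially the same route as the source on which the paper relies.
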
  

\EEE

\section{Setting and main results}\label{s:results}

\subsection{Ciarlet-Ne\v{c}as and contact conditions\EEE}

 The  aim of this paper is to characterize \EEE the relation between Ciarlet-Ne\v{c}as and contact conditions in the passage from nonlinear to linearized brittle fracture. 
Following~\cite[Section~2]{Giacomini-Ponsiglione:2008}, we   first \EEE give a precise meaning to the Ciarlet-Ne\v{c}as non-interpenetration condition,   see~\cite{Ciarlet-Necas:1987}. \EEE To do this, we recall the definition of measure theoretic image of $GSBV$-functions. 

\begin{definition}[Measure theoretical image]\label{def: mti}
{\normalfont
Let $y \in GSBV(\Omega;\R^2)$ and let $ \Omega_{d}  \subseteq \EEE \Omega$ be the set of points where  $y$ \EEE is approximate differentiable. We define  $y_d$  by
$$ y_d (x)  \coloneqq  \begin{cases}  \tilde{y}(x)       & \text{ for } x \in  \Omega_d, \\  0 & \text{ else,}  \end{cases} $$
where $\tilde{y}(x)$ denotes the Lebesgue value of $y$ at $x \in  \Omega_d$. \EEE Given a measurable set $E  \subseteq \EEE \Omega$, we say that  $y_d(E)$ \EEE is the measure theoretic image  of $E$ under the map  $y$, \EEE and we denote it by $[y(E)]$.\\
}
\end{definition}

\begin{definition}[Ciarlet-Ne\v{c}as \EEE non-interpenetration condition for $GSBV$-maps]
\label{def: CN}
{\normalfont
We say that $y \in GSBV(\Omega;\R^2)$ satisfies  the  {\em  Ciarlet-Ne\v{c}as \EEE  non-interpenetration  condition}  if $\det \nabla y(x)>0$ for a.e.\ $x\in\Omega$ and
\begin{equation}
\label{e:CN}
\tag{{\rm CN}}
 \int_\Omega \det \nabla y \, {\rm d}x  \le \mathcal{L}^{ 2 } ([y(\Omega)])\,,
 \end{equation}
where $[y(\Omega)]$ denotes the image of $\Omega$ under $y$ according to Definition \ref{def: mti}.
}
\end{definition}

\begin{rem}
{\normalfont
We recall that~\eqref{e:CN} is equivalent to a.e.~injectivity under the assumption~$\det \nabla{y} (x) > 0$ for a.e.~$x \in \Omega$, see~\cite[Proposition~2.5]{Giacomini-Ponsiglione:2008}. Here, we say that $y$ is a.e.-injective if for every representative $\bar{y}$ of $y$  there exists an $\mathcal{L}^2$-negligible set $E \subset \Omega$ such that the restriction of $\bar{y}$ on $\Omega \setminus E$ is injective.
}
\end{rem}
\EEE

We now define the linearized contact condition for functions in $GSBD^{2}(\Omega)$.
\EEE

\begin{definition}[Contact condition for $GSBD$-maps]
{\normalfont
We say that $u \in GSBD^2(\Omega)$ satisfies the {\em contact condition} if  

\begin{equation}
\label{eq: CC}
\tag{{\rm CC}}
\nu_u(x) \cdot [u](x) \ge 0  \quad \text{for $\mathcal{H}^1$-a.e.\ $x \in J_u$}.
\end{equation}
\EEE
}
\end{definition}

Our first observation is the following: consider \EEE a sequence of deformations $(y_\ep)_\ep\subset GSBV^2(\Omega;\R^2)$ satisfying \eqref{e:CN} such that \EEE  their associated rescaled displacements $(u_\ep)_\ep$, defined as
\begin{align}\label{eq: rescalidipl}
u_\eps \coloneqq \frac{1}{\eps} (y_\eps - \id)\,, 
\end{align} 
have uniformly bounded linearized energies, i.e., 
\begin{equation}
\label{eq:unif-en}
\sup_{\eps >0}  \, \mathcal{F}(u_\eps)  <+\infty, \quad \quad \text{ where } \mathcal{F}(u) :=  \Vert e(u_\eps) \Vert^2_{L^2(\Omega)} + \mathcal{H}^{1}(J_{u_\eps})\,.
\end{equation} \EEE
\YYY This \EEE is not enough to guarantee measure convergence of $u_\eps$ \EEE to a displacement $u$ satisfying \eqref{eq: CC}.\EEE

\begin{example}\label{ex}
{\normalfont
Let $\Omega = (-1,1)^2$ and let $u =  (-1,0)\chi_{\lbrace x_1 >0\rbrace}$. Clearly, we have $J_u = \lbrace 0 \rbrace \times (-1,1)$, $\nu_u = e_1$, and $[u] = -e_1$. Therefore, we have $[u] \cdot \nu_u  = -1 <0$ on $J_u$. We now construct a sequence $(y_\eps)_\eps \subset GSBV^2(\Omega;\R^2)$ satisfying  \eqref{eq:unif-en},   as well as \eqref{e:CN}, \EEE  and \EEE such that $u_\eps \to u$ in measure on $\Omega$, where   $u_\eps \in GSBV^2(\Omega;\R^2)$ is defined in \eqref{eq: rescalidipl}. To this end, we let  (see also Figure~\ref{f:1})
\begin{equation}
\label{e:def-example-1}
u_\eps :=    (-1,0)\chi_{\lbrace x_1 >0\rbrace }     +  \left(\frac{2}{\eps},0 \right) \chi_{\lbrace-{2\eps}< x_1 <0\rbrace}\,, \qquad y_\eps = \id + \eps u_\eps\,.
\end{equation}
Then, we see that $\nabla y_\eps  = \Id$ on $\Omega$ and $\mathcal{H}^1(J_{y_\eps}) = 4$, i.e.,  \eqref{eq:unif-en} holds true. \EEE It is also easy to check that $u_\eps \to u$ in measure on $\Omega$.  Finally, the functions $y_\eps$ satisfy the   Ciarlet-Ne\v{c}as  \EEE non-interpenetration  condition,  since for $\eps$ sufficiently small the three sets \EEE
$$[y_\eps( \lbrace x_1 < - {2\eps}\rbrace )], \quad  [y_\eps( \lbrace  -{2\eps} < x_1 < 0 \rbrace   )], \quad [y_\eps( \lbrace x_1 >0\rbrace )]
$$ 
are pairwise disjoint.

\medskip
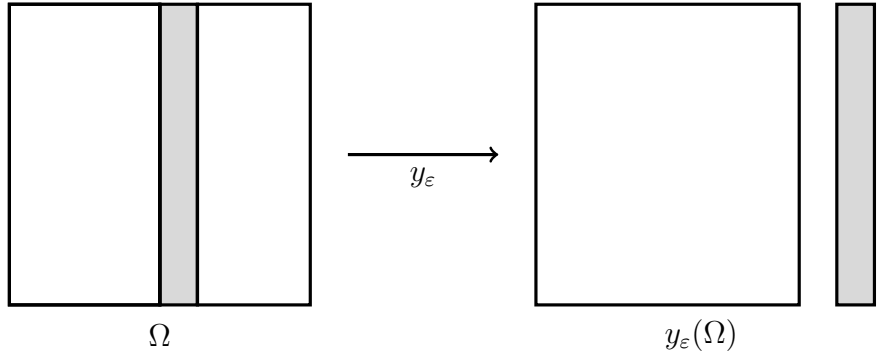
\begin{figure}[h!]
\begin{tikzpicture}
\draw[black, very thick] (0,0) rectangle (2,4);
\draw[very thick](0,0) rectangle (2,4);
\filldraw[fill= gray, very  thick, fill opacity=0.3] (2,0) rectangle (2.5,4);
\draw[very thick] (2.5,0) rectangle (4,4);
\node  at (2, -0.4) {\Large{$\Omega$}};
\draw[very thick] (7,0) rectangle (10.5,4);
\draw[black, very thick] (2, 0) -- (2, 4);
\draw[black, very thick] (2.5, 0) -- (2.5, 4);
\filldraw[fill = gray, very thick, fill opacity=0.3] (11, 0) rectangle (11.5, 4);
\draw[black, ->, very thick] (4.5, 2) -- (6.5, 2);
\node at (9.2, -0.4) {\Large{$y_{\eps} (\Omega)$}};
\node at (5.5, 1.7) {\Large{$y_{\eps}$}};
\end{tikzpicture}
\caption{Graphic representation of the deformation~$y_{\eps}$ in~\eqref{e:def-example-1}.} \label{f:1}
\end{figure}
}
\end{example}

A crucial point in the example is that the length of the jump along the sequence has twice the size of the limiting jump. Our second result shows that, under a suitable energy convergence of the rescaled displacements and  a slightly  stronger control on elastic energies, the pathological situation in Example \ref{ex} can be avoided.

\begin{theorem}[From Ciarlet-Ne\v{c}as to contact condition]
\label{thm: main}
Let $\Omega \subseteq \R^2$ be open and bounded. \EEE  Let $(y_\eps)_\eps \subset GSBV^2(\Omega;\R^2)$ be a sequence satisfying~\eqref{e:CN}. For every $\ep>0$, let  $u_\eps$ be defined as in \eqref{eq: rescalidipl}, \EEE  and assume that there exists $u \in GSBD^2(\Omega)$ such that $u_\eps \to u$ in measure on~$\Omega$. Suppose moreover that  there exists $\gamma > \tfrac{1}{2}$ such that
\begin{subequations}
\label{eq: assu-u}
\begin{align}
&   \sup_{\eps>0} \,  \eps^{1-\gamma}\Vert  \nabla u_\eps \Vert_{L^2(\Omega)} <+\infty,\label{eq: assu-u-1}\\
&   \lim_{\eps \to 0} \, \Vert e(u_\eps) \Vert^2_{L^2(\Omega)} + \mathcal{H}^{1}(J_{u_\eps}) = \Vert e(u) \Vert^2_{L^2(\Omega)} + \mathcal{H}^{1}(J_{u}). \label{eq: assu-u-2}
\end{align}
\end{subequations}
Then, $u$ satisfies  \eqref{eq: CC}.  
\end{theorem}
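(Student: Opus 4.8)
The plan is to argue by contradiction. Suppose that $u$ violates \eqref{eq: CC} on a set of positive $\mathcal{H}^1$-measure, i.e., there exists $J' \subseteq J_u$ with $\mathcal{H}^1(J') > 0$ such that $\nu_u(x) \cdot [u](x) < 0$ for all $x \in J'$. By standard $\mathcal{H}^1$-density properties of rectifiable sets we may further reduce to a subset where $[u]\cdot\nu_u \le -\delta < 0$ and where $J_u$ has a good blow-up behaviour. The heart of the argument is then a blow-up analysis around a suitable point $x_0 \in J'$: at $\mathcal{H}^1$-a.e.\ such point, after rescaling $u$ converges (in the appropriate $GSBD$ sense) to a \emph{pure jump function} $u_{x_0}$ taking two constant values $u^+_{x_0}$, $u^-_{x_0}$ separated by the hyperplane $\Pi^{\nu_u(x_0)}$, with $(u^+_{x_0} - u^-_{x_0})\cdot \nu_u(x_0) < 0$. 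First I would set up this blow-up carefully, quantifying on small squares $Q_\rho(x_0)$ both the closeness of $u$ to $u_{x_0}$ (in measure and in the sense that $\|e(u)\|_{L^2(Q_\rho(x_0))}$ is $o(\rho^{1/2})$ and $\mathcal{H}^1(J_u \cap Q_\rho(x_0)) = \rho + o(\rho)$, using \eqref{eq: assu-u-2} to upgrade weak jump convergence to this local length control), and the closeness of $u_\ep$ to $u$ via the convergence in measure.

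\textbf{Transfer to the nonlinear deformations via the Korn--Poincaré inequality.} The next step is to understand what the Ciarlet--Ne\v{c}as condition \eqref{e:CN} says about $y_\ep$ on such a small square $Q_\rho(x_0)$. The idea is to apply the piecewise Korn--Poincaré inequality of Proposition~\ref{th: kornpoin-sharp} (on $Q_\rho(x_0)$, appropriately rescaled) to $u_\ep$: one obtains a Caccioppoli partition of $Q_\rho(x_0)$ into a small remainder $R$ and finitely many pieces $P_j$ with $\|u_\ep - a_j\|_{L^\infty(P_j)} \le C_\theta \|e(u_\ep)\|_{L^2(Q_\rho(x_0))}$, the $a_j$ being infinitesimal rigid motions. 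Because in the blow-up limit $u$ is essentially a two-valued step function across $\Pi^{\nu_u(x_0)}$, for $\rho$ small and then $\ep$ small the relevant partition reduces (up to small error) to the two half-squares $Q^\pm_\rho$, on which $u_\ep$ is close to constants $c^\pm_\ep$ with $(c^+_\ep - c^-_\ep)\cdot \nu_u(x_0) < 0$. Translating back via $y_\ep = \id + \ep u_\ep$, this means $y_\ep$ maps $Q^+_\rho$ and $Q^-_\rho$ into two translates of the same square, shifted by $\ep c^+_\ep$ and $\ep c^-_\ep$ respectively, and the condition $(c^+_\ep - c^-_\ep)\cdot \nu_u(x_0) < 0$ forces these two deformed half-squares to \emph{overlap} in a region of area bounded below by $c\,\delta\,\ep\,\rho$ (a slab of width $\sim \ep\delta$ and length $\sim \rho$). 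Here the extra control \eqref{eq: assu-u-1} on $\nabla u_\ep$ is what guarantees that the non-rigid part of the deformation on each piece is a genuinely lower-order perturbation of the rigid motion (the $L^\infty$-oscillation beyond the rigid part is controlled by $\ep^{\gamma}$-type quantities which are $o(\ep)$ since $\gamma > 1/2$, after balancing against $\rho$), so the overlap is not destroyed.

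\textbf{Deriving the contradiction with the area formula.} Finally, the overlap of the images of $Q^+_\rho$ and $Q^-_\rho$ contradicts \eqref{e:CN}. Indeed, by the area formula \eqref{eq:area} together with $\det \nabla y_\ep > 0$ a.e.\ and $\nabla y_\ep \approx \Id$ (more precisely $\det\nabla y_\ep \to 1$ in $L^1$ on these squares, since $\nabla u_\ep$ is small in $L^2$ there), we have $\int_{Q_\rho(x_0)} \det \nabla y_\ep \, \mathrm{d}x = \int_{\R^2} m(y_\ep, z, Q_\rho(x_0) \cap \Omega_d)\,\mathrm{d}z$, and the left-hand side is $\approx \mathcal{L}^2(Q_\rho(x_0)) = \rho^2$, while the overlap region of positive area $\gtrsim \ep\delta\rho$ contributes with multiplicity $\ge 2$ to the integrand, so the measure-theoretic image $[y_\ep(Q_\rho(x_0))]$ has measure at most $\rho^2 - c\,\ep\,\delta\,\rho + o(\ep\rho)$. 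Summing over finitely many disjoint such squares (or iterating a Vitali-type covering of $J'$) and comparing with \eqref{e:CN} applied on all of $\Omega$ — using that outside the covering $y_\ep$ is close to a measure-preserving map — yields $\int_\Omega \det \nabla y_\ep \, \mathrm{d}x > \mathcal{L}^2([y_\ep(\Omega)])$ for $\ep$ small, contradicting \eqref{e:CN}. I expect the main obstacle to be the second step: making the blow-up analysis quantitative enough that the Korn--Poincaré partition genuinely localizes to the two half-squares with the correct separation and a \emph{uniform} (in $\ep$) lower bound on the overlap area, carefully tracking the interplay of the three small parameters $\rho$, $\ep$, and $\theta$ and ensuring that all error terms (the remainder set $R$, the non-rigid oscillation governed by \eqref{eq: assu-u-1}, the mismatch between $J_{u_\ep}$ and $J_u$) are of lower order than the gain $\ep\delta\rho$. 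This is presumably where Proposition~\ref{prop: Korn} referenced in the introduction does the work.
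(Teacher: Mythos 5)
Your proposal follows essentially the same route as the paper: contradiction, blow-up around a point of the set where \eqref{eq: CC} fails, the piecewise Korn--Poincar\'e inequality to split $u_\eps$ into two pieces on which it is close to $u^+$ and $u^-$, a translation-overlap argument to bound the measure of the image from above, and the area formula to compare with the integral of the determinant. Two remarks on where the sketch cuts corners.

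First, the final comparison can be localized in one stroke, and your Vitali-covering step is an unnecessary detour. Combining the global condition \eqref{e:CN} with the area formula \eqref{eq:area}, and the trivial observation that the multiplicity $m(y_\eps,\cdot,\Omega\cap\Omega_d)$ is $\geq 1$ on the measure-theoretic image, shows that \eqref{e:CN} actually holds with \emph{equality} and that $m=1$ a.e.; consequently
\[
\int_E \det\nabla y_\eps\,{\rm d}x \;=\; \mathcal{L}^2\big([y_\eps(E)]\big)\qquad\text{for every measurable } E\subset\Omega,
\]
which is the paper's \eqref{eq: NC refined}. It is then enough to exhibit a \emph{single} set $E_\eps$ inside a \emph{single} blow-up square violating this identity; no covering of $J'$ and no control of $y_\eps$ away from the blow-up point is needed.

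Second, the claim that the Korn--Poincar\'e partition ``reduces, up to small error, to the two half-squares $Q^\pm_\rho$'' overstates what Proposition~\ref{prop: Korn} delivers, and a direct half-square picture does not work. What one gets are sets $D^\pm_\eps$ (unions of Korn--Poincar\'e pieces on which $u_\eps$ is within $3\eta$ of $u^\pm$) whose boundaries lie mostly in $J_{u_\eps}$; these are \emph{not} the half-squares, they may contain small components, holes and overshoots across $\{x_1=0\}$, and $\partial^*D^\pm_\eps$ is not controlled in the way the naive picture suggests. The paper must extract a Jordan curve $\Gamma_\eps\subset\partial^*D^+_\eps$ spanning the square, pass to the saturated sets $G^\pm_\eps$, control the $\mathcal{L}^2$-measure of a $3\eta\eps$-neighbourhood of $\partial^*G^\pm_\eps$ via a Besicovitch covering, and only then prove the overlap lower bound \eqref{eq: main7} by slicing in the jump direction $\mu$. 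You flag this as the crux, which is correct, but the step is genuinely more delicate than ``close to constants on the two half-squares.'' Finally, a small misattribution: \eqref{eq: assu-u-1} does not control the $L^\infty$-oscillation of $u_\eps$ beyond the rigid motions --- that comes from \eqref{eq: assu-u-2} via the Korn--Poincar\'e bound \eqref{eq: kornpoinsharp2XXX-3}. Assumption \eqref{eq: assu-u-1} is used only in the Taylor expansion of the determinant, where the quadratic error $c_0\eps^2\int_\Omega|\nabla u_\eps|^2 = O(\eps^{2\gamma})$ must be $o(\eps)$ so as not to cancel the overlap gain of order $\eps\rho$; this is exactly what $\gamma>1/2$ buys.
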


Let us comment on the hypotheses of Theorem~\ref{thm: main}. By a compactness argument, see Proposition~\ref{prop: comp} and \eqref{eq: first conditions-3} below, assumption~\eqref{eq: assu-u-1} holds for sequences~$(u_{\eps})_\ep$ \EEE such that the corresponding deformation fields \EEE $y_{\eps} = \id + \eps u_{\eps}$ have  bounded nonlinear Griffith energy $\mathcal{E}_{\eps}$,  defined in~\eqref{rig-eq: Griffith en} below. \EEE Condition~\eqref{eq: assu-u-2} is instead stronger. In variational terms, it requires the rescaled displacements  $(u_{\eps})_{\eps}$ associated  to $(y_{\eps})_{\eps}$   \EEE to be an {\em energy-convergent sequence}  for the limiting displacement~$u$, in terms of the energy $\mathcal{F}$ defined in \eqref{eq:unif-en}. \EEE As shown in Example~\ref{ex}, condition~\eqref{eq: assu-u-2} cannot be weakened to a more traditional energy bound of the form \eqref{eq:unif-en}. \EEE Hence, Theorem~\ref{thm: main} states that   \eqref{e:CN} yields the contact condition \eqref{eq: CC}, provided  $(u_{\eps})_\ep$ is an energy-convergent sequence for $\mathcal{F}$. \EEE
\EEE

 We defer the proof of  Theorem~\ref{thm: main}  to  Section~\ref{sec: proof} and continue here with the presentation of our results. 
 The second  part of Section~\ref{s:results} is devoted to the definitions of linear and nonlinear Griffith energies and to the passage from nonlinear to linear models, under the additional Ciarlet-Ne\v{c}as and contact conditions.  

\subsection{From nonlinear to linear Griffith models with non-interpenetration} \label{s:nonlinear}

 We start by introducing the nonlinear Griffith energy for non-simple materials. We let $W \colon \R^{2 \times 2} \to [0,+\infty)$ be a  single well, frame indifferent stored energy   density. To be precise, \EEE we suppose  that there exists $c>0$ such that
\begin{subequations}
\label{assumptions-W}
\begin{align}
&  W \text{ continuous and $C^3$ in a neighborhood of $SO(2)$},\label{assumptions-W-1}\\
 &   W(\mathrm{RF}) = W(\mathrm{F}) \text{ for all } \mathrm{F} \in \R^{2 \times 2}, \mathrm{R} \in SO(2),\label{assumptions-W-2}\\
 &  W(\mathrm{F}) \ge c\dist^2(\mathrm{F},SO(2)) \ \text{ for all $\mathrm{F} \in \R^{2 \times 2}$}, \  W(\mathrm{F}) = 0 \text{ iff } \mathrm{F} \in SO(2). \label{assumptions-W-3}
\end{align}
\end{subequations}

 Let us fix $\kappa>0$,  $\beta \in (\frac{2}{3},1)$, and two open bounded subsets $\Omega \subseteq \Omega'$ of~$\R^{2}$ with Lipschitz boundaries $\partial\Omega$ and $\partial\Omega'$, respectively, such that~\eqref{eq: density-condition2}--\eqref{eq: density-condition} hold. Recalling the definition~\eqref{eq: space} of the space $GSBV^{2}_{2}(\Omega'; \R^{2})$, \EEE for  $\eps >0$ we define the energy $\mathcal{E}_\eps \colon GSBV^2_2( \Omega' ; \R^2 \EEE) \to [0,+\infty]$ by
\begin{align}\label{rig-eq: Griffith en}
\mathcal{E}_\eps(y) =  \begin{cases}
\displaystyle  \eps^{-2}\int_{ \Omega' \EEE} W(\nabla y(x)) \,dx +\eps^{-2\beta} \int_{ \Omega' \EEE} |\nabla^2 y(x)|^2 \, dx  +  \kappa  \mathcal{H}^{1}(J_y) \EEE  & \text{ if $J_{\nabla y}  \subseteq \EEE J_y$,} \\ 
\displaystyle \vphantom{\int_{\Omega'}} + \infty & \text{ else.} \end{cases}
\end{align}
Here and in the following, the inclusion  $J_{\nabla y}  \subseteq \EEE J_y$ has to be understood up to an  $\mathcal{H}^{1}$\EEE-negligible set.  Since $W$ grows quadratically around  $SO(2)$, \EEE the parameter $\eps$ corresponds to the typical scaling of strains for configurations with finite energy.  We further notice that the choice of two open sets $\Omega$ and $\Omega'$ is due to the fact that we are interested in boundary value problems, where a Dirichlet datum is to be imposed on~$\Omega' \setminus \overline{\Omega}$ (see also Section~\ref{s:function-spaces}). \EEE

 Due to the presence of the second term  in \eqref{rig-eq: Griffith en}, we deal with a Griffith-type model for \emph{nonsimple materials}. Elastic energies  depending  on the second gradient of the deformation were introduced by {\sc Toupin} \cite{Toupin:62,Toupin:64} to  enhance compactness and rigidity properties. In our context, we consider a second gradient term (describing the absolutely continuous part of the gradient of $\nabla y$) \EEE for a material undergoing fracture, which has a regularization effect on the entire intact region $\Omega' \setminus  J_y $ of the material. This is modeled by the condition $J_{\nabla y}  \subseteq \EEE J_y$. 

 We finally remark that the condition $J_{\nabla y} \subseteq J_y$ in~\eqref{rig-eq: Griffith en} is not closed under convergence in measure on~$\Omega'$, and to guarantee the existence of minimizers one needs to pass to a suitable relaxation, see \cite[Proposition 2.1 and Theorem 2.2]{higherordergriffith}.\EEE

 The corresponding linearized Griffith model is represented by the functional $\mathcal{E}\colon GSBD^2(\Omega') \to [0,+\infty)$   given by \EEE
\begin{align}\label{rig-eq: Griffith en-lim}
\mathcal{E}(u) \coloneqq  \int_{\Omega'} \frac{1}{2} Q(e(u))\,{\mathrm dx}  + \kappa\mathcal{H}^{1}(J_u),
\end{align}
where  $Q\colon \R^{2 \times 2} \to [0,+\infty)$  is the quadratic form $Q(\mathrm{F}) = D^2W(\Id)\mathrm{F} : \mathrm{F}$ for all $\mathrm{F} \in \R^{2 \times 2}$. In view of  \eqref{assumptions-W},  $Q$ is positive definite on $\R^{2 \times 2}_{\rm sym}$ and vanishes on $\R^{2 \times 2 }_{\rm skew}$.  \EEE

The $\Gamma$-convergence of~$\mathcal{E}_{\eps}$ to~$\mathcal{E}$ has been studied in~\cite{higherordergriffith} in dimension~$d \geq 2$, with neither non-interpenetration nor contact conditions. We also refer to~\cite{Friedrich:15-2} for a linearization result in dimension~$d=2$ without second order regularization in~\eqref{rig-eq: Griffith en}.  \EEE
%
%
%
%

Justified by~\cite[Section 6.1]{Giacomini-Ponsiglione:2008},~\cite[Appendix A]{DalMaso-Lazzaroni:2010}, or \cite[Section 5.1]{Francfort-Marigo:1998}, \EEE a natural conjecture would be that, in this limiting passage, the conditions  \eqref{e:CN} and   \eqref{eq: CC} could simply be included on the nonlinear and linear level, respectively. Example~\ref{ex} showed that this is not the case, as \eqref{eq: CC} is not maintained for limits of sequences satisfying \eqref{e:CN}. In the next example we further show that the variational limit of the functionals~$\mathcal{E}_{\eps}$ cannot  expected to be expressed by means of the classical Griffith energy.

\begin{example}
\label{ex:anisotropic}
{\normalfont
Let $\Omega= (-1, 1)^{2}$, let $\mu = (\mu_{1}, \mu_{2}) \in \R^{2}$ be such that $\mu_{1} <0$, and let $u= (\frac{\mu_{1}}{2}, \mu_{2} )  \chi_{\lbrace x_1 >0\rbrace}$. To fix the ideas, we also assume $\mu_{2}<0$. As in Example~\ref{ex}, $J_{u} = \{0\} \times (-1, 1)$ has length $\mathcal{H}^{1}(J_{u}) = 2$ and normal vector~$\nu_{u} = e_{1}$. Hence, $[u] \cdot e_{1} = \frac{\mu_{1}}{2} <0$ on~$J_{u}$. For $\varepsilon>0$ we set $n_{\eps} := \lfloor \frac{1}{\varepsilon | \mu_{2}|} \rfloor$,  where $\lfloor \cdot \rfloor$ denotes the integer \EEE part, and let 
\begin{align*}
R_{\eps}^{k} & := \Big(- \eps\frac{|\mu_{1}|}{2}, \eps \frac{|\mu_{1}|}{2} \Big) \times \big(-1 + 2k \eps |\mu_{2}| , -1 + (2k+1) \eps | \mu_{2} | \big)\qquad  \text{for $k =0, \ldots, n_{\eps}-1$},\\
R_{\eps} & := \bigcup_{k=0}^{n_{\eps}- 1} R^{k}_{\eps}\,.
\end{align*}
Then, we define $y_{\eps} \in GSBV^2(\Omega;\R^2)$ as (see also Figure~\ref{f:2})
\begin{equation}
\label{e:def-example-2}
y_{\eps} :=  \id + \eps u_\eps \qquad \text{with} \qquad u_{\eps} :=\left( \frac{\mu_{1}}{2} ,  \mu_{2} \right) \chi_{ (\Omega \setminus R_{\eps})\cap\{x_1>0\}} + \left(\frac{2}{\eps}, 0 \right) \chi_{R_{\eps}}\,,
\end{equation}
so that~$(y_{\eps})_\ep$ satisfies \eqref{eq:unif-en}, and \EEE \eqref{e:CN}, as in Example~\ref{ex}, and  $u_{\eps} \to u$ in measure. \EEE

\medskip
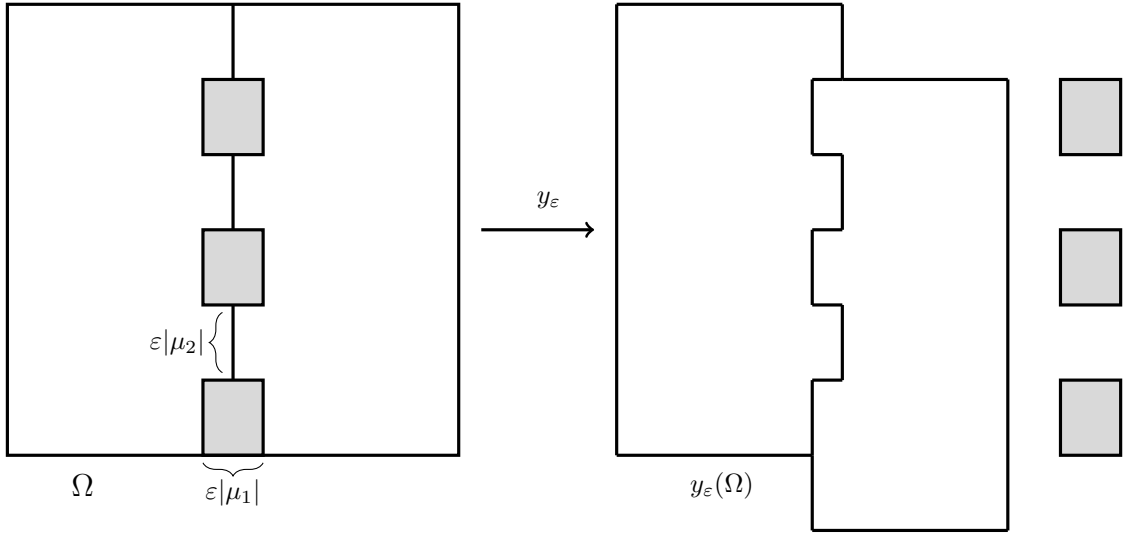
\begin{figure}[h!]
\begin{tikzpicture}
\draw[black, very thick] (-2,-2) rectangle (4,4);
\filldraw[fill= gray, very  thick, fill opacity=0.3] (0.6,-2) rectangle (1.4,-1);
\draw[black, very thick] (1, -1) -- (1, 0);
\filldraw[fill= gray, very  thick, fill opacity=0.3] (0.6,0) rectangle (1.4,1);
\draw[black, very thick] (1, 1) -- (1, 2);
\filldraw[fill= gray, very  thick, fill opacity=0.3] (0.6,2) rectangle (1.4,3);
\draw[black, very thick] (1, 3) -- (1, 4);
\node  at (-1, -2.4) {\Large{$\Omega$}};
\draw [decorate,decoration={brace, amplitude=5pt}, xshift=-4pt,yshift=0pt]
(1,-0.9) -- (1,-0.1) node [black,midway,xshift=-0.6cm] {  $\ep|\mu_{2}|$};
\draw [decorate,decoration={brace, amplitude=5pt}, xshift=0pt,yshift=0pt]
(1.4,-2.1)--(0.6,-2.1)  node [black,midway,yshift=-0.4cm] {$\ep|\mu_{1}|$};
\draw[black, very thick, ->] (4.3, 1) -- ( 5.8, 1);
\node at (5.2, 1.4) {$\Large{y_{\eps}}$};
\draw[black, very thick] (6.1,-2) -- (8.7,-2);
\draw[black, very thick] (6.1, -2) -- (6.1, 4);
\draw[black, very thick] (8.7, -2) -- (8.7, -1);
\draw[black, very thick] (8.7, -1) -- (9.1, -1);
\draw[black, very thick] (9.1, -1) -- (9.1, 0);
\draw[black, very thick] (9.1, 0) -- (8.7, 0);
\draw[black, very thick] (8.7, 0) -- (8.7, 1);
\draw[black, very thick] (8.7, 1) -- (9.1, 1);
\draw[black, very thick] (9.1, 1) -- (9.1, 2);
\draw[black, very thick] (9.1, 2) -- (8.7, 2);
\draw[black, very thick] (8.7, 2) -- (8.7, 3);
\draw[black, very thick] (8.7, 3) -- (9.1, 3);
\draw[black, very thick] (9.1, 3) -- (9.1, 4);
\draw[black, very thick] (9.1, 4) -- (6.1, 4);
\draw[black, very thick] (8.7, -2) -- (8.7, -3);
\draw[black, very thick] (8.7, -3) -- (11.3, -3);
\draw[black, very thick] (11.3, -3) -- (11.3, 3);
\draw[black, very thick] (11.3, 3) -- (9.1, 3);
\node at (7.5, -2.4) {$\Large{y_{\eps}(\Omega)}$};
\filldraw[fill= gray, very  thick, fill opacity=0.3] (12,-2) rectangle (12.8,-1);
\filldraw[fill= gray, very  thick, fill opacity=0.3] (12,0) rectangle (12.8,1);
\filldraw[fill= gray, very  thick, fill opacity=0.3] (12,2) rectangle (12.8,3);
\end{tikzpicture}
\caption{Graphic representation of the deformation~$y_{\eps}$ in~\eqref{e:def-example-2}. The set $R_{\eps}$ is in gray.} \label{f:2}
\end{figure}

In particular, the jump set~$J_{y_{\eps}}$ satisfies the inequalities
\begin{align*}
\mathcal{H}^{1} ( J_{y_{\eps}}) & \leq  \mathcal{H}^{1} (\partial R_{\eps}) + (n_{\eps} +  1 \EEE )  \eps |\mu_{2}| = 2n_{\eps} \eps | \mu_{1}| + 2 n_{\eps} \eps |\mu_{2}| + (n_{\eps} + 1 \EEE) \eps | \mu_{2}|
\\
&
= 3 n_{\eps} \eps | \mu_{2}| + 2 n_{\eps} \eps | \mu_{1}|  +  \eps \EEE | \mu_{2}| \leq 3 \EEE + 2 \frac{| \mu_{1}|}{|\mu_{2}|} +  \eps \EEE | \mu_{2}| \,,\\
\mathcal{H}^{1}(J_{y_{\eps}}) & \geq  \mathcal{H}^{1} ( \partial R_{\eps}) -2 \eps|\mu_{1}| + (n_{\eps} - 1)  \eps |\mu_{2}| =  2n_{\eps} \eps | \mu_{1}| + 2 n_{\eps} \eps |\mu_{2}| -2 \eps|\mu_{1}| + (n_{\eps} -1) \eps | \mu_{2}|
\\
&
= 3 n_{\eps} \eps | \mu_{2}| + 2 n_{\eps} \eps | \mu_{1}| -2 \eps|\mu_{1}| - \eps | \mu_{2}|  \geq \EEE  3 - 4 \eps | \mu_{2}| + 2 \frac{| \mu_{1}|}{|\mu_{2}|} -  4 \eps |\mu_{1}|\,.
\end{align*}
Thus, we deduce that
\begin{displaymath}
\lim_{\eps \to 0} \mathcal{H}^{1}(J_{y_{\eps}}) = 3 + 2 \frac{|\mu_{1}|}{|\mu_{2}|}\,,
\end{displaymath}
 whereas in comparison $\mathcal{H}^1(J_u) = 2$. \EEE
}
\end{example}

 Examples \ref{ex} and \ref{ex:anisotropic} suggest that, besides $\kappa  \mathcal{H}^{1}( J_u )$, the formulation of the variational limit of~$\mathcal{E}_{\eps}$ in~\eqref{rig-eq: Griffith en}  should  account for an additional \EEE anisotropic surface term being positive whenever  \eqref{eq: CC} is violated. This term should depend \EEE on the orientation and on the amplitude of the jump of the displacement~$u$.   The full characterization of this $\Gamma$-limit is beyond the scope of the present contribution, and in the following we restrict our attention to \emph{energy-convergent sequences}. In order to \EEE comply with boundary conditions on $\Omega' \setminus  \overline{\Omega}\EEE$, for $h \in W^{2, \infty}(\Omega'; \R^{2})$ and $\eps >0$  we  set \EEE
\begin{align}\label{eq: boundary-spaces}
\mathcal{S}_{\eps,h} & = \lbrace y \in GSBV_2^2(\Omega';\R^d)\colon \ y = \id + \eps h \text{ on } \Omega' \setminus \overline{\Omega} \rbrace,
\end{align} 
and also recall the definition $GSBD^2_h(\Omega')$ in \eqref{eq: boundary-space}. \EEE

 We start by clarifying the definition of convergence. \EEE  The general idea in linearization results (see, e.g., \cite{Agostiniani, Braides-Solci-Vitali:07, DalMasoNegriPercivale:02, MFMK, FriedrichSchmidt:2014.2, NegriToader:2013, Schmidt:08, Schmidt:2009})  is to obtain compactness for the rescaled displacement fields $(u_\eps)_\eps$ associated to a sequence $(y_\eps)_\eps$ with $\sup_\eps \mathcal{E}_\eps(y_\eps) < +\infty$, see \eqref{eq: rescalidipl}.  For bodies undergoing fracture, however,  \EEE no compactness can be expected: consider,  for instance, \EEE the functions $y_\eps  \coloneqq \EEE \id \chi_{\Omega' \setminus B} + {\rm R}\,{\id}\EEE \chi_{B}$, for a small ball  $B \subset \Omega$ and a rotation  $\mathrm{R} \in SO(2)$, $\mathrm{R} \neq \Id$. \EEE Then $|u_\eps|, |\nabla u_\eps| \to \infty$ on $B$ as $\eps \to 0$.  As observed in~\cite[Theorem~2.3]{higherordergriffith}, this phenomenon can be avoided  if the deformation is \emph{rotated back to the identity} on the set $B$. This justifies the following notion of convergence, see also~\cite[Definition~2.4]{higherordergriffith}.

\begin{definition}[Asymptotic representation]
\label{def:conv}
{\normalfont
Fix  $\gamma \in (\frac{2}{3},\beta)$. \EEE We say that a sequence $(y_\eps)_\eps$ with $y_\eps \in \mathcal{S}_{\eps,h}$ is \emph{asymptotically represented} by a limiting displacement $u \in GSBD^2_h  (\Omega') \EEE$, and write $y_\eps \rightsquigarrow u$, if there exist sequences of Caccioppoli partitions $(P_j^\eps)_j$ of $\Omega'$ and corresponding rotations $(  R ^\eps_j)_j \subset SO( 2 \EEE)$ such that,  setting
\begin{equation}\label{eq: modifica}
y^{\rm rot}_\eps    \coloneqq    \sum_{j=1}^\infty R^\eps_j \,  y_\eps \,  \chi_{P^\eps_j}  \qquad \text{and} \qquad u_{\eps} \coloneqq \frac{1}{\eps} ( y^{\rm rot}_\eps \EEE  - \id),
\end{equation}
the following conditions hold:
\begin{subequations}
\label{eq: noname}
\begin{align}
 &  \Vert {\rm sym} (\nabla y_\eps^{\rm rot}) -\Id\Vert_{L^2(\Omega')} \le C\eps,\label{eq: first conditions-2} \\ 
 &  \Vert  \nabla y_\eps^{\rm rot} - \Id \Vert_{L^2(\Omega')} \le C\eps^\gamma, \label{eq: first conditions-3}\\
  &   |\nabla y_\eps^{\rm rot} - \Id| \le C\big(\eps^\gamma + \dist(\nabla y_\eps^{\rm rot},SO(2)) \big)  \ \ \text{a.e.~on $\Omega'$}\EEE \label{eq: first conditions-3.5}\\
& u_\eps \to u \ \ \ \text{a.e.\ in $\Omega' \setminus E_u$},\label{eq: the main convergence-1} \\
 & e(u_\eps) \rightharpoonup e(u)  \ \ \text{weakly in $L^2(\Omega' \setminus E_u; \R^{2\times 2}_{\rm sym})\EEE$},\label{eq: the main convergence-2}\\
 &  \mathcal{H}^{1}(J_{u}) \le \liminf_{\eps \to 0} \, \mathcal{H}^{1}(J_{u_\eps}) \le \liminf_{\eps \to 0}  \, \mathcal{H}^{1}(J_{y_\eps} \cup J_{\nabla y_\eps}),\label{eq: the main convergence-3}\\
 & e(u) = 0 \ \ \text{ on } \ E_u, \ \ \ \mathcal{H}^{1}\big((\partial^* E_u  \cap \Omega' \EEE )\setminus J_u \big) = \mathcal{H}^{1}(J_u \cap  (E_u)^1 \EEE ) = 0,\label{eq: the main convergence-4}
\end{align}
\end{subequations}
where    $E_u := \lbrace x\in \Omega: \, |u_\eps(x)| \to \infty \rbrace$ is a set of finite perimeter. 
}
\end{definition}

\begin{rem}
{\normalfont
The presence of the set $E_u$ is due to the compactness result in $GSBD^2(\Omega')$, see~\cite{Crismale}.  We point out that the behavior of the sequence  cannot be controlled on this set, but that this is \EEE not an issue  for minimization problems of Griffith energies since a minimizer can be recovered by choosing $u$ affine on $E_u$ with $e(u)=0$, cf.\ \eqref{eq: the main convergence-4}. We also note that $E_u \subset \Omega$, i.e., $E_u \cap (\Omega'\setminus \overline{\Omega}) = \emptyset$. \EEE    

We speak of asymptotic representation instead of convergence, and we use the symbol $ \rightsquigarrow $, in order to emphasize that  Definition \ref{def:conv} cannot be understood as a convergence with respect to a certain topology. Indeed, the  limit~$u$ \EEE for a given (sub-)sequence $(y_\eps)_\eps$ is not uniquely determined, but   rather \EEE depends on the choice of the sequences $(P_j^\eps)_j$ and $(R^\eps_j)_j$. For details in  that direction, in particular concerning a characterization of limiting displacements, we refer to \cite[Subsection~2.2]{higherordergriffith}.
} 
\end{rem}

 We have the following  compactness result for asymptotic representations.

\begin{proposition}[Compactness]\label{prop: comp}
Let $h \in W^{2, \infty}(\Omega'; \R^{2})$, and assume that $W$ satisfies \eqref{assumptions-W}.  Let $\gamma \in (\frac{2}{3},\beta)$.   Let $(y_\eps)_\eps$ be a sequence satisfying $y_\eps \in \mathcal{S}_{\eps,h}$ and $\sup_\eps \mathcal{E}_\eps(y_\eps) < +\infty$. Then there exists a subsequence (not relabeled) and $u \in GSBD^2_h(\Omega')$ such that $y_\eps \rightsquigarrow u$. 
\end{proposition}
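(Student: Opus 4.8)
The plan is to reduce the statement to the compactness theorem already established in~\cite{higherordergriffith} (specifically~\cite[Theorem~2.3]{higherordergriffith}), which provides exactly the asymptotic representation $y_\eps \rightsquigarrow u$ for sequences with equibounded nonlinear Griffith energies $\mathcal{E}_\eps$ in the non-simple material setting. Thus the proof is essentially a matter of verifying that the energy bound $\sup_\eps \mathcal{E}_\eps(y_\eps) < +\infty$ supplies precisely the hypotheses needed to invoke that result, together with a bookkeeping of the boundary condition encoded in~$\mathcal{S}_{\eps,h}$ and the space $GSBD^2_h(\Omega')$.

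Concretely, I would proceed as follows. First, from $\sup_\eps \mathcal{E}_\eps(y_\eps) < +\infty$ I extract the three pieces of information contained in the definition~\eqref{rig-eq: Griffith en}: the elastic energy bound $\int_{\Omega'} W(\nabla y_\eps)\,dx \le C\eps^2$, the second-gradient bound $\int_{\Omega'} |\nabla^2 y_\eps|^2\,dx \le C\eps^{2\beta}$, and the uniform surface bound $\mathcal{H}^1(J_{y_\eps}) \le C$; moreover $J_{\nabla y_\eps} \subseteq J_{y_\eps}$ up to $\mathcal{H}^1$-negligible sets. The quadratic growth~\eqref{assumptions-W-3} turns the elastic bound into $\int_{\Omega'} \dist^2(\nabla y_\eps, SO(2))\,dx \le C\eps^2$. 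Second, one applies the rigidity-type argument from~\cite{higherordergriffith}: this is where the Caccioppoli partition $(P_j^\eps)_j$ and the rotations $(R_j^\eps)_j$ are produced, so that the rotated-back deformation $y_\eps^{\rm rot}$ defined in~\eqref{eq: modifica} satisfies the strain estimates~\eqref{eq: first conditions-2}–\eqref{eq: first conditions-3.5}; crucially, the parameter restriction $\gamma \in (\tfrac{2}{3},\beta)$ is the range for which the interpolation between the first- and second-gradient bounds yields the exponent $\eps^\gamma$ in~\eqref{eq: first conditions-3}. Third, from these estimates one derives compactness of the rescaled displacements $u_\eps = \tfrac{1}{\eps}(y_\eps^{\rm rot} - \id)$ in $GSBD^2$ via the $GSBD$-compactness theorem of~\cite{Crismale}, which simultaneously identifies the limit $u \in GSBD^2(\Omega')$, the exceptional set $E_u$ of finite perimeter, and yields~\eqref{eq: the main convergence-1}–\eqref{eq: the main convergence-4}, the latter two including the lower-semicontinuity of the symmetrized gradient and the jump measure. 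Finally, since $y_\eps = \id + \eps h$ on $\Omega'\setminus\overline\Omega$ and a Dirichlet datum cannot create jumps in that region, one checks that the partition can be chosen so that $R_j^\eps = \Id$ on the component containing $\Omega'\setminus\overline\Omega$ (or equivalently that $E_u \cap (\Omega'\setminus\overline\Omega) = \emptyset$ and $y_\eps^{\rm rot} = \id + \eps h$ there), whence $u = h$ on $\Omega'\setminus\overline\Omega$ and thus $u \in GSBD^2_h(\Omega')$.

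The main obstacle—really the only substantive point—is the extraction of the partition and rotations together with the quantitative strain bounds~\eqref{eq: first conditions-2}–\eqref{eq: first conditions-3.5}, i.e., the geometric rigidity estimate in the fracture setting with the second-gradient regularization. This is the technical heart of~\cite[Theorem~2.3]{higherordergriffith} and I would not reprove it: it combines a Korn–Poincaré-type inequality in $GSBD^2$ with a piecewise application of the Friesecke–James–Müller rigidity estimate on the components of a well-chosen partition, using the second-gradient term to control the oscillation of $\nabla y_\eps$ and to make the number of components and the ``bad set'' controllable. Given that this result is available verbatim in the cited reference for every dimension $d \ge 2$, the proof of Proposition~\ref{prop: comp} amounts to the remark that the hypotheses match and the conclusions are exactly~\eqref{eq: noname}, with only the boundary-datum bookkeeping requiring a short additional argument.

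\begin{proof}
The statement is contained in~\cite[Theorem~2.3]{higherordergriffith}; we briefly indicate how it follows. Since $\sup_\eps \mathcal{E}_\eps(y_\eps) < +\infty$, for each $\eps>0$ we have $J_{\nabla y_\eps} \subseteq J_{y_\eps}$ up to an $\mathcal{H}^1$-negligible set, and there is $C>0$ such that
\begin{align*}
\int_{\Omega'} W(\nabla y_\eps)\,dx \le C\eps^2, \qquad \int_{\Omega'} |\nabla^2 y_\eps|^2\,dx \le C\eps^{2\beta}, \qquad \mathcal{H}^1(J_{y_\eps}) \le C.
\end{align*}
By~\eqref{assumptions-W-3} the first bound yields $\int_{\Omega'} \dist^2(\nabla y_\eps, SO(2))\,dx \le C\eps^2$. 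Applying~\cite[Theorem~2.3]{higherordergriffith} with the boundary datum $\eps h$ (recall $y_\eps \in \mathcal{S}_{\eps,h}$, so $y_\eps = \id + \eps h$ on $\Omega'\setminus\overline\Omega$) and with the chosen exponent $\gamma \in (\tfrac23,\beta)$, we obtain, up to a subsequence, Caccioppoli partitions $(P_j^\eps)_j$ of $\Omega'$ and rotations $(R_j^\eps)_j \subset SO(2)$ such that the rotated configuration $y_\eps^{\rm rot}$ in~\eqref{eq: modifica} and the rescaled displacement $u_\eps = \tfrac1\eps(y_\eps^{\rm rot} - \id)$ satisfy~\eqref{eq: first conditions-2}--\eqref{eq: first conditions-3.5}. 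In particular, from~\eqref{eq: first conditions-3} and the $GSBD^2$-compactness theorem of~\cite{Crismale}, there exist $u \in GSBD^2(\Omega')$ and a set of finite perimeter $E_u = \{x\in\Omega\colon |u_\eps(x)|\to\infty\}$ for which~\eqref{eq: the main convergence-1}--\eqref{eq: the main convergence-4} hold along a further subsequence (not relabeled). Finally, since on $\Omega'\setminus\overline\Omega$ the deformation equals the fixed smooth map $\id+\eps h$ and hence has no jump there, the partition may be chosen so that the component of $(P_j^\eps)_j$ containing $\Omega'\setminus\overline\Omega$ carries the rotation $\Id$; thus $y_\eps^{\rm rot} = \id + \eps h$ and $u_\eps = h$ on $\Omega'\setminus\overline\Omega$, whence $E_u \cap (\Omega'\setminus\overline\Omega) = \emptyset$ and $u = h$ on $\Omega'\setminus\overline\Omega$. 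Therefore $u \in GSBD^2_h(\Omega')$ and $y_\eps \rightsquigarrow u$, as claimed.
\end{proof}
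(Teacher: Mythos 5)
Your proof is correct and follows essentially the same route as the paper, which simply observes that the statement is contained in \cite[Theorem~2.3]{higherordergriffith}. The only nuance the paper adds that you gloss over is that property \eqref{eq: first conditions-3.5} is not stated explicitly in that reference but is established within its proof (see \cite[(4.11)]{higherordergriffith}); your asserting it as part of the cited conclusion is therefore harmless but slightly imprecise.
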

The statement has been shown in \cite[Theorem~2.3]{higherordergriffith}. Actually, property \eqref{eq: first conditions-3.5} has not been stated there explicitly, but has been used in the proof, see \cite[(4.11)]{higherordergriffith}. We now present a consequence of Theorem \ref{thm: main} about the passage from the Ciarlet-Ne\v{c}as to the contact condition, whose proof is also postponed to  Section~\ref{sec: proof}.

\begin{theorem}[From Ciarlet-Ne\v{c}as to contact condition in the asymptotic representation]\label{thm: CNCC2}
Let  $h \in W^{2, \infty}(\Omega'; \R^{2})$, and assume that $W$ satisfies \eqref{assumptions-W}. Let $(y_\eps)_\eps$ be a sequence satisfying $y_\eps \in \mathcal{S}_{\eps,h}$ and \eqref{e:CN}. Let \EEE $u \in GSBD^2_h(\Omega')$ be such that $y_\eps \rightsquigarrow u$ and  $\mathcal{E}_\eps(y_\eps) \to \mathcal{E}(u)$ as $\eps \to 0$. Finally, assume that \EEE $y_\eps^{\rm rot}$ in \eqref{eq: modifica} also satisfies \eqref{e:CN}. Then, $u$ satisfies  \eqref{eq: CC} on $J_u \setminus \partial^* E_u$. 
\end{theorem}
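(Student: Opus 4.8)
\emph{Strategy.} The plan is to derive Theorem~\ref{thm: CNCC2} from Theorem~\ref{thm: main}, applied with $\Omega'$ in the role of the domain and with the rescaled displacements $u_\eps = \tfrac1\eps(y^{\rm rot}_\eps - \id)$ of Definition~\ref{def:conv} in the role of the sequence in Theorem~\ref{thm: main}. Note first that $y^{\rm rot}_\eps \in GSBV^2(\Omega';\R^2)$, being the rearrangement of $y_\eps$ by the piecewise rotation $\sum_j R^\eps_j \chi_{P^\eps_j}$ over the Caccioppoli partition $(P^\eps_j)_j$; that $\det\nabla y^{\rm rot}_\eps = \det\nabla y_\eps >0$ a.e.; and that $y^{\rm rot}_\eps$ satisfies~\eqref{e:CN} by assumption. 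Thus two of the three structural hypotheses of Theorem~\ref{thm: main} are in place, and it remains to check~\eqref{eq: assu-u} and to cope with the fact that $u_\eps\to u$ only in measure on $\Omega'\setminus E_u$ --- which is exactly why the conclusion is restricted to $J_u\setminus\partial^* E_u$.

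First I would check~\eqref{eq: assu-u-1}: by~\eqref{eq: first conditions-3}, $\Vert\nabla u_\eps\Vert_{L^2(\Omega')} = \eps^{-1}\Vert\nabla y^{\rm rot}_\eps - \Id\Vert_{L^2(\Omega')} \le C\eps^{\gamma-1}$, so $\eps^{1-\gamma}\Vert\nabla u_\eps\Vert_{L^2(\Omega')}\le C$, and $\gamma\in(\tfrac23,\beta)$ gives $\gamma>\tfrac12$.

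The main computation is~\eqref{eq: assu-u-2}. By frame indifference~\eqref{assumptions-W-2}, and since $\nabla y^{\rm rot}_\eps = R^\eps_j\nabla y_\eps$ and $|\nabla^2 y^{\rm rot}_\eps| = |\nabla^2 y_\eps|$ a.e.\ on each $P^\eps_j$, one has $\mathcal{E}_\eps(y_\eps) = \eps^{-2}\int_{\Omega'} W(\nabla y^{\rm rot}_\eps)\,{\rm d}x + \eps^{-2\beta}\int_{\Omega'}|\nabla^2 y^{\rm rot}_\eps|^2\,{\rm d}x + \kappa\mathcal{H}^1(J_{y_\eps})$. The standard linearization lower bound of~\cite{Friedrich:15-2, higherordergriffith}, obtained by Taylor expanding $W$ around $SO(2)$ and using~\eqref{assumptions-W}, \eqref{eq: first conditions-2} and~\eqref{eq: first conditions-3.5}, gives $\liminf_\eps\big(\eps^{-2}\int_{\Omega'} W(\nabla y^{\rm rot}_\eps)\,{\rm d}x - \int_{\Omega'}\tfrac12 Q(e(u_\eps))\,{\rm d}x\big) \ge 0$, while weak lower semicontinuity of $F\mapsto\int_{\Omega'\setminus E_u}\tfrac12 Q(F)$ along~\eqref{eq: the main convergence-2} together with $e(u)=0$ on $E_u$ (see~\eqref{eq: the main convergence-4}) gives $\liminf_\eps\int_{\Omega'}\tfrac12 Q(e(u_\eps)) \ge \int_{\Omega'}\tfrac12 Q(e(u))$; moreover $\eps^{-2\beta}\int_{\Omega'}|\nabla^2 y^{\rm rot}_\eps|^2\ge 0$ and, by~\eqref{eq: the main convergence-3}, $\liminf_\eps\kappa\mathcal{H}^1(J_{y_\eps}) \ge \kappa\mathcal{H}^1(J_u)$. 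Since $\mathcal{E}(u) = \int_{\Omega'}\tfrac12 Q(e(u)) + \kappa\mathcal{H}^1(J_u)$, the hypothesis $\mathcal{E}_\eps(y_\eps)\to\mathcal{E}(u)$ forces each of these $\liminf$'s to be a limit equal to the corresponding right-hand side; in particular $\int_{\Omega'}\tfrac12 Q(e(u_\eps))\to\int_{\Omega'}\tfrac12 Q(e(u))$ and (by~\eqref{eq: the main convergence-3} once more, now as a squeeze) $\mathcal{H}^1(J_{u_\eps})\to\mathcal{H}^1(J_u)$. Splitting $\Omega'=(\Omega'\setminus E_u)\cup E_u$ and using $\liminf_\eps\int_{\Omega'\setminus E_u}\tfrac12 Q(e(u_\eps))\ge\int_{\Omega'\setminus E_u}\tfrac12 Q(e(u))$, the first convergence yields $\int_{E_u}\tfrac12 Q(e(u_\eps))\to 0$ and $\int_{\Omega'\setminus E_u}\tfrac12 Q(e(u_\eps))\to\int_{\Omega'\setminus E_u}\tfrac12 Q(e(u))$; as $Q$ is the quadratic form of an inner product on $\R^{2\times2}_{\rm sym}$ equivalent to the Euclidean one, weak convergence plus $Q$-norm convergence give $e(u_\eps)\to e(u)$ strongly in $L^2(\Omega'\setminus E_u;\R^{2\times2}_{\rm sym})$ and $\Vert e(u_\eps)\Vert_{L^2(E_u)}\to 0$, hence $\Vert e(u_\eps)\Vert^2_{L^2(\Omega')}\to\Vert e(u)\Vert^2_{L^2(\Omega')}$ (recall $e(u)=0$ on $E_u$). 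Together with $\mathcal{H}^1(J_{u_\eps})\to\mathcal{H}^1(J_u)$ this is~\eqref{eq: assu-u-2}. (Alternatively, this step can be quoted from the improved-convergence statements in~\cite{higherordergriffith}.)

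It then remains to run the conclusion of Theorem~\ref{thm: main} only at the right points. Measure convergence of $u_\eps$ enters the proof of Theorem~\ref{thm: main} solely through a blow-up performed around $\mathcal{H}^1$-a.e.\ point $x_0\in J_u$ at which~\eqref{eq: CC} is assumed to fail (cf.\ Proposition~\ref{prop: Korn}). By~\eqref{eq: the main convergence-4} we have $\mathcal{H}^1\big((\partial^* E_u\cap\Omega')\setminus J_u\big) = \mathcal{H}^1\big(J_u\cap (E_u)^1\big) = 0$, so $\mathcal{H}^1$-a.e.\ $x_0\in J_u\setminus\partial^* E_u$ is a Lebesgue density-$0$ point of $E_u$; at such a point the blow-ups of $E_u$ converge to the empty set, $E_u$ is asymptotically negligible at every scale, and the convergence $u_\eps\to u$ a.e.\ on $\Omega'\setminus E_u$ provided by~\eqref{eq: the main convergence-1} is all that the blow-up argument uses. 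Hence the proof of Theorem~\ref{thm: main} applies verbatim at $\mathcal{H}^1$-a.e.\ $x_0\in J_u\setminus\partial^* E_u$, yielding~\eqref{eq: CC} there. I expect this last point to be the true obstacle: one cannot invoke Theorem~\ref{thm: main} as a black box, because $u_\eps$ diverges on $E_u$ and neither can global measure convergence be restored nor can $E_u$ be removed by passing to an open subdomain ($E_u$ being merely a set of finite perimeter, possibly dense); one must re-enter the blow-up and exploit that the relevant jump points avoid $E_u$ in density. By comparison, verifying~\eqref{eq: assu-u-2} is computational but routine.
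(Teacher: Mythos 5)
Your overall strategy---reduce to Theorem~\ref{thm: main}, verify \eqref{eq: assu-u-1} from \eqref{eq: first conditions-3}, and establish the energy convergence \eqref{eq: assu-u-2} using $\mathcal{E}_\eps(y_\eps)\to\mathcal{E}(u)$---is the same as the paper's, and your treatment of $E_u$ via density-$0$ points is in the right spirit (the paper does essentially the same thing, but packaged as a localization to Lipschitz subdomains $\Omega_u\subset\subset\Omega'\setminus E_u$ with $\mathcal{H}^1(J_u\cap\partial\Omega_u)=0$, so that Theorem~\ref{thm: main} can be invoked as a black box rather than by re-entering the blow-up).

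There is, however, a genuine gap in the middle of your computation, and it is precisely the step you call ``computational but routine.'' You assert that ``the standard linearization lower bound \ldots obtained by Taylor expanding $W$ around $SO(2)$'' gives
$\liminf_\eps\big(\eps^{-2}\int_{\Omega'}W(\nabla y^{\rm rot}_\eps) - \int_{\Omega'}\tfrac12 Q(e(u_\eps))\big)\ge 0$.
That is not what Taylor expansion gives. The expansion $\eps^{-2}W(\Id+\eps G)=\tfrac12 Q(\mathrm{sym}\,G)+O(\eps|G|^3)$ is only valid where $\eps|G|=|\nabla y^{\rm rot}_\eps-\Id|$ is small, and the standard lower bound in \cite{Friedrich:15-2,higherordergriffith} therefore comes with a truncation: what one gets is $\liminf_\eps\eps^{-2}\int W(\nabla y^{\rm rot}_\eps)\ge\liminf_\eps\int\chi_\eps\tfrac12 Q(e(u_\eps))$ with $\chi_\eps=\chi_{\{|\nabla u_\eps|<\eta_\eps\}}$ for a suitable threshold $\eta_\eps\to\infty$, and then $\ge\int\tfrac12Q(e(u))$ by weak lower semicontinuity. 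Your claimed inequality drops the truncation, i.e.\ it asserts that the tail $\int_{\{|\nabla u_\eps|\ge\eta_\eps\}}\tfrac12 Q(e(u_\eps))$ is asymptotically negligible. That is exactly what needs to be \emph{proved}, and it does not follow from \eqref{eq: first conditions-2}--\eqref{eq: first conditions-3.5} by a pointwise argument; in the regime $\gamma\in(\tfrac23,\tfrac45)$ the crude bound $\eps^{-2}\int|\nabla y^{\rm rot}_\eps-\Id|^3\le C\eps^{5\gamma/2-2}$ diverges. The paper's actual proof is built around precisely this point: it first shows that $g_\eps:=\eps^{-2}\dist^2(\nabla y_\eps,SO(2))$ is equiintegrable, by a contradiction argument that exploits the assumed energy convergence through \eqref{eq: newproof1} with $\bar\chi_\eps=\chi_{\{g_\eps\le M_\eps\}}$; it then transfers this to equiintegrability of $|e(u_\eps)|^2$ using the linearization identity $|{\rm sym}(F-\Id)|=\dist(F,SO(2))+O(|F-\Id|^2)$ together with \eqref{eq: first conditions-3.5}; and only then, combined with $\mathcal{L}^2(\{|\nabla u_\eps|\ge\eta_\eps\}\cap\Omega_u)\to0$, does it obtain the untruncated liminf inequality and hence the strong $L^2$-convergence of $e(u_\eps)$. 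This equiintegrability step (related to \cite[Proof of Theorem 2.3]{Schmidt:08}) is the technical core of the proof of Theorem~\ref{thm: CNCC2}; your proposal, as written, skips it, and the parenthetical suggestion that it can be ``quoted from the improved-convergence statements in \cite{higherordergriffith}'' does not appear to be available as a black box, since the paper reproves it in full.
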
 
 
The assumption that also  $y_\eps^{\rm rot}$ satisfies \eqref{e:CN} is not really restrictive since it would also be possible to consider modifications of the form $y^{\rm rot}_\eps    \coloneqq    \sum_{j=1}^\infty (R^\eps_j \,  y_\eps  - b_j^\eps) \,  \chi_{P^\eps_j}$ for suitable $(b_j^\eps)_j \subset \R^2$ (cf.\ \cite[Theorem 2.2]{Friedrich:15-2}) such that \eqref{e:CN} holds. The full proof of this statement would be quite technical. Since this is not the main focus of the paper, we would rather not dwell on this point and just explain the general idea behind it: for $\ell\in \N$, assume that the sum of the contributions on the first $\ell-1$ sets is injective. If the local rotation of the contribution on the $\ell$-th set creates some overlapping, a translation $b_\ell^\eps$ is added to restore injectivity. An induction argument on the ordering of the partition then yields  the claim. \EEE 

 We conclude this section by stating the  third main \EEE contribution  of this work, whose proof is postponed to Section~\ref{s:th: recovery}. In particular, we assert that for every $u \in GSBD^{2}_{h}  (\Omega')$ satisfying the contact condition~\eqref{eq: CC} there exists an energy-convergent sequence (in the sense of Definition~\ref{def:conv}) which fulfills the Ciarlet-Ne\v{c}as  condition \EEE \eqref{e:CN}.\EEE

\begin{theorem}[Existence of energy-convergent sequences]\label{th: recovery}
Let $\Omega \subset \Omega' \subset  \R^2\EEE$ be bounded Lipschitz domains  satisfying~\eqref{eq: density-condition2}--\eqref{eq: density-condition}, let $h \in W^{2, \infty}(\Omega'; \R^{2})$, and assume that $W$ satisfies \eqref{assumptions-W}. Then, for every  $u \in GSBD^2_h (\Omega') $ \EEE satisfying~\eqref{eq: CC} \EEE there exists a sequence $(y_\eps)_\eps$ satisfying   \eqref{e:CN} and such that $y_\eps \in \mathcal{S}_{\eps,h}$, $y_\eps \rightsquigarrow u$, and  
  $$\lim_{\eps \to 0 } \mathcal{E}_{\eps}(y_\eps) = \mathcal{E}(u).$$ 
\end{theorem}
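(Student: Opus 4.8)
\textbf{Proof plan for Theorem~\ref{th: recovery}.}
The strategy is to combine the density result of Theorem~\ref{th: crismale-density2} with an explicit local construction near the jump set that turns the contact condition~\eqref{eq: CC} into the Ciarlet-Ne\v{c}as condition~\eqref{e:CN} at the nonlinear level. First I would reduce to a nice class of displacements: given $u \in GSBD^2_h(\Omega')$ satisfying~\eqref{eq: CC}, apply Theorem~\ref{th: crismale-density2} to obtain approximants $u_n \in SBV^2(\Omega;\R^2)$ agreeing with $h$ near $\partial\Omega'$, piecewise smooth off a finite union of segments $J_{u_n}$, with $u_n \to u$ in measure, $e(u_n) \to e(u)$ in $L^2$, and $\mathcal{H}^1(J_{u_n}) \to \mathcal{H}^1(J_u)$. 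The delicate point is that Theorem~\ref{th: crismale-density2} as stated need not preserve~\eqref{eq: CC}; here I would invoke the ``refined'' density statement alluded to in the introduction (``a refined $GSBD^2$-density result guaranteeing enhanced contact conditions for the approximants''), so that one may additionally assume $[u_n]\cdot\nu_{u_n} \ge 0$ on $J_{u_n}$, in fact with a strict/quantitative margin $[u_n]\cdot\nu_{u_n} \ge c_n>0$ on each segment after a small perturbation (shifting the two traces apart along $\nu_{u_n}$ on a slightly shrunk subsegment costs arbitrarily little energy and length, by Lipschitz regularity of $u_n$ off $J_{u_n}$). By a diagonal argument it then suffices to prove the theorem for a fixed such $u_n$, i.e.\ for $u \in \mathcal{W}$-type displacements whose jump opening is bounded away from non-interpenetration.

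Next, for such a ``nice'' $u$ I would construct $y_\eps = \id + \eps u + (\text{corrector})$. The naive choice $y_\eps = \id + \eps u$ already satisfies $\nabla y_\eps^{\rm rot} = \Id + \eps\nabla u$ with no rotation needed (so all of \eqref{eq: first conditions-2}--\eqref{eq: first conditions-3.5} hold trivially with $\gamma$ arbitrary), gives $u_\eps = u$ hence $y_\eps \rightsquigarrow u$ with $E_u=\emptyset$, and satisfies the boundary condition. The energy converges: the bulk term $\eps^{-2}\int W(\Id+\eps\nabla u) \to \tfrac12\int Q(e(u))$ by the Taylor expansion of $W$ around $SO(2)$ under~\eqref{assumptions-W} (dominated convergence using $W(\mathrm F)\le C|\mathrm F-\Id|^2$ near $\Id$ and the $L^2$-bound on $\nabla u$); the second-gradient term is $\eps^{-2\beta}\eps^2\int|\nabla^2 u|^2 \to 0$ since $\nabla u$ is smooth off finitely many segments and $\beta<1$; and $\kappa\mathcal{H}^1(J_{y_\eps}) = \kappa\mathcal{H}^1(J_u)$. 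The only failure is~\eqref{e:CN}: near $J_u$ the map $\id+\eps u$ may fold, precisely where $[u]\cdot\nu_u<0$ — which our refined approximation has excluded. Where $[u]\cdot\nu_u\ge c>0$ the two sides of the crack are pushed apart, so locally $y_\eps$ is injective for $\eps$ small; one must also handle the finitely many endpoints of the segments of $J_u$, where the crack tips lie inside $\Omega$ and the structure of $y_\eps$ is that of a slit domain — there a small additional corrector (e.g.\ opening a tiny disc around each tip, or using that $u$ extends $W^{1,\infty}$ across a neighborhood of the tip) restores local injectivity at negligible energy and surface cost. Globally, injectivity across well-separated components of $J_u$ is automatic for $\eps$ small since $y_\eps \to \id$ uniformly on compact sets away from $J_u$; a covering/compactness argument on $\overline\Omega$ patches local injectivity into a.e.-global injectivity, which by the Remark after Definition~\ref{def: CN} (equivalently \cite[Proposition~2.5]{Giacomini-Ponsiglione:2008}) is~\eqref{e:CN} once $\det\nabla y_\eps = \det(\Id+\eps\nabla u)>0$ a.e., true for $\eps$ small by the $L^\infty$-type bound on $\nabla u$ off the segments. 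Finally a diagonalization in $n$ and $\eps$, using that all the convergences above are uniform along the (countably many) approximants, produces the desired single sequence $(y_\eps)_\eps$ with $y_\eps\rightsquigarrow u$, $\mathcal{E}_\eps(y_\eps)\to\mathcal{E}(u)$, and~\eqref{e:CN}.

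The main obstacle, and the place where the bulk of the technical work sits, is the refined density step: passing from the bare density theorem (Theorem~\ref{th: crismale-density2}) to approximants that additionally satisfy a strict contact condition while retaining convergence of $e(u_n)$ in $L^2$, convergence of $\mathcal{H}^1(J_{u_n})$, and the boundary datum. The point is that the Cortesani--Toader/Crismale construction produces polyhedral jump sets by an essentially local surgery, and one must verify that this surgery can be performed so that on each resulting flat piece of $J_{u_n}$ the (now Lipschitz) traces satisfy $[u_n]\cdot\nu_{u_n}\ge 0$ — using that $u$ itself satisfies~\eqref{eq: CC} $\mathcal{H}^1$-a.e.\ on $J_u$ and that $\nu_{u_n}$ approximates $\nu_u$ in an $\mathcal{H}^1$-averaged sense on corresponding pieces; an extra outward shift of the traces by an infinitesimal amount along $\nu_{u_n}$ then upgrades ``$\ge 0$'' to ``$\ge c_n$'' at vanishing cost. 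A secondary, more routine difficulty is the careful bookkeeping at crack tips and at the (finitely many) triple points/corners of the polyhedral $J_{u_n}$, where one must check that the local injectivity repair does not create new jump or spoil the energy expansion; I expect this to be handled by localizing to small balls and using the quantitative piecewise Korn--Poincar\'e estimate~\eqref{eq: kornpoinsharp2XXX} only as an a~priori bound, the actual construction being explicit.
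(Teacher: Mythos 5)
Your overall architecture (refined density $\Rightarrow$ nice approximants with an opened jump $\Rightarrow$ $y_\eps=\id+\eps u_n$ $\Rightarrow$ injectivity from the jump opening $\Rightarrow$ diagonalization) matches the paper's, but there is a genuine gap at the step you yourself identify as the crux. You assume that the refined density step can produce approximants satisfying a \emph{pointwise} strict contact condition $[u_n]\cdot\nu_{u_n}\ge c_n>0$ on \emph{all} of $J_{u_n}$. This is not what the paper's Lemma~\ref{lemma: contact} and Theorem~\ref{th: density new} deliver, and it is not achievable by the "shift the traces apart" surgery you describe: the contact condition for $u$ holds only $\mathcal{H}^1$-a.e., the Besicovitch covering and blow-up control only \emph{most} of $J_u$, the cutoff needed to localize the shift creates transition regions where no sign can be guaranteed, and the Cortesani--Toader/Crismale approximation can further distort normals and traces on a small portion of the jump. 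What one actually gets (see \eqref{eq:lemma6.1-4} and \eqref{eq: good-t2}) is a strict opening outside a \emph{bad set} $J^{\rm bad}_{v_\eps}$ of $\mathcal{H}^1$-measure at most $3\theta$ — small but in general nonempty. Your plan has no mechanism for this residual set, and on it the map $\id+\eps v_\eps$ can genuinely fold, so \eqref{e:CN} fails.

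The paper's proof hinges precisely on handling this bad set: the bad segments are enclosed in thin pairwise disjoint rectangles $R^i_\eps$ of height $\min\{\mathcal{H}^1(T^i_\eps),\sqrt{\eps}\}$, the deformation is replaced by constants $s^i_\eps$ there so that the images of the rectangles and of the complement are pairwise disjoint, and the extra perimeter $\sum_i\mathcal{H}^1(\partial R^i_\eps)\le 12\theta$ is absorbed in the diagonalization ($\theta=\theta_n\to 0$). The injectivity proof then requires a case analysis on $|x_1-x_2|$ versus $\sqrt\eps$ and on whether the connecting path must detour around a rectangle, using the quantitative bounds $\Vert v_\eps\Vert_{W^{2,\infty}}\le\eps^{\gamma-1}$ and $\gamma>\tfrac23$ so that the opening $\eps\tau/2$ beats the gradient error $l_\Gamma\eps^\gamma$. (Incidentally, with this path-based argument the crack tips you worry about need no separate corrector: a pair of nearby points whose segment crosses a jump segment is handled by the opening, and one whose segment crosses nothing is handled by the Lipschitz bound.) Without the bad-set surgery, or a proof that the bad set can be made empty — which would be a substantially stronger density result than the one available — your argument does not close.
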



\section{Structural result for blow up around jump points}
\label{sec:blow}


This section is devoted to a preliminary result needed in the proofs of  Theorems \ref{thm: main} and \ref{th: recovery}. \EEE  For $\rho >0$, we set $Q^{\pm}_\rho \coloneqq Q_\rho \cap \lbrace  \pm x \cdot e_1 >0 \rbrace$. Here and in the following, $\pm$ is a placeholder for both $+$ and $-$.

\begin{proposition}\label{prop: Korn}
Let $0 < \rho \le 1$, let $v \in GSBD^2(Q_\rho)$,  let   $\omega^+ , \omega^- \in \R^2$, \EEE and let $0<\eta \le \min\lbrace\frac{1}{7}| \omega^+-\omega^-\EEE|,\theta_0,  10^{-4}  \rbrace$, where $\theta_0$ is the constant of Proposition \ref{th: kornpoin-sharp}.  Assume that  
\begin{subequations}
\label{eq: many properties2-lemma}
\begin{align}
&   \vphantom{\int} \mathcal{H}^1\big(J_v \cap Q_\rho\big) \le    \rho (1 + \eta), \label{eq: many properties2-lemma-1}\\
 &  \vphantom{\int} \mathcal{L}^2\Big(\Big\{ x\in Q^{+}_\rho\colon \, |v -  \omega^+\EEE| > \frac{\eta}{\bar{c}_{\eta^3/2}}  \Big\}  \Big) +  \mathcal{L}^2\Big(\Big\{ x\in Q^{-}_\rho\colon \, |v -  \omega^-\EEE| > \frac{\eta }{\bar{c}_{\eta^3/2} } \Big\}  \Big)   \le \rho^2\eta^4,\label{eq: many properties2-lemma-2} \\
 &   \int_{Q_\rho} |e(v)|^2 \, {\rm d}x \le \frac{\rho \eta^2}{C^2_\eta \bar{c}_{\eta^3/2}^2}, \label{eq: many properties2-lemma-3}  
\end{align} 
\end{subequations}
\EEE where $C_\eta \ge 1$ denotes the constant of Proposition \ref{th: kornpoin-sharp} applied for $\theta= \eta$,  and $\bar{c}_{\eta^3/2} \ge 1$ \EEE denotes the constant of Lemma \ref{lemma: rigid motion} applied for  $\theta = \eta^{3}/2$.  Then there exist two  disjoint \EEE sets $D^+, D^- \subseteq Q_\rho$  such that 
\begin{subequations}
\label{eq: two case4-lemma}
 \begin{align}
&  \Vert v -  \omega^+\EEE \Vert_{L^\infty(D^+)} \le 3\eta \quad \quad \text{and} \quad \quad \Vert v -  \omega^- \EEE\Vert_{L^\infty(D^-)} \le 3\eta, \label{eq: two case4-lemma-1} \\
 &  \mathcal{H}^1\Big( \big( (\partial^* D^+ \cup \partial^* D^-) \setminus J_{v}\big) \cap Q_\rho \Big) \le  6\eta\rho.\label{eq: two case4-lemma-2}
\end{align}
\end{subequations}
 Moreover, there exist two curves $\Gamma^{\pm} \subseteq \partial^{*} D^{\pm} \cap Q_\rho$ connecting $(-\frac{\rho}{2}, \frac{\rho}{2}) \times \{ - \frac{\rho}{2}\}$ to $(-\frac{\rho}{2}, \frac{\rho}{2}) \times \{ \frac{\rho}{2}\}$. \EEE 
\end{proposition}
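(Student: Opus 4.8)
The plan is to apply the piecewise Korn--Poincar\'e inequality (Proposition~\ref{th: kornpoin-sharp}) to $v$ on the square $Q_\rho$ with the choice $\theta=\eta$, obtaining a Caccioppoli partition $Q_\rho = R \cup \bigcup_{j=1}^J P_j$ with rigid motions $(a_j)_j$. By \eqref{eq: kornpoinsharp2XXX-3} and the elastic-energy smallness \eqref{eq: many properties2-lemma-3}, each $\|v-a_j\|_{L^\infty(P_j)}$ is controlled by $C_\eta\|e(v)\|_{L^2(Q_\rho)}\le \rho^{1/2}\eta/\bar c_{\eta^3/2}$, which (after absorbing the $\rho^{1/2}$ by the lower volume bound $\mathcal L^2(P_j)\ge \rho^2\eta^3$ from \eqref{eq: kornpoinsharp2XXX-2} and rescaling) is genuinely small. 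The idea is then to classify each component $P_j$ according to which of the two values $\omega^+,\omega^-$ the rigid motion $a_j$ is close to: using \eqref{eq: many properties2-lemma-2}, a component $P_j$ of sufficiently large volume must contain a point where $|v-\omega^\pm|$ is small for one choice of sign, because otherwise $P_j$ would lie inside one of the two exceptional sets of measure $\le \rho^2\eta^4$, contradicting $\mathcal L^2(P_j)\ge \rho^2\eta^3 \gg \rho^2\eta^4$. At such a point $a_j$ is close to the corresponding $\omega^\pm$ up to the $L^\infty$-error on $P_j$; since $|\omega^+-\omega^-|\ge 7\eta$, this classification is unambiguous, and we may also invoke Lemma~\ref{lemma: rigid motion} (with $\theta=\eta^3/2$, and the volume bound $\mathcal L^2(\{|v-\omega^\pm|\le\cdot\}\cap P_j)\ge \tfrac12\mathcal L^2(P_j)$) to upgrade the pointwise closeness at a single point to an $L^\infty(P_j)$-bound, since $a_j-\omega^\pm$ is affine. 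Summing, $a_j$ is within $3\eta$ of the appropriate $\omega^\pm$ on all of $P_j$, which (combined with the $L^\infty$ bound between $v$ and $a_j$, re-tuned so the total error is $\le 3\eta$) gives \eqref{eq: two case4-lemma-1}.

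Next I would \emph{define} $D^+$ as the union of those components $P_j$ assigned to $\omega^+$ together with the part of the remainder set $R$ one wishes to attach, and $D^-$ analogously; by construction $D^+\cap D^-=\emptyset$. For \eqref{eq: two case4-lemma-2} I would estimate $\mathcal H^1\big((\partial^* D^\pm\setminus J_v)\cap Q_\rho\big)$: the new boundary created, apart from $J_v$, is a subset of $\bigcup_j(\partial^* P_j\cap Q_\rho)\setminus J_v$ together with $(\partial^* R\cap Q_\rho)\setminus J_v$, which by \eqref{eq: kornpoinsharp2XXX-1} is at most $\eta(\mathcal H^1(J_v)+\mathcal H^1(\partial Q_\rho))\le \eta(\rho(1+\eta)+4\rho)\le 6\eta\rho$ using \eqref{eq: many properties2-lemma-1} and $\eta\le 1$; one has to be slightly careful that internal interfaces between $D^+$ and $D^-$ that happen to lie on $J_v$ do not count, which is exactly why the bound is stated as a measure of the part outside $J_v$.

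For the final assertion about the connecting curves $\Gamma^\pm$, the strategy is to exploit the structure theorem for planar sets of finite perimeter (Ambrosio--Morel): $\partial^* D^\pm$ decomposes into rectifiable Jordan curves, and I would argue that $D^+$ must ``fill'' essentially the whole left half $Q^-_\rho$ and $D^-$ essentially the whole right half $Q^+_\rho$ (up to the small exceptional measure), because the complement of $D^+\cup D^-$ in $Q_\rho$ has measure at most $\mathcal L^2(R)+(\text{exceptional sets})\le \eta(\rho(1+\eta)+4\rho)^2+\rho^2\eta^4$, which is $o(\rho^2)$; hence there is a horizontal ``slab'' argument showing that for $\mathcal L^1$-a.e.\ vertical coordinate, the horizontal slice of $D^+$ reaches both the top edge $(-\rho/2,\rho/2)\times\{\rho/2\}$ and the bottom edge, forcing a component of $\partial^* D^+\cap Q_\rho$ to run from bottom to top; one then picks that Jordan arc as $\Gamma^+$, and symmetrically $\Gamma^-$.

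I expect the main obstacle to be this last step: turning the smallness of the ``bad'' set into the \emph{topological} statement that a single connected piece of $\partial^* D^\pm$ joins the bottom to the top edge of $Q_\rho$. A measure bound alone does not immediately give connectivity, so one has to combine the indecomposability/saturation machinery for planar sets of finite perimeter with a careful slicing argument (in the $e_1$ direction, to use that $\partial^*D^+$ separates the region near $\omega^+$ from the region near $\omega^-$ and that these dominate the two halves of $Q_\rho$), and then select the Jordan curve through $\operatorname{int}(\Gamma)/\operatorname{ext}(\Gamma)$. A secondary technical nuisance is the bookkeeping of the various $\eta$-dependent constants ($C_\eta$, $\bar c_{\eta^3/2}$) so that all the error terms genuinely add up to the clean bounds $3\eta$ and $6\eta\rho$; this is routine but must be done in the right order (first fix the component classification, then the $L^\infty$ bound via Lemma~\ref{lemma: rigid motion}, then the perimeter sum).
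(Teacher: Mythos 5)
Your outline for the two estimates in \eqref{eq: two case4-lemma} matches the paper's proof: apply Proposition~\ref{th: kornpoin-sharp} with $\theta=\eta$; use \eqref{eq: many properties2-lemma-2} together with the volume lower bound $\mathcal{L}^2(P_j)\ge\rho^2\eta^3$ to locate, inside each $P_j$, a set of measure $\ge \rho^2\eta^3/4$ on which $|v-\omega^\pm|\le\eta/\bar c_{\eta^3/2}$ for one fixed sign; transfer this to the rigid motion $a_j$ via \eqref{eq: kornpoinsharp2XXX-3}, upgrade it from that set to all of $Q_\rho$ by Lemma~\ref{lemma: rigid motion} (with $\theta=\eta^3/2$, $R=\sqrt2\rho/2$), and combine with \eqref{eq: kornpoinsharp2XXX-3} again to get $\|v-\omega^\pm\|_{L^\infty(P_j)}\le3\eta$; the hypothesis $|\omega^+-\omega^-|\ge 7\eta$ makes the assignment unambiguous. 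The perimeter estimate \eqref{eq: two case4-lemma-2} is then exactly the $\eta(\rho(1+\eta)+4\rho)\le 6\eta\rho$ calculation you give.

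Two corrections in the definition of $D^\pm$, however. First, do \emph{not} attach any part of $R$ to $D^\pm$: on $R$ there is no $L^\infty$-control on $v$, so including it breaks \eqref{eq: two case4-lemma-1}. The paper sets $D^\pm = \bigcup_{j\in\mathcal{J}^\pm}P_j$ with $R$ left out entirely; the perimeter budget still closes because $\partial^*R$ is already counted in \eqref{eq: kornpoinsharp2XXX-1}. Second, with $Q^\pm_\rho=Q_\rho\cap\{\pm x\cdot e_1>0\}$ and \eqref{eq: many properties2-lemma-2}, $D^+$ fills most of $Q^+_\rho$ and $D^-$ most of $Q^-_\rho$ --- not the reverse as you wrote. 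This matters for the next step because the slicing direction is tied to which half of the square each set dominates.

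The genuine gap is precisely the step you flag as the obstacle: producing $\Gamma^\pm$. Invoking the Ambrosio--Morel Jordan decomposition and an $e_1$-slicing is the right frame, but the two ingredients that make it work are missing from your sketch. (i) One needs a \emph{quantitative} slicing count: from $\mathcal{L}^2(D^\pm\cap Q^\pm_\rho)\ge\tfrac12\rho^2-O(\eta)\rho^2$ one gets that the slice of $\partial^*D^\pm$ in $Q_\rho$ is nonempty for a $(1-O(\eta))$-fraction of heights; by the coarea/area formula and the total perimeter budget \eqref{e:contradictD}, the slice has $\ge 2$ points for only a $(\tfrac12+O(\eta))$-fraction of heights; and, restricting to a thin central strip $K_\delta$ with $\delta\sim\eta$, the slice inside $K_\delta$ is nonempty for a $\ge\tfrac34$-fraction. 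Intersecting yields a positive-measure set of heights $w$ at which $\partial^*D^\pm$ meets $\{x_2=w\}\cap Q_\rho$ in \emph{exactly one} point, and that point lies in $K_\delta$. (ii) At such a point $y$ the Jordan curve $\Lambda$ of $\partial^*D^\pm$ through $y$ cannot lie entirely in $Q_\rho$ (a closed loop would cross the slice at least twice), so $\Lambda$ exits $Q_\rho$; the sub-arc $\Gamma^\pm\ni y$ whose endpoints are on $\partial Q_\rho$ must have both endpoints on the top and bottom edges by a \emph{length} argument: if an endpoint were on a lateral edge, the detour from the central point $y$ to the lateral edge and back would force $\mathcal{H}^1((\partial^*D^+\cup\partial^*D^-)\cap Q_\rho)$ above the budget $\rho+7\eta\rho$. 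Without these two quantitative points the claim about $\Gamma^\pm$ does not follow; a measure bound on $D^\pm$ alone is indeed insufficient, as you rightly suspected.
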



\begin{rem}
{\normalfont
Later in the proofs of Theorem \ref{thm: main} and Theorem \ref{th: recovery} we will show that \eqref{eq: many properties2-lemma} holds in the blow-up around jump points.
}
\end{rem}

\begin{proof}[Proof of Proposition~\ref{prop: Korn}]
We first apply Proposition \ref{th: kornpoin-sharp} to construct the sets $D^\pm$. Afterwards, we prove the properties stated in  \eqref{eq: two case4-lemma}.

\noindent \emph{Step 1:   Application of   the \EEE piecewise Korn inequality.} We start by applying Proposition \ref{th: kornpoin-sharp}  for  $v$  and for $\theta = \eta$ on the set $Q_\rho$. (Note that $\eta \le \theta_0$ by assumption.)    We obtain   a (finite) Caccioppoli partition $Q_\rho = R \cup \bigcup^{J}_{j=1} P_j$, and corresponding rigid motions $(a_j)_{j=1}^{J}$ such  that~\eqref{eq: kornpoinsharp2XXX} holds. By assumptions~\eqref{eq: many properties2-lemma-1} and~\eqref{eq: many properties2-lemma-3} \EEE and the fact that $\mathcal{H}^1(\partial Q_\rho) = 4\rho$ we get
\begin{subequations}
\label{eq: kornpoinsharp2}
\begin{align}
 &   \sum_{j=1}^{J}\mathcal{H}^1\big( (\partial^* P_j \cap Q_\rho) \setminus J_{v} \big) +\mathcal{H}^1\big( (\partial^* R \cap Q_\rho )\setminus J_{v}  \big) \le \eta(\rho(1+\eta) + 4\rho),\label{eq: kornpoinsharp2-1}\\
 & \vphantom{\sum_{i=1}^{J}} \mathcal{L}^2(R)   \le \eta (\rho(1+\eta) + 4\rho)^2, \quad \quad   \mathcal{L}^2(P_j) \ge \rho^2\eta^3  \ \ \ \text{for all $j=1,\ldots,J$},     \label{eq: kornpoinsharp2-2}\\
& \vphantom{\sum_{i=1}^{J}}  \Vert v - a_j \Vert_{L^\infty(P_j)}  \le C_{\eta} \Vert  e(v) \Vert_{L^2(Q_\rho)} \le \frac{\sqrt{\rho}\eta}{\bar{c}_{\eta^3/2}}   \ \ \ \quad  \text{for all $j=1,\ldots,J$}.\label{eq: kornpoinsharp2-3}
\end{align}
\end{subequations}\EEE
We now show that for each $j=1,\ldots,J$ we have
\begin{align}\label{eq: two case}
\Vert v -  \omega^+ \EEE\Vert_{L^\infty(P_j)} \le 3\eta \quad \quad \quad \text{or} \quad \quad \quad \Vert v -  \omega^- \EEE\Vert_{L^\infty(P_j)} \le 3\eta.
\end{align}
 In fact,  since \eqref{eq: many properties2-lemma-2} and \eqref{eq: kornpoinsharp2-2} hold and  $\eta \le \frac{1}{4}$, we find that
 \begin{equation}
 \label{e: v - v+}
 \mathcal{L}^2 \Big(\Big\{  |v -  \omega^+\EEE| \le  \frac{\eta}{\bar{c}_{\eta^3/2}} \EEE \Big\} \cap P_j \Big) \ge \frac{\rho^2 \eta^3}{  4 \EEE} \qquad \text{or} \qquad \mathcal{L}^2 \Big (\Big\{  |v-  \omega^-\EEE| \le \frac{\eta}{\bar{c}_{\eta^3/2}} \EEE \Big\} \cap P_j \Big) \ge \frac{\rho^2 \eta^3}{  4 \EEE}.
 \end{equation}
  Without loss of generality we may assume that~\eqref{e: v - v+} \EEE holds true for~$ \omega^+\EEE$, and we write $  S_j \EEE  \coloneqq \EEE \lbrace  |v -  \omega^+\EEE| \le \eta/\bar{c}_{\eta^3 /2}\rbrace \cap P_j$. By  \eqref{eq: kornpoinsharp2-3},   the assumption that \EEE $\rho \le 1$, and the triangle inequality we get $\Vert  \omega^+\EEE - a_j \Vert_{L^\infty(  S_j \EEE)}  \le \frac{ 2\eta}{\bar{c}_{\eta^3/2}} \EEE$.  By applying Lemma \ref{lemma: rigid motion} for    $\theta =\frac{\eta^3}{2}$ \EEE and $R = \frac{\sqrt{2}\rho}{2}$ we then get
\begin{align*}
\Vert a_j -  \omega^+\EEE \Vert_{L^\infty(P_j)} \le \Vert a_j -  \omega^+\EEE \Vert_{L^\infty(Q_\rho)}     \le  \bar{c}_{\eta^3/2} \EEE  \Vert  a_j -  \omega^+\EEE \Vert_{ L^\infty  (  S_j \EEE)}\le 2\eta,
\end{align*} 
where we used that $\mathcal{L}^2(  S_j \EEE) \ge \frac{\eta^3}{2} R^2$  by \eqref{e: v - v+}. \EEE Another application of  \eqref{eq: kornpoinsharp2-3} \EEE and using  $\bar{c}_{\eta^3/2}  \ge 1$ \EEE  implies \eqref{eq: two case} for $ \omega^+\EEE$. In a similar fashion, we obtain the estimate for $ \omega^-\EEE$.

Since $| \omega^+-\omega^-\EEE| \ge 7\eta$ by assumption on $\eta$, we observe that for each $P_j$ estimate \eqref{eq: two case} either holds for $ \omega^+\EEE$ or for $ \omega^-\EEE$.   We denote by $\mathcal{J}^+$ \EEE the set of indices such that \eqref{eq: two case} holds for $ \omega^+\EEE$, and set $\mathcal{J}^- = \lbrace 1,\ldots,J \rbrace \setminus \mathcal{J}^+$. We define the sets
$$D^+  \coloneqq  \bigcup_{j \in \mathcal{J}^+} \EEE P_j, \quad \quad \quad  D^-  \coloneqq  \bigcup_{j \in \mathcal{J}^-} \EEE P_j. $$

\noindent \emph{Step 2: Proof of \eqref{eq: two case4-lemma}:} We start by observing that \eqref{eq: two case} implies 
$\Vert v -   \omega^+ \EEE \Vert_{L^\infty(D^+)} \le 3\eta$ and $\Vert v -  \omega^-\EEE \Vert_{L^\infty(D^-)} \le 3\eta$, i.e., \eqref{eq: two case4-lemma-1} holds.  By  \eqref{eq: kornpoinsharp2-1} and   since $\eta \le 1$ \EEE we also find  that \eqref{eq: two case4-lemma-2} holds true. In particular, by \eqref{eq: many properties2-lemma-1} \EEE 
\begin{equation}
\label{e:contradictD}
\mathcal{H}^1 \big((\partial^{*}D^{+} \cup \partial^{*}D^{-} ) \cap Q_{\rho} \big) \leq \rho + 7 \eta \rho\,.
\end{equation}
 It remains to show the existence of the curves $\Gamma^{\pm} \subseteq \partial^{*}D^{\pm}$. \EEE First, we note \EEE that $D^\pm \supset \lbrace  |v -   \omega^\pm\EEE| \le \eta  / \bar{c}_{\eta^3/2} \EEE \rbrace   \cap  (Q_\rho \setminus R)$ by construction and therefore we find by  \eqref{eq: many properties2-lemma-2} and \eqref{eq: kornpoinsharp2-2} \EEE that
 \begin{align}\label{eq: two case3}
\mathcal{L}^2\big( D^\pm \cap Q^{\pm}_\rho   \big) \ge \frac{1}{2}\rho^2 -\rho^2\eta^4 - \mathcal{L}^2(R) \ge  \frac{1}{2}\rho^2 -\rho^2\eta^4 - 36\rho^2\eta \ge \frac{1}{2}\rho^2-  C_0   \rho^2\eta,
\end{align}
  where we set $C_0 = 100$ for notational convenience.  Thus, by \eqref{eq: two case3} we get that \EEE
\begin{equation}
\label{eq: slice-e1}
\mathcal{H}^1\Big(\big\{ w \in \Pi^{e_1}\colon \,  \mathcal{L}^1\big((D^\pm \cap Q^{\pm}_\rho)^{e_1}_w  \big) >0 \big\}\Big) \ge \rho - 2C_0\eta\rho.
\end{equation}
This in turn implies 
\begin{equation}
\label{eq: slice-e1-2}
\mathcal{H}^1\Big(\big\{ w \in \Pi^{e_1}\colon \,  \mathcal{H}^0\big((\partial^* D^\pm \cap Q_\rho)^{e_1}_w  \big) \ge 1 \big\}\Big) \ge \rho - 4C_0\eta\rho.
\end{equation}
We further claim that
\begin{equation}
\label{eq: slice-e1-4}
\mathcal{H}^1\Big(\big\{ w \in \Pi^{e_1}\colon \,  \mathcal{H}^0\big((\partial^* D^\pm \cap Q_\rho)^{e_1}_w  \big) \ge 2 \big\}\Big) \leq \frac{\rho}{2} + \frac{7}{2} \eta \rho\,.
\end{equation}
Indeed, if~\eqref{eq: slice-e1-4} were not true, by the area formula (see, e.g., \cite[Theorem 2.71]{Ambrosio-Fusco-Pallara:2000}) and by~\eqref{e:contradictD} \EEE we would have  that
\begin{align*}
\rho + 7\eta \rho & < 2 \, \mathcal{H}^1\Big ( \big \{ w \in \Pi^{e_1} \colon \,  \mathcal{H}^0 \big ( (\partial^* D^\pm \cap Q_\rho)^{e_1}_w  \big) \ge 2 \big \} \Big ) \leq \int_{\Pi^{e_{1}}}  \mathcal{H}^0\big((\partial^* D^\pm \cap Q_\rho)^{e_1}_w  \big) \, {\rm d} \mathcal{H}^{1}(w) 
\\
&
= \int_{\partial^* D^\pm \cap Q_\rho} | \nu_{D^{\pm}} \cdot e_{1} | \, {\rm d} \mathcal{H}^{1} \leq \mathcal{H}^{1} ((\partial^{*} D^{+} \cup \partial^{*}D^{-}) \cap Q_{\rho}) \leq \rho + 7\eta\rho\,,
\end{align*}
where by $\nu_{D^\pm}$ we denote the  outer unit normal of $\partial^* {D^\pm}$. Thus, \eqref{eq: slice-e1-4} holds true. \EEE Let us set $\delta \coloneqq 16C_{0} \eta$ and $K_{\delta} \coloneqq (-\frac{\delta\rho}{2}, \frac{\delta\rho}{2}) \times (-\frac{\rho}{2}, \frac{\rho}{2})$. Then, by \eqref{eq: two case3} we get
\begin{align*}
\mathcal{L}^2\big( D^\pm \cap Q^{\pm}_\rho  \cap K_{\delta} \big) & \ge  \mathcal{L}^2\big( D^\pm \cap Q^{\pm}_\rho) - \mathcal{L}^{2} (Q_{\rho}^{\pm} \setminus K_{\delta}) 
\\
&
\geq \frac{1}{2}\rho^2-  C_0   \rho^2\eta - \frac{1}{2} (1-\delta) \rho^{2} =  \frac{1}{2}\delta \rho^2-  C_0   \rho^2\eta\,.
\end{align*}
Arguing as in~\eqref{eq: two case3}--\eqref{eq: slice-e1-2} we deduce that 
\begin{equation}
\label{eq: slice-e1-3}
\mathcal{H}^1\Big(\big\{ w \in \Pi^{e_1}\colon \,  \mathcal{H}^0\big((\partial^* D^\pm \cap K_{\delta})^{e_1}_w  \big) \ge 1 \big\}\Big) \ge \rho - 4\frac{C_0\eta}{\delta}\rho = \frac{3}{4} \rho\,,
\end{equation}
where in the last equality we have used the definition of~$\delta$. Hence, combining~\eqref{eq: slice-e1-4} and~\eqref{eq: slice-e1-3} we infer that
\begin{equation}
\label{eq: slice-e1-5}
\mathcal{H}^1\Big(\big\{ w \in \Pi^{e_1}\colon \,  \mathcal{H}^0\big((\partial^* D^\pm \cap K_{\delta})^{e_1}_w  \big) =1 \quad \text{and} \quad   \mathcal{H}^0\big((\partial^* D^\pm \cap Q_\rho)^{e_1}_w  \big) =1 \big\}\Big) \geq \frac{\rho}{4} - \frac{7}{2} \eta \rho\,. 
\end{equation}
 We now prove the existence of a curve~$\Gamma^{+} \subseteq \partial^{*}D^{+}$ connecting $(-\frac{\rho}{2}, \frac{\rho}{2}) \times \{ -\frac{\rho}{2}\}$ with $(-\frac{\rho}{2}, \frac{\rho}{2}) \times \{ \frac{\rho}{2}\}$. The argument for $\Gamma^{-} \subseteq \partial^{*}D^{-}$ is the same, with a different notational realization. In view of~\eqref{eq: slice-e1-5}, we can fix $w \in (-\frac{\rho}{2} , \frac{\rho}{2})$ and $t \in (-\frac{\delta\rho}{2}, \frac{\delta\rho}{2})$ such that
\begin{equation}
\label{e:pointy}
y \coloneqq (t, w) \in \partial^{*}D^{+} \cap K_{\delta} \qquad \text{and} \qquad (s, w)  \notin \partial^{*}D^{+} \cap Q_{\rho} \quad \text{for $s \in (-\tfrac{\rho}{2},\tfrac{\rho}{2})$, \EEE $s \neq t$}.
\end{equation}
Without loss of generality, we may assume that there exists the approximate unit normal~$\nu_{D^{+}}(y)$ to~$\partial^{*}D^{+}$ in~$y$ and that $\nu_{D^{+}} (y) \cdot e_{1} \neq 0$. By~\cite[Corollary 1]{Ambrosio-Morel}, $\partial^* D^+$ can be decomposed uniquely into at most countably many  pairwise disjoint rectifiable Jordan curves. Let us denote by~$\Lambda \subseteq \partial^{*}D^{+}$ the Jordan curve containing~$y$. Then,~\eqref{e:pointy} and $\nu_{D^{+}} (y) \cdot e_{1} \neq 0$ imply \EEE that $\Lambda \cap Q_{\rho} \subsetneq \Lambda$. Thus,~$\Lambda$ must connect~$y$ to~$\partial Q_{\rho}$. Let us denote by~$\Gamma^{+} \subseteq \Lambda$ the sub-curve of $\Lambda$ containing~$y$ and intersecting~$\partial Q_{\rho}$ only in its endpoints.

We now show that such endpoints lie in~$(-\frac{\rho}{2}, \frac{\rho}{2}) \times \{ -\frac{\rho}{2}\}$ and in~$(-\frac{\rho}{2}, \frac{\rho}{2}) \times \{ \frac{\rho}{2}\}$, respectively. By contradiction, let us assume that one of the endpoints is of the form $(\frac{\rho}{2}, \overline{w})$ or $(-\frac{\rho}{2}, \overline{w})$ with $\overline{w} \in (-\frac{\rho}{2}, \frac{\rho}{2})$. Setting $|w - \overline{w}| = \zeta \rho$ for some $\zeta \in (0, 1)$, by~\eqref{eq: slice-e1-2} and by definition of~$\delta$ and $\eta$ we   estimate
\begin{align*}
\mathcal{H}^{1} ((\partial^{*}D^{+} \cup \partial^{*}D^{-}) \cap Q_{\rho}) & \geq \sqrt{ \Big( \frac{\rho(1 - \delta)}{2} \Big)^2 + | w - \overline{w}|^{2}} + (\rho - 4C_{0} \eta \rho) - | w - \overline{w}|
\\
&
=\rho \Big( \sqrt{ \frac{(1 - \delta)^{2}}{4} + \zeta^{2}} - \zeta\Big) + (\rho - 4C_{0} \eta \rho) 
\\
&
\geq \rho  \frac{(1 - \delta)^{2}}{ 4 + \sqrt{20}} + \rho - 4C_{0}\eta \rho \geq \rho  \frac{(1 - \delta)^{2}}{ 9} + \rho - 4C_{0}\eta \rho > \rho + 7\eta\rho\,,
\end{align*}
which is in contradiction to~\eqref{e:contradictD}. Hence, both endpoints of~$\Gamma^{+}$ lie  on~$(-\frac{\rho}{2}, \frac{\rho}{2}) \times \{ -\frac{\rho}{2}\}$ or on~$(-\frac{\rho}{2}, \frac{\rho}{2}) \times \{ \frac{\rho}{2}\}$. Since~\eqref{e:pointy} holds, the endpoints can not both lie on the same side. Thus, $\Gamma^{+}$ connects $(-\frac{\rho}{2}, \frac{\rho}{2}) \times \{ -\frac{\rho}{2}\}$ and $(-\frac{\rho}{2}, \frac{\rho}{2}) \times \{\frac{\rho}{2}\}$. This concludes the proof of the proposition.
\end{proof}

\section{Proof of Theorem \ref{thm: main}}\label{sec: proof}
This section is entirely devoted to the proofs of Theorems \ref{thm: main} and \ref{thm: CNCC2}. \EEE

\begin{proof}[Proof of Theorem \ref{thm: main}]
We start by noting that the  Ciarlet-Ne\v{c}as \EEE non-interpenetration condition \eqref{e:CN} along with \eqref{eq:area} implies
\begin{align}\label{eq: NC refined}
 \int_E \det \nabla y_\eps \, {\rm d}x  = \mathcal{L}^2([y_\eps(E)])
\end{align}
for all measurable sets $E \subset \Omega$.

The proof   of the theorem \EEE is performed by contradiction. We suppose that there exists a rectifiable set $J^{\rm int} \subset J_u$ with $\mathcal{H}^1(J^{\rm int})>0$ such that $[u](x) \cdot \nu_u(x) < 0$ for all $x \in J^{\rm int}$. By a careful analysis of the blow-up around a point in~$J^{\rm int}$, we will construct a sequence of subsets~$E_{\eps} \subseteq \Omega$ which violates~\eqref{eq: NC refined}, i.e., such that 
\begin{align}\label{eq: to show}
 \int_{E_\eps} \det (\nabla y_\eps) \, {\rm d}x  > \mathcal{L}^2([y_\eps(E_\eps)]).
\end{align}
The   argument \EEE is divided into several steps: in Step 1 we show by a blow-up argument that around a point in~$J^{\rm int}$ the sequence $u_{\eps} = \frac{1}{\eps} (y_{\eps} - \id)$ satisfies the assumptions~\eqref{eq: many properties2-lemma} of Proposition~\ref{prop: Korn}. In Steps 2 and 3 we estimate the two sides of~\eqref{eq: to show} separately, assuming that a sequence~$E_{\eps}  = G^+_\eps \cup G_\eps^-\EEE$ of subsets of~$\Omega$ exists such that~\eqref{eq: thickening-general} below is satisfied. The remaining part of the proof (Steps~4--6) \EEE is devoted to the construction of such  a \EEE sequence.

\noindent \emph{Step 1: Blow-up.}  Up \EEE to a translation and rotation, it is not restrictive to assume that  $0 \in J^{\rm int}$, that $\nu_u(0) = e_1$,  and that there exist $u^+,u^- \in \R^2$ with $(u^+-u^-) \cdot e_1 <0$  such that
\begin{subequations}
\label{eq: many properties1}
\begin{align}
 &  \lim_{\rho \to 0} \, \rho^{-1}\, \mathcal{H}^1\big(J_u \cap Q_\rho\big) = 1, \label{eq: many properties1-1}\\
 & \lim_{\rho \to 0} \, \rho^{-2}\Big( \mathcal{L}^2\big(\lbrace x\in Q^{+}_\rho\colon \, |u - u^+| > \epsilon  \rbrace  \big) +  \mathcal{L}^2\big(\lbrace x\in Q^{-}_\rho\colon \, |u - u^-| > \epsilon \rbrace  \big)  \Big) =0 \quad \forall \epsilon>0,\label{eq: many properties1-2} \\
 & \lim_{\rho \to 0} \, \rho^{-1} \int_{Q_\rho} |e(u)|^2 \, {\rm d}x = 0, \label{eq: many properties1-3}
\end{align}
\end{subequations}
where we recall that \EEE $Q^{\pm}_\rho = Q_\rho \cap \lbrace  \pm x \cdot  e_{1} \EEE >0 \rbrace$. Indeed,  \eqref{eq: many properties1} \EEE holds true for $\mathcal{H}^1$-a.e.\ $x \in J^{\rm int}$:   property \EEE \eqref{eq: many properties1-1} follows from the countably $\mathcal{H}^1$-rectifiability of $J^{\rm int}$,  \eqref{eq: many properties1-2} \EEE follows directly  from the definition of $J_u$, and   \eqref{eq: many properties1-3} \EEE holds due to $|e(u)|^2 \in L^1(\Omega)$. 

For convenience, we denote the direction of the jump by $\mu :=  (u^+ - u^-)/|u^+-u^-|$. Recall that  $\mu\cdot e_1 <0$. \EEE
We now pick a constant $\eta$ sufficiently small whose choice will become   clear \EEE along the proof. To this end, we first choose  $\delta \in (0,1)$ sufficiently small such that
\begin{align}\label{eq: delta}
16\delta \le |\mu \cdot e_1| \quad \quad  \text{and} \quad \quad 1 \ge (1-\delta) \sqrt{1+9\delta^2},
\end{align}
and then  choose \EEE $\lambda \in (0,1)$ sufficiently small such that 
\begin{subequations}
\label{eq: lambda}
\begin{align}
&   \text{for each $\nu \in \mathbb{S}^1$ with    $|\nu \cdot e_1| \ge 1-\lambda$, we have $\big|  |\mu \cdot e_1| -   |\mu \cdot \nu|      \big| \le \delta$,}\label{eq: lambda-1} \\
 & 2\sqrt{1 -  (1-\lambda)^{2}  } \le  \delta. \label{eq: lambda-2}
\end{align}
\end{subequations}
\EEE
For notational convenience, we indicate by $C_0$ a fixed constant with $C_0 \ge  10^3\EEE$.  Eventually, we define $\eta \in (0,1)$ such that
\begin{align}\label{eq: eta}
\eta < \min\Big\{ \theta_0,   \  \frac{|u^+-u^-||\mu\cdot e_1|}{ 16  (C_0N_2 + 1)},  \frac{1}{C_0^2}, \ \frac{\lambda\delta}{21} \Big\}, 
\end{align}
where $N_2$ denotes the dimensional constant appearing in Besicovitch's covering theorem (see, e.g.,~\cite[Theorem~2.18]{Ambrosio-Fusco-Pallara:2000}), and  $\theta_0$ denotes the constant from Proposition \ref{th: kornpoin-sharp}.

 By \cite[Theorem 11.3]{DalMaso:13} for every open subset $A$ of~$\Omega$ it holds that  \EEE
\begin{displaymath}
\liminf_{\eps \to 0}  \, \Vert e(u_\eps) \Vert^2_{L^2(A)}  \ge  \Vert  e(u)  \Vert^2_{L^2(A)}  \qquad \text{and} \qquad \liminf_{\eps \to 0} \, \mathcal{H}^{1}(J_{u_\eps} \cap A) \ge \mathcal{H}^{1}(J_{u} \cap A).
\end{displaymath}
 Therefore, hypothesis~\eqref{eq: assu-u-2}  implies
\begin{align}\label{eq: NC refined2.1}
\lim_{\eps \to 0} \,  \Vert e(u_\eps) \Vert_{L^2(A)} =  \Vert e(u) \Vert_{L^2(A)} \qquad \text{and} \qquad  \lim_{\eps \to 0} \, \mathcal{H}^{1}(J_{u_\eps}\cap A) =  \mathcal{H}^{1}(J_{u} \cap A)
\end{align}
for all $A \subset \Omega$ open with $\mathcal{H}^1(\partial A \cap J_u) = 0$. \EEE In view of  \eqref{eq: NC refined2.1} \EEE applied for $A = Q_\rho$,  of \EEE \eqref{eq: many properties1} with $\epsilon= \frac{\eta  }{  \bar{c}_{\eta^3/2} \EEE}$, and  of \EEE the fact that $u_\eps \to u$ in measure,  we can fix a particular $0<\rho \le 1$ with~$\mathcal{H}^1(\partial Q_\rho \cap J_u) = 0$  such that for all $\eps$ sufficiently small we have \EEE 
\begin{subequations}
\label{eq: many properties2}
\begin{align}
&  \vphantom{\int} \mathcal{H}^1\big(J_{u_\eps} \cap Q_\rho\big) \le    \rho (1 + \eta), \label{eq: many properties2-1}\\
&  \vphantom{\int} \mathcal{L}^2\Big(\Big\{ x\in Q^{+}_\rho\colon \, |u_\eps - u^+| >  \frac{\eta}{ \bar{c}_{\eta^3/2} } \Big\}  \Big) +  \mathcal{L}^2\Big(\Big\{ x\in Q^{-}_\rho\colon \, |u_\eps - u^-| > \frac{\eta }{ \bar{c}_{\eta^3/2}}  \Big\}  \Big)   \le \rho^2\eta^4,\label{eq: many properties2-2} \\
&   \int_{Q_\rho} |e(u_\eps)|^2 \, {\rm d}x \le \frac{\rho \eta^2}{ C^2_\eta \bar{c}_{\eta^3/2}^2}, \label{eq: many properties2-3} 
\end{align}
\end{subequations}\EEE
where $C_\eta \ge 1$ denotes the constant of Proposition \ref{th: kornpoin-sharp} applied for $\theta= \eta$,  and   $\bar{c}_{\eta^3/2} \ge 1$ \EEE denotes the constant of Lemma \ref{lemma: rigid motion} applied for  $\theta = \eta^3/2$. \EEE   In the following, without further notice, $\eps$ will always be chosen sufficiently small such that \eqref{eq: many properties2} holds. The strategy  of the proof is  to construct a measurable subset $E_\eps \subset Q_\rho$ such that~\eqref{eq: to show} holds, which \EEE is a contradiction to~\eqref{eq: NC refined}. To show~\eqref{eq: to show}, we now estimate separately  its  left- and right-hand side.\EEE

\noindent \emph{Step 2: Estimate on  the \EEE determinant.} In dimension two, a Taylor expansion implies that 
$$\big|\det(\Id + F) - (1 + {\rm tr}(F))\big| \le c_0|F|^2$$ 
for a universal constant $c_0>0$. For all measurable $E \subset Q_\rho$ this implies by  \eqref{eq: many properties2-3},   the fact that \EEE $ C_\eta \bar{c}_{\eta^3/2}  \ge 1$, \EEE and H\"older's inequality that \begin{align}\label{eq: det-eps}
 \int_{E} \det (\nabla y_\eps) \, {\rm d}x & \ge \mathcal{L}^2(E) - \int_E  \sqrt{2} \EEE \eps |e(u_\eps)| \, {\rm d}x  -  c_0\int_E \eps^2 |\nabla u_\eps|^2 \, {\rm d}x   \\ 
 & \ge \mathcal{L}^2(E) -  \sqrt{2} \EEE \eps\big(\mathcal{L}^2(E)\big)^{1/2} \Vert e(u_\eps) \Vert_{L^2(Q_\rho)} -  c_0\int_\Omega \eps^2 |\nabla u_\eps|^2 \, {\rm d}x \notag \\
 & \ge \mathcal{L}^2(E)  - \sqrt{2} \EEE\eps\rho \sqrt{\rho} \eta- c_0\int_\Omega \eps^2 |\nabla u_\eps|^2 \, {\rm d}x. \nonumber
\end{align}
Note that $\eps \int_\Omega|\nabla u_\eps|^2  \, {\rm d}x \to 0$ as $\eps \to 0$ by  \eqref{eq: assu-u-1} \EEE and the fact that $\gamma >\frac{1}{2}$. Therefore, for all $\eps>0$ sufficiently small (depending on  $\rho$ and  $\eta$),  we deduce from~\eqref{eq: det-eps} that \EEE
\begin{align}\label{eq:det}
 \int_{E} \det (\nabla y_\eps) \, {\rm d}x  \ge \mathcal{L}^2(E)  - 2\eps\rho^{3/2} \eta \ge  \mathcal{L}^2(E)  - 2\eps\rho \eta, 
\end{align}
where the last step follows from   observing that \EEE $\rho \le 1$.

 \noindent \emph{Step 3:  Estimate on   the measure of the \EEE image and conclusion.}   By $\rho \le 1$,   \EEE \eqref{eq: eta}, and   \eqref{eq: many properties2}, we can apply Proposition \ref{prop: Korn}  to \EEE $v = u_\eps$  and  $\omega^\pm=u^\pm$. \EEE We \EEE find two sets $D^\pm_\eps$ satisfying \eqref{eq: two case4-lemma} and the curves $\Gamma^\pm_\eps$. \EEE  Based on the definition of $ D^\pm_\eps$, we  construct in Steps 3--6 \EEE two disjoint sets $G^\pm_\eps \subset D^\pm_\eps$ satisfying
\begin{subequations}
\label{eq: thickening-general}
\begin{align}
 & \mathcal{L}^2\big(\lbrace x\in \R^2 \setminus G^\pm_\eps  \colon  \dist(x, G^\pm_\eps) \le 3\eta\eps \rbrace \big) \le  C_0N_2\eta\eps\rho,\label{eq: thickening-general-1}\\
 &   \mathcal{L}^2\big(  (\eps u^- +   G^-_\eps) \cup (\eps u^+ +   G^+_\eps)    \big) \le \mathcal{L}^2(G^+_\eps) + \mathcal{L}^2(G^-_\eps) - \frac{\rho}{ 8} \eps |u^+-u^-||\mu\cdot e_1|.  \label{eq: thickening-general-2} 
\end{align}
\end{subequations} \EEE
 Let us assume for the moment that such sets exist and let us \EEE explain how to conclude the contradiction.  By  \eqref{eq: two case4-lemma-1}, \EEE \eqref{eq: thickening-general}, and the fact that $G^\pm_\eps \subset D^\pm_\eps$ we find that, for $\eps$ sufficiently small, the functions $y_\eps = \id + \eps u_\eps $ satisfy
\begin{align}\label{eq: L2G}
\mathcal{L}^2& ([y_\eps(G^+_\eps \cup G^-_\eps)]) 
\\
& \le\mathcal{L}^2\Big( \lbrace x\in \R^2 \colon  \dist(x, \eps u^+ +  G^+_\eps) \le 3\eta\eps \rbrace \cup \lbrace x\in \R^2 \colon  \dist(x, \eps u^- +G^-_\eps) \le 3\eta\eps \rbrace \Big) \nonumber \\
& \le   \mathcal{L}^2\big(  (\eps u^- +   G^-_\eps) \cup (\eps u^+ +   G^+_\eps)    \big)  + 2C_0N_2\eta\eps\rho \nonumber\\
& \le \mathcal{L}^2(G^+_\eps) + \mathcal{L}^2(G^-_\eps) - \frac{\rho}{ 8 } \eps |u^+-u^-||\mu\cdot e_1| + 2C_0N_2\eta\eps\rho. \nonumber
\end{align}
On the other hand,  since~\eqref{eq:det} holds and $G^+_\eps \cap G^-_\eps = \emptyset$, we have
\begin{align}\label{eq: L2G2}
 \int_{G^+_\eps \cup G^-_\eps} \det (\nabla y_\eps) \, {\rm d}x  \ge \mathcal{L}^2(G^+_\eps) + \mathcal{L}^2(G^-_\eps)  - 2\eps\rho \eta.
\end{align}
Combining~\eqref{eq: eta} and~\eqref{eq: L2G}--\eqref{eq: L2G2} we infer that \EEE 
\begin{align*}
 \int_{G^+_\eps \cup G^-_\eps} \det (\nabla y_\eps) \, {\rm d}x &\ge \mathcal{L}^2([y_\eps(G^+_\eps \cup G^-_\eps)]) + \frac{\rho}{8 } \eps |u^+-u^-||\mu\cdot e_1| -  2C_0N_2\eta\eps\rho - 2\eps\rho \eta \\ & >  \mathcal{L}^2([y_\eps(G^+_\eps \cup G^-_\eps)]).
\end{align*}
This shows \eqref{eq: to show}  for $E_{\eps} = G^{+}_{\eps} \cup G^{-}_{\eps}$ \EEE and the argument by contradiction is concluded. To conclude the proof, it remains to give the construction of the sets $G^\pm_\eps$ and to prove  the \EEE properties \eqref{eq: thickening-general}.

 \noindent \emph{Step 4: Definition of $G_\eps^\pm$.}  By Proposition~\ref{prop: Korn} there exists a curve $\Gamma_{\eps} \subseteq \partial^{*}D_{\eps}^{+}$ which connects $(-\frac{\rho}{2}, \frac{\rho}{2}) \times \lbrace \frac{\rho}{2} \rbrace$ with $(-\frac{\rho}{2}, \frac{\rho}{2}) \times \lbrace -\frac{\rho}{2} \rbrace$. In particular, we assume that \EEE there exists a continuous curve $\gamma_\eps \colon  [a,b] \to \R^2$ with $\Gamma_\eps = \gamma_\eps([a,b])$, $\gamma_\eps((a,b)) \subset Q_\rho$, $\gamma_\eps(a) \in (-\frac{\rho}{2}, \frac{\rho}{2}) \times \lbrace \frac{\rho}{2} \rbrace$, and $\gamma_\eps(b) \in (-\frac{\rho}{2}, \frac{\rho}{2}) \times \lbrace -\frac{\rho}{2} \rbrace$. \EEE


 We define $F^+_\eps  \coloneqq \EEE {\rm Int}(\Psi_\eps)$ and $F^-_\eps  \coloneqq \EEE Q_\rho \setminus F^+_\eps$, where ${\rm Int}(\cdot)$  stands for \EEE the interior of a Jordan curve. We denote the connected components of ${\rm sat}(F^\pm_\eps \setminus D^\pm_\eps)$ by $(S^\pm_{i,\eps})_i$, where ${\rm sat}(\cdot)$  indicates \EEE the saturation of a set. Note that each of these sets is simple, i.e.,  $\partial^* S^\pm_{i,\eps}$ is equivalent to a rectifiable Jordan curve  up to an \EEE $\mathcal{H}^1$-negligible set. We define
\begin{align}\label{eq: G-def}
G^+_\eps := F^+_\eps \setminus  \bigcup_ {i}S^+_{i,\eps},  \quad \quad \quad  G_\eps^- := F^-_\eps \setminus  \bigcup_{i} S^-_{i,\eps}.  \EEE
\end{align}
By construction, we note that $\Gamma_\eps \subset \partial^* G^+_\eps \cap Q_\rho$ up to an $\mathcal{H}^1$-negligible set. Moreover, we have
\begin{subequations}
\label{eq: length bound}
\begin{align}
 &  \mathcal{H}^1(\Gamma_\eps) \le \mathcal{H}^1\big( (\partial^* G^+_\eps \cup \partial^* G^-_\eps) \cap Q_\rho \big) \le  \rho +7\rho\eta \le 2\rho, \label{eq: length bound-1} \\
 &  \mathcal{H}^1\big(\partial^* G^+_\eps \cup \partial^* G^-_\eps\big) \le 6\rho. \label{eq: length bound-2}
\end{align}
\end{subequations}\EEE
Indeed, by  \cite[Proposition 6(ii)]{Ambrosio-Morel} and by \eqref{eq: two case4-lemma-2} and \eqref{eq: many properties2-1} \EEE one can check that 
\begin{align*}
\mathcal{H}^1\big( (\partial^* G^+_\eps \cup \partial^* G^-_\eps) \cap Q_\rho \big) & \le \mathcal{H}^1\big( (\partial^* D^+_\eps \cup \partial^* D^-_\eps) \cap Q_\rho \big) \\
& \le  \mathcal{H}^1(J_{u_\eps} \cap Q_\rho) + \mathcal{H}^1\big( \big( (\partial^* D^+_\eps \cup \partial^* D^-_\eps) \setminus J_{u_\eps}\big) \cap Q_\rho \big) \le  \rho(1+7\eta).
\end{align*}
Then,  \eqref{eq: length bound-1} \EEE follows from the fact that $\Gamma_\eps \subset \partial^* G^+_\eps \cap Q_\rho$ up to an $\mathcal{H}^1$-negligible set, and the fact that $7\eta \le 1$, see \eqref{eq: eta}. To get  \eqref{eq: length bound-2}, \EEE we simply note that $\mathcal{H}^1(\partial Q_\rho) = 4\rho$.

\noindent \emph{\UUU Step 5: \EEE Proof of  \eqref{eq: thickening-general-1}.\EEE} Without  loss of generality, we show  \eqref{eq: thickening-general-1} only for $G^+_\eps$. The proof for~$G^-_\eps$ works in the same way,   up to a different notational realization. \EEE Let $\mathcal{S}^{\rm small}_\eps  \coloneqq \EEE \lbrace i \colon  \mathcal{H}^1(\partial^{\ZZZ *} S^+_{i,\eps}) \le {3\eps\eta} \rbrace$ and  $\mathcal{S}^{\rm big}_\eps  \coloneqq \EEE\lbrace i \colon  \mathcal{H}^1(\partial^* S^+_{i,\eps}) > {3\eps\eta} \rbrace$. By \eqref{eq: G-def} and the fact that $\partial^* F^+_\eps$ and $( \partial^* S^+_{i,\eps})_i$ are equivalent to rectifiable Jordan curves  we get
\begin{align}\label{eq: 6-1}
\mathcal{L}^2\big(\lbrace x\in \R^2 \setminus G^+_\eps \colon  \dist(x, G^+_\eps) \le 3\eta\eps \rbrace \big) & \le  \sum_{i \in \mathcal{S}^{\rm small}_\eps} \mathcal{L}^2(S^+_{i,\eps})  + \sum_{i \in \mathcal{S}^{\rm big}_\eps} \mathcal{L}^2\big(\lbrace x \colon  \dist(x, \partial^* S^+_{i,\eps}) \le 3\eps\eta \rbrace \big) \notag  \\ & \ \ \ + \mathcal{L}^2\big(\lbrace x \colon  \dist(x, \partial^* F^+_{\eps}) \le 3\eps\eta \rbrace \big).
\end{align}
We now estimate the terms  on the right-hand side of~\eqref{eq: 6-1} \EEE separately.  For the first term, \EEE we recall the definition of $\mathcal{S}^{\rm small}_\eps$ and use 
the isoperimetric inequality to get
\begin{align}\label{eq: iso}
 \sum_{i \in \mathcal{S}^{\rm small}_\eps} \EEE \mathcal{L}^2(S^+_{i,\eps}) \le \frac{1}{4\pi}  \sum_{i \in \mathcal{S}^{\rm small}_\eps} \EEE \big(\mathcal{H}^1(\partial^* S^+_{i,\eps})\big)^{2} \le \frac{{3\eps\eta}}{4\pi}  \sum_{i \in \mathcal{S}^{\rm small}_\eps} \EEE \mathcal{H}^1(\partial^* S^+_{i,\eps}).  
\end{align}
For the other two terms, we show that  
\begin{subequations}
\label{eq: iso2}
\begin{align}
 \mathcal{L}^2\big(\lbrace x \colon  \dist(x, \partial^* S^+_{i,\eps}) \le 3\eps\eta \rbrace \big)  \le 40 \EEE N_2\eps\eta \, \mathcal{H}^1 (\partial^* {S}^+_{i,\eps}),  \label{eq: iso2-1} \\  \mathcal{L}^2\big(\lbrace x \colon  \dist(x, \partial^* F^+_{\eps}) \le 3\eps\eta \rbrace \big) \le 40 \EEE N_2\eps\eta  \,  \mathcal{H}^1 (\partial^* F^+_{\eps}), \label{eq: iso2-2}
\end{align}
\end{subequations}\EEE
where $N_2$ denotes the constant in the Besicovitch covering theorem. 
We first perform the proof for the sets $S^+_{i,\eps}$. For notational simplicity, we set $\tilde{S}^+_{i,\eps}  \coloneqq \EEE \lbrace x \colon  \dist(x, \partial^* S^+_{i,\eps}) \le {3\eps\eta} \rbrace$. We cover $\tilde{S}^+_{i,\eps}$ by balls $B_{ 6\EEE \eps\eta}(x)$ with $x \in \partial^* {S}^+_{i,\eps}$.  In particular, since  $\partial^* {S}^+_{i,\eps}$  is a rectifiable Jordan curve and $i \in \mathcal{S}^{\rm big}_\eps$ we have that 
\begin{equation}\label{eq: smth}
\mathcal{H}^1( \partial^* {S}^+_{i,\eps} \cap  B_{3\eps\eta}(x)) \ge {3\eps\eta}.
\end{equation}\EEE 
Then, by the Besicovitch covering theorem  there exists a countable subcollection of balls $B_{3\eps\eta}(x)$, $x \in \mathcal{X}_{i,\eps}$, which cover $ \partial^* \EEE \tilde{S}^+_{i,\eps}$ up to an  $\mathcal{H}^1$-negligible \EEE set such that each $y \in \R^2$ is contained in at most~$N_2$ different balls. Clearly, $B_{6\eps\eta}(x)$, $x \in \mathcal{X}_{i,\eps}$, then covers $\tilde{S}^+_{i,\eps}$ up to a set of negligible $\mathcal{L}^2$-measure. \EEE Therefore,  by~\eqref{eq: smth} \EEE we compute 
\begin{align*}
\mathcal{L}^2\big(\tilde{S}^+_{i,\eps}\big) \le \sum_{x \in \mathcal{X}_{i,\eps}} \EEE \mathcal{L}^2(B_{ 6 \EEE \eps\eta}(x)) \le  \sum_{x \in \mathcal{X}_{i,\eps}} \EEE  12 \EEE \pi{\eps\eta} \mathcal{H}^1(B_{3\eps\eta}(x)  \cap \partial^* {S}^+_{i,\eps}) \le  12 \EEE N_2\pi{\eps\eta} \mathcal{H}^1 (\partial^* {S}^+_{i,\eps}). 
\end{align*} 
This shows \eqref{eq: iso2} for the sets $S^+_{i,\eps}$. The proof  of~\eqref{eq: iso2-2} for the set $F^+_\eps$ works in the same way once we notice that $\mathcal{H}^1(\partial^* F_\eps^+)  \geq\rho\EEE> {3\eps\eta}$ for $\eps$ sufficiently small.\EEE 

Eventually, we conclude the proof of  \eqref{eq: thickening-general-1} \EEE by combining   \eqref{eq: length bound}--\eqref{eq: iso2}. \EEE

\noindent \emph{\UUU Step 6: \EEE Proof of  \eqref{eq: thickening-general-2}.\EEE} We recall that $\mu =  (u^+ - u^-)/|u^+-u^-|$ satisfies $\mu \cdot e_1 < 0$ and let $\tau:= |u^+-u^-|$ for brevity. We also write $\Lambda_\eps := (\partial^* G^+_\eps \cup \partial^* G^-_\eps) \cap Q_\rho$ and recall  that \EEE the curve $\Gamma_\eps$ (without its endpoints) is contained in $\Lambda_\eps$.  We also recall the notation in \eqref{eq: slicing-not}--\eqref{eq: slicing-not2}.  The main point of this step is to prove the estimate
\begin{align}\label{eq: main7}
\mathcal{H}^1\big( V^\mu_\eps \big) \ge  \frac{\rho}{ 8 } |\mu \cdot e_1|, \ \text{where } V^\mu_\eps:=\big\{ w \in \Pi^\mu\colon  \mathcal{H}^0( (\Gamma_\eps \cap Q_{\rho-2\tau\eps})^\mu_w) = 1 \text{ and } \mathcal{H}^0(( \Lambda_\ep)^\mu_w) = 1  \big\}. 
\end{align}
The set $V^\mu_\eps$ (see Figure~\ref{f:6}) corresponds to the vectors $w \in \Pi^\mu$ such that there exists a unique $t_w \in \R$ with   $w + t\mu \in \Gamma_\eps$, $w + t\mu \in G^+_\eps$ for $t \in (t_w-\tau\eps ,t_w)$, and  $w +  t_w \EEE \mu \in G^-_\eps$ for $t \in (t_w,t_w+\tau\eps)$. This estimate implies  \eqref{eq: thickening-general-2}. \EEE In fact, for $w \in \Pi^\mu$, the two sets
$$ \lbrace    w + t\mu + \eps u^+   \colon   t\in (t_w-\tau\eps ,t_w) \rbrace \quad \text{ and } \quad  \lbrace    w + t\mu + \eps u^-  \colon   t\in (t_w ,t_w + \tau\eps) \rbrace   $$
coincide, where we used that $u^+-u^- = \tau \mu$. This implies 
\begin{align*}
\mathcal{L}^2\big(  (\eps u^- +   G^-_\eps) \cup (\eps u^+ +   G^+_\eps)    \big) \le \mathcal{L}^2(G^+_\eps) + \mathcal{L}^2(G^-_\eps) - \eps \tau \mathcal{H}^1(V^\mu_\eps)\,.  
\end{align*}

\begin{figure}[h!]
\begin{tikzpicture}
\draw[black, very thick] (-0.6,-0.6) rectangle (4,4);
\draw[black, thick] (0, 0) rectangle (3.4, 3.4);
\draw[black, very thick] (3, -3) -- (7, 1);
\draw[black, very thick, ->] ( 6.5, 0.5) -- (5.8, 1.2);
\node at (6, 1.4) {\large{$\mu$}};
\node at (2.7, -2.8) {\large{$\Pi^{\mu}$}};
\node at (4, -0.9) {\large{$Q_{\rho}$}};
\node at (3.3, -0.3) {\large{$Q_{\rho - 2\tau\eps}$}};
\draw[thick, smooth] plot coordinates
            {
                (1.7, -0.6)
                (1.9, -0.3)
                (1.6, 0.4)
                (1.8, 0.9)
                (2, 1.5)
                (1.75, 1.9)
                (2, 2.4)
                (1.9, 2.85)
                (1.5, 3.3)
                (1.7, 4) 
            };
            \node at (2.2, 2) {\large{$\Gamma_{\eps}$}};
\draw[thick, smooth] plot coordinates
{ (2.4, 3.2) (3.2, 3.3) (2.6, 2.6) (2.4, 3.2)};
\node at (3.1, 2.6) {\large{$\Theta_{\eps}$}};
 \draw[black, thick] (0.8,0.5) ellipse (6mm and 2mm);
 \node at (0.43, 0.93) {\large{$\Theta_{\eps}$}};
 \draw[black] (3.965,-2.035) -- (-1.035,  2.965);
 \draw[black] (5.455, -0.545) -- (0.455, 4.455);
 \draw [decorate,decoration={brace, amplitude=5pt}, xshift= 3pt,yshift= -3pt]
(5.455, -0.545) -- (3.965,-2.035)  node [black,midway,yshift=-0.4cm, xshift= 0.4cm] { \large{$V^{\mu}_{\eps}$}};
\end{tikzpicture}
\caption{Visualization of $V^{\mu}_{\eps}$. Here, $\Theta_\eps = \Lambda_\eps \setminus \Gamma_\eps$.} \label{f:6}
\end{figure}


We are hence left to prove  \eqref{eq: main7}. To this end, we recall the definition of $\delta$ and $\lambda$ in \eqref{eq: delta}--\eqref{eq: lambda}. We define $\Gamma'_\eps:= \lbrace x\in \Gamma_\eps\colon \, |\nu_{\Gamma_\eps}(x) \cdot e_1| \ge 1 - \lambda \rbrace$, where $\nu_{\Gamma_\eps}$ denotes \ZZZ a \EEE unit normal vector of the curve $\Gamma_\eps$. We start by observing that 
\begin{align}\label{eq: gamma'}
\mathcal{H}^1\big(\Gamma_\eps \setminus \Gamma'_\eps\big) \le 7\rho\eta/\lambda \le \rho \delta.
\end{align}
Indeed, by the area formula,  by \eqref{eq: length bound-1}, and by \EEE the fact that $\Gamma_\eps$ connects $(-\frac{\rho}{2}, \frac{\rho}{2}) \times \lbrace \frac{\rho}{2} \rbrace$ with $(-\frac{\rho}{2}, \frac{\rho}{2}) \times \lbrace -\frac{\rho}{2} \rbrace$ we calculate
\begin{align*}
\rho &\le \int_{\Pi^{e_1}}  \mathcal{H}^0((\Gamma_\eps)^{e_1}_w)      \, {\rm d}\mathcal{H}^1(w) =     \int_{\Gamma_\eps} |\nu_{\Gamma_\eps} \cdot e_1| \, {\rm d}\mathcal{H}^1 \le \mathcal{H}^1(\Gamma_\eps')+ (1-\lambda)\mathcal{H}^1(\Gamma_\eps \setminus \Gamma'_\eps) \\ &  = \mathcal{H}^1(\Gamma_\eps) - \lambda\mathcal{H}^1(\Gamma_\eps \setminus \Gamma'_\eps)  \le \rho +7\rho\eta  -  \lambda\mathcal{H}^1(\Gamma_\eps \setminus \Gamma'_\eps).
\end{align*}
This along with $7\eta/\lambda \le \delta$ (see \eqref{eq: eta}) shows \eqref{eq: gamma'}. Now, \eqref{eq: gamma'} and the definition of $\Gamma'_\eps$ particularly imply that
\begin{align}\label{eq: supi}
\sup\nolimits_{x_1,x_2 \in \Gamma_\eps} |(x_1 - x_2) \cdot e_1  | \le \mathcal{H}^1(\Gamma'_\eps) \sqrt{1 - \UUU (1-\lambda)^{2} \EEE} + \mathcal{H}^1(\Gamma_\eps \setminus \Gamma'_\eps) \le 2\delta\rho, 
\end{align}
where in the last step we also used  \eqref{eq: lambda-2} and \eqref{eq: length bound-1}. \EEE Thus, we can choose two points $z^\eps_+,z^\eps_- \in \Gamma_\eps$ with $z_\pm^\eps \cdot e_2 = \pm (\frac{\rho}{2} - \tau \eps)$ such that the segment connecting $z^\eps_+$ and $z^\eps_-$, denoted by $\sigma^\eps$, satisfies $\pi_\mu(\sigma^\eps) \YYY \le \EEE \pi_\mu(\Gamma_\eps \cap Q_{\rho-2\tau\eps})$. (Recall notation \eqref{eq: slicing-not2}.) Clearly, $\rho - 2\tau\eps \le \mathcal{H}^1(\sigma^\eps) \le \mathcal{H}^1(\Gamma_\eps) \le  \rho + 7\eta\rho$ by  \eqref{eq: length bound-1} \EEE and $\frac{|\nu_{\sigma^\eps} \cdot e_2|}{|\nu_{\sigma^\eps} \cdot e_1|} \le \frac{2\delta \rho}{\rho-2\tau\eps } \le 3\delta$ by  \eqref{eq: supi}, provided that $\eps$ is small enough. By \eqref{eq: delta},  the latter also yields $|\nu_{\sigma^\eps} \cdot e_1| \ge  \frac{1}{\sqrt{1+9\delta^2}} \ge 1-\delta$.
For $\eps$ sufficiently small, this gives
\begin{align}\label{eq: lasti}
\mathcal{L}^1\big( \pi_\mu(\Gamma_\eps \cap Q_{\rho-2\tau\eps}) \big) & \YYY \ge \EEE \mathcal{L}^1\big( \pi_\mu( \sigma^\eps) \big) = \mathcal{L}^1(\sigma^\eps) |\mu \cdot \nu_{\sigma^\eps}| \ge (\rho - 2\tau\eps) \big( |\mu \cdot e_1| |\nu_{\sigma^\eps} \cdot e_1| - |\nu_{\sigma^\eps} \cdot e_2|         \big)\notag \\&\ge  (\rho - 2\tau\eps) |\nu_{\sigma^\eps} \cdot e_1| \big( |\mu \cdot e_1|  - 3\delta        \big) \ge \rho |\mu \cdot e_1| -4\rho\delta \ge \frac{3\rho}{4} |\mu \cdot e_1|,
\end{align}
 where the last step follows from \eqref{eq: delta}. By  \eqref{eq: length bound-1}, \EEE \eqref{eq: gamma'},   the inclusion \EEE $\Gamma_\eps \subset \Lambda_\eps$  (up to the endpoints), and the fact that $\mathcal{H}^1(\Gamma_\eps) \ge \rho$ we find $\mathcal{H}^1(\Lambda_\eps \setminus \Gamma'_\eps) \le (7\eta+\delta)\rho$, where  $\Lambda_\eps = (\partial^* G^+_\eps \cup \partial^* G^-_\eps) \cap Q_\rho$. Recalling again the definition of $\Gamma'_\eps$ and using $\mathcal{H}^1(\Gamma_\eps') \le \rho + 7\eta\rho$ (see  \eqref{eq: length bound-1}), \EEE  we get by  \eqref{eq: lambda-1} \EEE and  the area formula
\begin{align}\label{eq: lasti2}
\int_{\Pi^{\mu}}  \mathcal{H}^0\big( (\Lambda_\eps)^{\mu}_w \big)      \, {\rm d}\mathcal{H}^1(w) & =     \int_{\Lambda_\eps} |\nu_{\Lambda_\eps} \cdot \mu| \, {\rm d}\mathcal{H}^1 \le  (|\mu\cdot e_1| +  \delta) \mathcal{H}^1(\Gamma'_\eps)     +                  \mathcal{H}^1(\Lambda_\eps \setminus \Gamma'_\eps) \notag \\
&   \le \rho |\mu \cdot e_1| + 21\eta\rho+ 2\rho \delta \le \rho |\mu \cdot e_1| + 4\rho\delta \le  \frac{5\rho}{4} |\mu \cdot e_1|,
\end{align} 
where in the last steps we used \eqref{eq: delta} and \eqref{eq: eta}. Here, $\nu_{\Lambda_\eps}$ denotes \ZZZ  a \EEE unit normal of $\Lambda_\eps$. Consequently, by \eqref{eq: lasti}--\eqref{eq: lasti2} and the fact that $\Gamma_\eps \subset \Lambda_\eps$ (up to the endpoints) we conclude
\begin{align*}
\mathcal{L}^1\Big( \big\{ w\colon  \mathcal{H}^0( (\Gamma_\eps \cap Q_{\rho-2\tau\eps})^\mu_w) = 1 \text{ and } \mathcal{H}^0( (\Lambda_\eps)^\mu_w) = 1  \big\}  \Big) \ge  \frac{\rho}{ 8 } |\mu \cdot e_1|.
\end{align*}
This shows \eqref{eq: main7} and concludes the proof  of the theorem. \EEE 
   \end{proof}

\begin{proof}[Proof of Theorem \ref{thm: CNCC2}]
As \eqref{eq: CC} is a local condition, it is enough to prove the statement on any Lipschitz set $\Omega_u \subset \subset \Omega' \setminus E_u$ with $\mathcal{H}^1(J_u \cap \partial \Omega_u) = 0$. 
In view of Theorem \ref{thm: main} (applied on $\Omega_u$) and Definition \ref{def:conv}, in particular \eqref{eq: first conditions-3}, it suffices to prove that \eqref{eq: assu-u-2} holds for the rescaled displacements $u_\eps$ defined in \eqref{eq: modifica} (for $\Omega_u$ in place of $\Omega'$). To this end, we recall that in the proof of the $\Gamma$-liminf inequality in \cite{higherordergriffith}, see particularly \cite[(4.16)ff.]{higherordergriffith}, it has been shown that
\begin{align*}
\liminf_{\eps \to 0} \frac{1}{\eps^2} \int_{\Omega'} W(\nabla y_\eps)  \,{\rm d} x  \ge \liminf_{\eps \to 0} \int_{\Omega'} \chi_\eps \frac{1}{2}Q(e(u_\eps))  \,{\rm d} x  \ge   \int_{\Omega'}  \frac{1}{2}Q(e(u)) \,{\rm d} x \,,
\end{align*}
 where $(\chi_\eps)_\eps$ is a sequence of indicator functions satisfying $\chi_\eps \to 1$ in measure on  $\Omega'$. More specifically, 
$\chi_\eps(x) = \chi_{[0,\eta_\eps)}(|\nabla u_\eps|(x))$ for a sequence $\eta_\eps \to  + \infty $ with $\eps^{1-\gamma}\eta_\eps \to +\infty$.  By the same argument, for any open $A \subset \Omega'$ and any second sequence $(\bar{\chi}_\eps)_\eps$ of indicator functions with  $\bar{\chi}_\eps \to 1$ in measure on  $A$ it still holds
\begin{align}\label{eq: newproof1}
\liminf_{\eps \to 0} \frac{1}{\eps^2} \int_{A} \bar{\chi}_\eps W(\nabla y_\eps) \,{ d} x \EEE \ge \liminf_{\eps \to 0} \int_{A} \chi_\eps \bar{\chi}_\eps \frac{1}{2}Q(e(u_\eps))  \,{\rm d} x \EEE \ge   \int_{A}  \frac{1}{2}Q(e(u)) \,{\rm d} x \EEE\,.
\end{align}
By \eqref{eq: newproof1} for $\bar{\chi}_\eps  \equiv 1$ and $A \in \lbrace \Omega_u, \Omega' \setminus \overline{\Omega_u} \rbrace$, since  $\mathcal{E}_\eps(y_\eps) \to \mathcal{E}(u)$, and  $\mathcal{H}^1(J_u \cap \partial \Omega_u) = 0$, by~\eqref{eq: the main convergence-3} we have 
\begin{align}\label{eq: contraaa}
 \lim_{\eps \to 0}\frac{1}{\eps^2} \int_{\Omega_u} W(\nabla y_\eps)  \,{\rm d} x \EEE = \int_{\Omega_u}  \frac{1}{2}Q(e(u))\, {\rm d} x \EEE\,, \quad \quad \lim_{\eps \to 0} \mathcal{H}^1(J_{u_\eps} \cap \Omega_u) = \mathcal{H}^1(J_u \cap \Omega_u). 
 \end{align}
 It remains to show $\Vert e(u_\eps) \Vert_{L^2(\Omega_u)} \to \Vert e(u) \Vert_{L^2(\Omega_u)}$. To see this, we will use an argument based on equiintegrability, related to the one in \cite[Proof of Theorem 2.3]{Schmidt:08}.
 
As a preliminary step, we check that  the sequence $g_\eps\colon \Omega_u \to \R$ given by $g_\eps := \frac{1}{\eps^2} \dist^2(\nabla y_\eps,SO(2))$ is equiintegrable. In fact, if the statement were wrong, we would get
$$\lim_{M \to \infty} \limsup_{\eps \to 0} \int_{\lbrace g_\eps > M \rbrace } g_\eps  \,{\rm d} x   \ge \kappa $$
for some $\kappa >0$. Then, by a diagonal argument we can choose a sequence $(M_\eps)_\eps$ with $M_\eps \to +\infty$ such that
\begin{align}\label{eq: kappa}
\liminf_{\eps \to 0} \int_{\lbrace g_\eps > M_\eps \rbrace } g_\eps  \,{\rm d} x \EEE \ge \kappa. 
\end{align}
Define $\bar{\chi}_\eps := \chi_{\lbrace g_\eps \le M_\eps\rbrace }$, and note that  $\bar{\chi}_\eps \to 1$ in measure on  $\Omega_u$ by $\sup_{\eps > 0} \mathcal{E}_\eps(y_\eps) <+\infty$, \eqref{assumptions-W-3}, and $M_\eps \to + \infty$. Thus, by using  \eqref{assumptions-W-3}, \eqref{eq: newproof1} for $A= \Omega_u$, and \eqref{eq: kappa} we calculate
\begin{align*}
\liminf_{\eps \to 0} \frac{1}{\eps^2} \int_{\Omega_u} W(\nabla y_\eps) \,{\rm d} x \EEE & \ge \liminf_{\eps \to 0}  \Big(\frac{1}{\eps^2} \int_{\Omega_u} \bar{\chi}_\eps W(\nabla y_\eps) \,{\rm d} x \EEE + \int_{\Omega_u} (1-\bar{\chi}_\eps) cg_\eps  \,{\rm d} x \EEE \Big)
\\
&
 \ge  \int_{\Omega_u}  \frac{1}{2}Q(e(u)) \,{\rm d} x \EEE + c\kappa.
\end{align*}
This, however, contradicts \eqref{eq: contraaa}, and shows that $g_\eps$ is equiintegrable on $\Omega_u$.

Next, we show that $|e(u_\eps)|^2$ is equiintegrable on $\Omega_u$. To this end, by using the linearization formula $|{\rm sym}(F -\Id)| =  \dist(F,SO(d)) + {\rm O} (|F- \Id|^2)$ (see \cite[(4.12)]{higherordergriffith}) and \eqref{eq: first conditions-3.5},  we get for each $x \in \Omega_u$ satisfying $\dist(\nabla y_\eps(x),SO(2)) \le 1$ that 
\begin{align*}
|e(u_\eps)(x)|^2 &\le Cg_\eps(x) + C\eps^{-2}|\nabla y^{\rm rot}_\eps(x) - \Id|^4 \le Cg_\eps(x) + C\eps^{-2}\big( \eps^{4\gamma} + \dist^4(\nabla y_\eps(x),SO(2)) \big) \\&\le  Cg_\eps(x) + C\eps^{-2}\dist^2(\nabla y_\eps(x),SO(2)) + C \le C g_\eps(x) + C, 
\end{align*}
where we used $\gamma \ge \frac{1}{2}$. On the other hand, if $\dist(\nabla y_\eps(x),SO(2)) > 1$, we easily find
$$|e(u_\eps)(x)|^2 \le \eps^{-2}  |\nabla y^{\rm rot}_\eps(x) - \Id|^{2}  \le C\eps^{-2}\dist^2(\nabla y_\eps(x),SO(2)) = Cg_\eps$$
for a sufficiently large universal constant $C>0$. Combining both estimates, we get that $|e(u_\eps)|^2$ is equiintegrable since $g_\eps$ is equiintegrable. 

Moreover, \eqref{eq: first conditions-3} implies 
\begin{align}\label{eq: smalliseti}
\lim_{\eps\to 0 }\mathcal{L}^2(\lbrace |\nabla u_\eps| \ge \eta_\eps \rbrace \cap \Omega_u ) = \lim_{\eps\to 0 }\mathcal{L}^2(\lbrace |\nabla y^{\rm rot}_\eps- \Id| \ge \eps \eta_\eps \rbrace \cap \Omega_u )   = 0,
\end{align}
where we used that  $\eps^{1-\gamma}\eta_\eps \to +\infty$.  We now conclude as follows. By \eqref{eq: newproof1} for $A= \Omega_u$ and $\bar{\chi}_\eps \equiv 1$,  by the equiintegrability of $|e(u_\eps)|^2$, and \eqref{eq: smalliseti} we get 
$$\liminf_{\eps \to 0} \frac{1}{\eps^2} \int_{\Omega_u}   W(\nabla y_\eps)  \,{\rm d} x \EEE\ge \liminf_{\eps \to 0} \int_{\Omega_u \cap \lbrace |\nabla u_\eps| < \eta_\eps \rbrace }  \frac{1}{2}Q(e(u_\eps)) \,{\rm d} x \EEE = \liminf_{\eps \to 0} \int_{\Omega_u}  \frac{1}{2}Q(e(u_\eps)) \,{\rm d} x \EEE\,. $$
This along with \eqref{eq: the main convergence-2}, \eqref{eq: contraaa}, and the fact that $Q$ is positive definite on $\R^{2 \times 2}_{\rm sym}$ implies 
$$\int_{\Omega_u}  \frac{1}{2}Q(e(u))  \,{\rm d} x \EEE=  \lim_{\eps \to 0}\frac{1}{\eps^2} \int_{\Omega_u} W(\nabla y_\eps)  \,{\rm d} x \EEE\ge  \liminf_{\eps \to 0} \int_{\Omega_u}  \frac{1}{2}Q(e(u_\eps))  \,{\rm d} x \EEE \ge\int_{\Omega_u}  \frac{1}{2}Q(e(u)) \,{\rm d} x \EEE\,.$$
This yields convergence of the linearized energies which together with weak convergence shows the strong convergence $\Vert e(u_\eps) \Vert_{L^2(\Omega_u)} \to \Vert e(u) \Vert_{L^2(\Omega_u)}$. This concludes the proof. 
\end{proof}

\EEE

\section{Proof of Theorem \ref{th: recovery}}
\label{s:th: recovery}

This section is devoted to the proof of Theorem \ref{th: recovery}. We start by a preliminary   approximation \EEE result which allows us to strengthen the contact condition. 

\begin{lemma}[Stronger contact condition]\label{lemma: contact}
Let $\Omega \subset \Omega' \subset \R^2$ be bounded Lipschitz domains  satisfying~\eqref{eq: density-condition2}--\eqref{eq: density-condition}. \EEE Given $ h \EEE \in W^{r,\infty}(\Omega;\R^2)$ for $r \in \N$, let $u \in GSBD^2_{ h \EEE}(\Omega')$  satisfy \EEE \eqref{eq: CC}. Then, there exist sequences $(\tau_n)_n$  in \EEE $(0,+\infty)$ and   $(u_n)_n$  in \EEE $GSBD^2_{ h } (\Omega')$ such that
\begin{subequations}
\label{eq:lemma6.1}
\begin{align}
& \vphantom{\lim_{n \to \infty}} \ u_n \to  u  \text{ in measure on } \Omega',\label{eq:lemma6.1-1}\\
 &  \lim_{n \to \infty} \, \Vert e(u_n) - e(u) \Vert_{L^2(\Omega')}  = \EEE 0,\label{eq:lemma6.1-2}\\
 &   \lim_{n \to \infty} \, \mathcal{H}^{1}(J_{u_n})  = \EEE \mathcal{H}^{1}(J_u),\label{eq:lemma6.1-3}\\
 &  \lim_{n \to \infty} \, \mathcal{H}^1\big(\lbrace x \in J_{u_n}\colon \,   [u_n](x) \cdot \nu_{ u_{n} \EEE} (x) \le 2\tau_n \rbrace \big) = 0. \label{eq:lemma6.1-4}
\end{align}
\end{subequations}
\end{lemma}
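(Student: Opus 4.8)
The plan is to combine the density result of Theorem~\ref{th: crismale-density2} with a ``crack--opening'' perturbation. First, apply Theorem~\ref{th: crismale-density2} to $u$ to obtain $v_n \in SBV^2(\Omega;\R^2) \cap GSBD^2_h(\Omega')$ with polyhedral jump set $J_{v_n} = \bigcup_{j=1}^{m_n} S^n_j$ (pairwise disjoint closed segments contained in some $\Omega_n \subset\subset \Omega$, with $v_n \in W^{r,\infty}$ off $J_{v_n}$) such that $v_n \to u$ in measure, $\Vert e(v_n) - e(u)\Vert_{L^2(\Omega')} \to 0$ and $\mathcal{H}^1(J_{v_n}) \to \mathcal{H}^1(J_u)$. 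The essential additional input — which must be read off the \emph{construction} behind Theorem~\ref{th: crismale-density2} rather than off its statement — is that, since $u$ satisfies \eqref{eq: CC}, the approximants can be chosen so that for every $\epsilon>0$
\begin{equation}\label{eq:plan-approxCC}
\mathcal{H}^1\big( \{ x \in J_{v_n} \colon \, [v_n](x) \cdot \nu_{v_n}(x) < - \epsilon \,\} \big) \xrightarrow[n\to\infty]{} 0 .
\end{equation}
By a diagonal argument one then finds $\omega_n \downarrow 0$ with $\mathcal{H}^1(Z_n) \to 0$, where $Z_n \coloneqq \{ x \in J_{v_n} \colon [v_n](x)\cdot\nu_{v_n}(x) < -\omega_n\}$; choosing the approximating sequence suitably (exploiting that a $C^1$ arc is approximated by $m$ inscribed segments with angular defect of order $1/m$) one may in addition arrange the ``crack--tip budget'' $\omega_n^2 m_n$ to be arbitrarily small.

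\textbf{The perturbation.} Fix $n$; let $\nu_j$ be a unit normal to $S^n_j$, let $x_j \in S^n_j$, and pick parameters $0 < \rho_n \ll a_n \ll \min_j \mathcal{H}^1(S^n_j)$ (all tending to $0$). Let $\psi^n_j \in \mathrm{Lip}(\R^2;[0,1])$ be equal to $1$ on the part of the tube $\{\dist(\cdot,S^n_j) < \rho_n\}$ at distance $> a_n$ along $S^n_j$ from the endpoints of $S^n_j$, and vanish near those endpoints and outside $\{\dist(\cdot,S^n_j) < 2\rho_n\}$, with $\Vert \nabla\psi^n_j\Vert_\infty \le C/\rho_n$. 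Set
\begin{equation*}
\phi_n \coloneqq c_n \sum_{j=1}^{m_n} \operatorname{sgn}\!\big((x-x_j)\cdot\nu_j\big)\,\psi^n_j(x)\,\nu_j, \qquad u_n \coloneqq v_n + \phi_n, \qquad c_n \coloneqq \omega_n, \qquad \tau_n \coloneqq \tfrac14 \omega_n .
\end{equation*}
Since $\phi_n$ is Lipschitz off $J_{v_n}$ and supported in thin tubes around $J_{v_n} \subset\subset \Omega$, for $\rho_n$ small (depending on the fixed $v_n$) the tubes are disjoint, $u_n = h$ on $\Omega'\setminus\overline\Omega$, $u_n \in GSBD^2_h(\Omega')$, and $J_{u_n} \subseteq J_{v_n}$. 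On each $S^n_j$ one has $[\phi_n]\cdot\nu_j = 2c_n\psi^n_j \ge 0$, while $e(\phi_n)$ is supported where $\nabla\psi^n_j\neq 0$, so $\Vert e(\phi_n)\Vert_{L^2(\Omega')}^2 \le C c_n^2\big( \mathcal{H}^1(J_{v_n})/\rho_n + m_n a_n/\rho_n \big)$, which can be made $\le 1/n$ for the fixed $n$ by choosing $\rho_n$ small — compatibly with $a_n \to 0$ and $m_n a_n \to 0$ precisely because $\omega_n^2 m_n$ is arbitrarily small; moreover $\phi_n \to 0$ in $L^1(\Omega')$ because it lives on a set of vanishing measure. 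This yields \eqref{eq:lemma6.1-1}--\eqref{eq:lemma6.1-2}. For \eqref{eq:lemma6.1-4}: at $x \in J_{u_n} \subseteq J_{v_n}$ one has $\nu_{u_n}(x) = \pm\nu_{v_n}(x)$ and, orienting consistently, $[u_n](x)\cdot\nu_{u_n}(x) = [v_n](x)\cdot\nu_{v_n}(x) + 2c_n\psi^n_j(x)$; hence on $(J_{v_n}\setminus Z_n) \cap \{\psi^n_j = 1\}$ we get $[u_n](x)\cdot\nu_{u_n}(x) \ge -\omega_n + 2c_n = \omega_n > 2\tau_n$, so in particular $[u_n](x)\neq 0$ there. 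Therefore both $\{x\in J_{u_n}\colon [u_n]\cdot\nu_{u_n} \le 2\tau_n\}$ and $J_{v_n}\setminus J_{u_n}$ are contained in $Z_n \cup \bigcup_j\{x\in S^n_j\colon \psi^n_j(x)<1\}$, whose $\mathcal{H}^1$--measure is $\le \mathcal{H}^1(Z_n) + 2m_n a_n \to 0$. This gives \eqref{eq:lemma6.1-4}, and together with $\mathcal{H}^1(J_{v_n}) \to \mathcal{H}^1(J_u)$ also \eqref{eq:lemma6.1-3}.

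\textbf{Main obstacle.} Everything in the second step is essentially bookkeeping; the substantive point is \eqref{eq:plan-approxCC}. Establishing it requires entering the construction in \cite{Crismale2, Cortesani-Toader:1999} (and \cite[Theorem~3.6]{higherordergriffith}): on the $C^1$ part of $J_u$ the approximants are obtained by mollifying the one--sided traces $u^\pm$ separately on the two sides of the jump, so that over that part $[v_n] \approx [u]$ and $\nu_{v_n} \approx \nu_u$, whence $[v_n]\cdot\nu_{v_n} \ge [u]\cdot\nu_u - o(1) \ge -o(1)$; the remaining part of $J_{v_n}$ — lying over the $\mathcal{H}^1$--negligible non--$C^1$ set, plus a truncation layer $\{|[u]| > M_n\}$ with $M_n\to\infty$, plus the newly created interfaces — has $\mathcal{H}^1$--measure tending to $0$. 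Turning ``$\approx$'' into the uniform estimate \eqref{eq:plan-approxCC} (controlling the deviation of $\nu_{v_n}$ from $\nu_u$ against the amplitude of $[u]$, uniformly) and making the resulting defect compatible with the crack--tip budget $\omega_n^2 m_n \to 0$ is the technical heart of the lemma, and is precisely what the ``refined $GSBD^2$--density result'' in the paper provides.
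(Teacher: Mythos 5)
Your route is genuinely different from the paper's, and the difference matters: your construction crucially requires the unproven claim~\eqref{eq:plan-approxCC}, and this is a real gap that cannot be patched by pointing to the ``refined $GSBD^2$-density result'' (Theorem~\ref{th: density new}). That theorem does not produce~\eqref{eq:plan-approxCC}: it takes as a \emph{hypothesis} the strengthened, $\tau$-separated contact condition~\eqref{eq: good-t}, which is precisely the conclusion of the lemma you are trying to prove, so invoking it here would be circular. Nor does the statement of Theorem~\ref{th: crismale-density2} say anything about approximate preservation of $[v_n]\cdot\nu_{v_n}\ge 0$; you would have to reopen the proofs in~\cite{Crismale2, Cortesani-Toader:1999} and track the sign of the normal jump through the mollification/truncation/piecewise-affine steps, and this tracking is delicate near the ``non-$C^1$'' residual set, near the truncation layer $\{|[u]|>M_n\}$, and near the newly created interfaces — exactly the places where $\nu_{v_n}$ can rotate away from $\nu_u$ by order one. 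So the ``main obstacle'' you flag is not a technicality to be read off the cited results; it is an open step that your argument leaves unproved.

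The paper sidesteps this entirely by perturbing $u$ \emph{directly}, before any polyhedral approximation. It uses only the $\mathcal{H}^1$-rectifiability of $J_u$: for $\mathcal{H}^1$-a.e.\ $x_0\in J_u$ one can choose a ball $B_\rho(x_0)$, a $C^1$ separating arc $\Gamma_{x_0}$ approximating $J_u$ on that ball, and two sides $B_\rho^{\Gamma,\pm}(x_0)$. After a Besicovitch selection of finitely many pairwise disjoint such balls covering most of $J_u$, one sets
\begin{align*}
\bar u(x) = u(x) + \sum_i \rho_i\,\varphi\big((x-x_i)/\rho_i\big)\,\nu_u(x_i)\,\chi_{B_{\rho_i}^{\Gamma,+}(x_i)}(x),
\end{align*}
with $\varphi$ a fixed cut-off. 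On the core of each ball, the jump of $\bar u$ along $\Gamma_{x_i}$ then gains a normal opening of size $\approx \rho_i$ (and $\nu_{\Gamma_{x_i}}\cdot\nu_u(x_i)>1-\theta$ keeps the direction right), while the elastic and surface defects are controlled by $\Vert\nabla\varphi\Vert_\infty\sim\theta^{-1}$, the bound $\rho_i\le\theta^3$, and the density estimates on $J_u\cap B_{\rho_i}(x_i)$. Since the boundary balls avoid $\Omega'\setminus\overline\Omega$ (opening only on the interior side), $\bar u\in GSBD^2_h(\Omega')$. This yields~\eqref{eq: all-propi}--\eqref{eq: all-propi2} with $\bar\tau=\tfrac12(1-\theta)\min_i\rho_i$ and concludes by letting $\theta\to 0$. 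The upshot: the paper needs no approximation theorem at all for this lemma, no bookkeeping of $m_n$, $a_n$, $\rho_n$, and, most importantly, no analogue of~\eqref{eq:plan-approxCC}. Your strategy could in principle be pushed through if~\eqref{eq:plan-approxCC} were established, but that statement is not in the paper, is not implied by the paper's density results, and is the substantial missing piece of your argument.
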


\begin{proof}
Fix $0<\theta\le \frac{1}{2}$. It suffices to construct a function $\bar{u} \in GSBD^2_{ h \EEE} (\Omega')$  such that 
\begin{align}\label{eq: all-propi}
\Vert u - \bar{u} \Vert_{L^\infty(\Omega')} + \Vert e(u) - e(\bar{u}) \Vert_{L^2(\Omega')} + \mathcal{H}^1(J_u \triangle J_{\bar{u}}) \le c(1+\mathcal{H}^1(J_u))\theta
\end{align}
for some universal $c>0$, and such that  for some $\bar{\tau} >0$ we have
\begin{align}\label{eq: all-propi2}
\mathcal{H}^1\big(\lbrace x \in J_{\bar{u}}\colon \,    [\bar u] \EEE(x) \cdot \nu_{  \bar u \EEE }(x) \le  2\bar{\tau} \rbrace \big) \le c(1+\mathcal{H}^1(J_u))\theta.
\end{align}
Then the result follows by considering a sequence $(\theta_n)_n$ converging to $0$.

We start by using the fact that  $J_u$ is  countably $\mathcal{H}^1$-rectifiable: 
arguing as in, e.g., \cite[Proof of Theorem~2]{Chambolle:2004} or \cite[Proof of Theorem~1.1]{Crismale2}, we infer  that for $\mathcal{H}^1$-a.e.\ $x_0 \in J_u$ there exist  the approximate unit normal $\nu_{u}(x_{0}) \in \mathbb{S}^{1}$ to $J_{u}$ at $x_{0}$, a positive number $\overline{\rho}(x_{0}) \in (0, \theta^3)$, and a curve~$\Gamma_{x_0}$ such that  the following properties hold: $\Gamma_{x_{0}}$ is the graph of a $C^1$ and Lipschitz function with~$x_0 \in \Gamma_{x_0}$, for every $\rho< \overline{ \rho}(x_0)$ the curve $\Gamma_{x_0}\cap B_\rho(x_0)$ separates $B_\rho(x_0)$ in two open connected components $B_\rho^{\Gamma,\pm}(x_0)$, and  
\begin{subequations}
\label{1104201859}
\begin{align}
 &   \mathcal{H}^1\big(J_u  \cap {B_\rho(x_0)} \big) \ge (1-\theta)2\rho, \label{1104201859-i} \\
&  \mathcal{H}^1\big((J_u \triangle \Gamma_{x_0}) \cap {B_\rho(x_0)}\big) \le \theta\rho,\label{1104201859-ii}\\
&      \mathcal{H}^1\big(  J_u   \cap (B_\rho(x_0) \setminus B_{(1-\theta)\rho}(x_0)) \big)  \le 3\theta \rho, \quad  \mathcal{H}^1\big(  \Gamma_{x_0} \cap (B_\rho(x_0) \setminus B_{(1-\theta)\rho}(x_0)) \big)  \le 3\theta \rho \label{1104201859-iii}\\
&  \nu_{\Gamma_{x_0}} \cdot \nu_u(x_0) > 1 - \theta \quad \text{ on $\Gamma_{x_0} \cap B_\rho(x_0)$},\label{1104201859-iv}
\end{align}
\end{subequations}
where $\nu_{\Gamma_{x_0}}(x)$ denotes the outer normal to $\partial B_\rho^{\Gamma,-}(x_0) \cap \Gamma_{x_0}$ at~$x$. \EEE Moreover,  for  each  $\rho< \overline{ \rho}(x_0)$, we have
\begin{align}\label{1104201859-2}
B_\rho(x_0) \subset \Omega \quad \text{ if $  x_0 \EEE \in J_u \cap \Omega$}, \quad \quad \quad B_\rho^{\Gamma,+}(x_0) \subset \Omega \quad \text{ if $  x_0 \EEE \in J_u \cap \partial \Omega$,} 
\end{align}
where in the latter case $\nu_u(x_0) $ corresponds to the inner unit normal at $x_0 \in \partial \Omega$.

 For $x \in J_{u}$ and $\rho \in (0, \overline{\rho}(  x \EEE))$, the balls $B_{\rho}(x)$ \EEE are a fine cover of  $J_u$ up to a set of negligible $\mathcal{H}^1$-measure. By applying  Besicovitch's covering theorem to this fine cover, we find a finite number of pairwise disjoint balls ${B_{\rho_i}(x_i)}$, $i=1,\ldots,m$,  such that $x_i \in J_u$, \eqref{1104201859}--\eqref{1104201859-2} hold, and \EEE 
\begin{align}\label{eq: outer jump}
\mathcal{H}^1\Big(J_u \setminus \bigcup_{i=1}^m \EEE B_{\rho_i}(x_i) \Big) \le \theta.
\end{align}
We consider $\varphi \in C^\infty_c( B_1(0))$ with $\varphi \equiv 1$ on $B_{1-\theta}(0)$, $\Vert \varphi \Vert_\infty = 1$, and $\Vert \nabla \varphi \Vert_\infty \le c\theta^{-1}$ for some $c>0$. We define the function
\begin{align}\label{eq: ubardef}
\bar{u}(x)  \coloneqq \EEE u(x) +  \sum_{i=1}^m \rho_i  \,   \varphi\big(  (x-x_i)/\rho_i   \big) \, \nu_u(x_i) \,              \chi_{B_{\rho_i}^{\Gamma,+}(x_i)}(x) \quad   \text{ for $x \in \Omega'$}. \EEE 
\end{align}
We start by observing that \eqref{1104201859-2} implies $\bar{u} = u$ on $\Omega'\setminus \overline{\Omega}$, and thus $\bar{u} \in GSBD^2_{ h \EEE} (\Omega')$. Let us now check \eqref{eq: all-propi}--\eqref{eq: all-propi2}. First, since $\Vert \varphi \Vert_\infty \le 1$ and the balls $B_{\rho_i}(x_i)$ are pairwise disjoint,  we clearly have 
\begin{equation}
\label{eq: baru1}
\Vert u - \bar{u} \Vert_{L^\infty(\Omega')} \le \max_i \rho_i \le \theta^3 \le \theta, 
\end{equation}
 where we used $\max_i \rho_i \le \theta^3$.  By a change of variables,   \eqref{1104201859-i}, and  $\Vert \nabla \varphi\Vert_\infty \le c\theta^{-1}$ we further get 
\begin{align}
\label{eq: baru2}
\Vert e( \bar u ) - e(u) \Vert^2_{L^2(\Omega')} &  \le \sum_{i=1}^m  \EEE   \int_{B_{\rho_i}^{\Gamma,+}(x_i)} |\nabla \varphi \big(  (x-x_i)/\rho_i   \big)    |^2 \, {\rm d} x 
\\ 
&
\le \sum_{i=1}^m \rho^2_{i} \int_{B_1(0)} |\nabla \varphi(x)   |^2 \, {\rm d} x \nonumber
\\
&
 \le  \frac{\theta^3}{2(1-\theta)} \Vert \nabla \varphi \Vert^2_{L^2(B_1(0))}  \sum\nolimits_{i=1}^m \mathcal{H}^1(J_u \cap B_{\rho_i}(x_i)) \nonumber
 \\ 
 &\vphantom{\sum_{i=1}^{N}} \le  c\theta \mathcal{H}^1(J_u) \nonumber
\end{align}
\EEE
for a universal constant $c>0$. Up to slightly altering the values of~$\rho_i$, we can suppose that $\mathcal{H}^1(J_u \setminus J_{\bar{u}}) = 0$. As $J_{\bar{u}} \setminus J_u \subset \bigcup_{i=1}^m  (\Gamma_{x_i} \setminus J_u) \cap B_{\rho_i}(x_i)$,  \eqref{1104201859-i}--\eqref{1104201859-ii} imply 
\begin{equation}
\label{eq: baru3}
\mathcal{H}^1(J_u \triangle J_{\bar{u}}) \le \sum_{i=1}^m \theta\rho_i \le c\theta \mathcal{H}^1(J_u).
\end{equation}
Combining~\eqref{eq: baru1}--\eqref{eq: baru3} we conclude~\eqref{eq: all-propi}. \EEE

We now show \eqref{eq: all-propi2}.  First, for  $i=1,\ldots,m$ and for $\mathcal{H}^1$-a.e.\  
$x \in  \big( J_{\bar{u}} \cap J_u \cap \Gamma_{x_i}  \big)  \cap B_{(1-\theta)\rho_i}(x_i) $  we find $\nu_{\bar{u}}(x) = \nu_u(x) = \nu_{\Gamma_{x_i}}(x)$.  Thus, by  \eqref{1104201859-iv}, by \eqref{eq: ubardef}, and by~\eqref{eq: CC}, we get  for $\mathcal{H}^{1}$-a.e.~$x \in  \bigcup_{i=1}^{m}\big( J_{\bar{u}} \cap J_u \cap \Gamma_{x_i}  \big)  \cap B_{(1-\theta)\rho_i}(x_i) $ \EEE
\begin{align}\label{eq: longi1}
[\bar{u}](x) \cdot \nu_{\bar{u}}(x) = [{u}](x) \cdot \nu_{{u}}(x) + \big([\bar{u}](x)  - [{u}](x) \big) \cdot \nu_{\Gamma_{x_i}}(x) \ge   \rho_i \EEE \nu_u(x_i) \cdot  \nu_{\Gamma_{x_i}}(x) > (1-\theta)  \rho_i. \EEE
\end{align}
On the other hand, we have
\begin{align*}
\mathcal{H}^1\Big( J_{\bar{u}} \setminus \bigcup_{i=1}^m \big(J_u \cap   \Gamma_{x_i}  \cap B_{(1-\theta)\rho_i}(x_i)                \big) \Big) & \le \mathcal{H}^1\Big(J_u \setminus \bigcup_{i=1}^m B_{\rho_i}(x_i) \Big) + \sum_{i=1}^m \mathcal{H}^1\big(  ( J_{{u}} \triangle \Gamma_{x_i}  )  \cap B_{(1-\theta)\rho_i}(x_i)     \big) \\ & \ \ \  +  \sum_{i=1}^m \EEE \mathcal{H}^1\big(         ( J_{{u}} \cup \Gamma_{x_i}  ) \cap  \big(   B_{\rho_i}(x_i) \setminus B_{(1-\theta)\rho_i}(x_i)      \big)\big). 
\end{align*}
Then, by  \eqref{1104201859-i}--\eqref{1104201859-iii} \EEE and  \eqref{eq: outer jump} we conclude
$$\mathcal{H}^1\Big( J_{\bar{u}} \setminus  \bigcup_{i=1}^m \EEE \big(J_u \cap  \Gamma_{x_i}  \cap B_{(1-\theta)\rho_i}(x_i)                \big) \Big) \le \theta +  \sum_{i=1}^m \EEE 7\theta \rho_i \le \theta + c\theta \mathcal{H}^1(J_u).$$
This along with \eqref{eq: longi1} shows that  \eqref{eq: all-propi2} holds for $\bar{\tau} = \frac{1}{2}(1-\theta)\min_i  \rho_i$.
\end{proof}

We now provide an adaption of the $GSBD^2$-density result stated in Theorem \ref{th: crismale-density2} which guarantees the contact condition up to a part of the jump set with small $\mathcal{H}^1$-measure.

\begin{theorem}[Density with boundary data and contact condition]\label{th: density new}
Let $\Omega \subset \Omega' \subset  \R^2$  be bounded Lipschitz domains satisfying \eqref{eq: density-condition2}--\eqref{eq: density-condition}. Let $\theta>0$, $\tau >0$,  $h \in W^{r,\infty}(\Omega')$ for $r \in \N$, and let $u \in GSBD^2_h(\Omega')$  satisfy \EEE 
\begin{align}\label{eq: good-t}
\mathcal{H}^1\big(\lbrace x \in J_{u}\colon \,   [u](x) \cdot \nu_u(x)  \le  2\tau \rbrace \big) \le \theta.
\end{align}
 Then, there  exist  a sequence of functions $(u_n)_n$  in  $SBV^2(\Omega; \R^2)$, a sequence of neighborhoods $(U_n)_n$ of $\Omega' \setminus \Omega$, and a sequence of neighborhoods $(\Omega_n)_n$ of $\Omega \setminus U_n$  such that  $U_{n}  \subset \Omega'$, $\Omega_{n} \subset \Omega$,  $u_n =  h $ on $\Omega' \setminus \overline{\Omega}$, $u_n|_{U_n} \in W^{r,\infty}(U_n;\R^2)$, $u_n|_{\Omega_n} \in \mathcal{W}(\Omega_n;\R^2)$, and 
\begin{subequations}
\label{eq: dense-boundary-new}
\begin{align}
 & \ \vphantom{\lim_{n\to \infty} } u_n \to  u  \text{ in measure on } \Omega' \text{ as $n \to \infty$},\label{eq: dense-boundary-new-1}\\
 &  \lim_{n\to \infty} \,\Vert e(u_n) - e(u) \Vert_{L^2(\Omega')} = 0,\label{eq: dense-boundary-new-2}\\
 &  \lim_{n \to \infty} \,  \mathcal{H}^{1}(J_{u_n}) = \mathcal{H}^{1}(J_u),\label{eq: dense-boundary-new-3}\\
 & \limsup_{ n \to \infty } \, \mathcal{H}^1\big(\lbrace x \in J_{u_n}\colon \,   [u_n](x) \cdot \nu_{u_n}(x)  \le  \tau \rbrace \big) \le 3\theta. \label{eq: good-t2}
\end{align}
\end{subequations} \EEE
In particular, $u_n \in W^{r,\infty}(\Omega \setminus J_{u_n};\R^2)$ for all $n \in \N$. \EEE 
\end{theorem}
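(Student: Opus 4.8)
The plan is to revisit the proof of the density result Theorem~\ref{th: crismale-density2}, which — following \cite{higherordergriffith} and building on \cite{Crismale2} and \cite{Cortesani-Toader:1999} — is organized into three essentially independent moves: (i) a truncation/boundary‑correction step, producing from $u\in GSBD^2_h(\Omega')$ a function that agrees with $h$ on a neighborhood $U_n$ of $\Omega'\setminus\Omega$ and is still $GSBD^2$ on $\Omega$; (ii) a \emph{polyhedralization} step à la Cortesani--Toader, replacing the function by one whose jump set inside $\Omega_n$ is a finite union of disjoint segments and which is $W^{r,\infty}$ (indeed in $\mathcal{W}$) off its jump; (iii) the diagonal bookkeeping assembling (i)--(ii) into the claimed sequence. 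None of these moves sees the contact condition, so Theorem~\ref{th: crismale-density2} cannot be invoked as a black box; instead I would carry along the quantity $\mathcal{H}^1(\{x\in J_v:\,[v](x)\cdot\nu_v(x)\le s\})$ and track how it degrades: from level $s=2\tau$, measure $\le\theta$ for $u$, to level $s=\tau$, measure $\le 3\tau$ — sorry, measure $\le 3\theta$ — in the limsup for the approximants, the final $3\theta$ collecting one $\theta$ from each of three sources: the original bad part of $J_u$, the blow‑up/interface error created in (ii), and the truncation/boundary artifacts of (i).

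\textbf{Step (i).} Since $u\in GSBD^2(\Omega')$ is $\mathcal{H}^1$‑a.e.\ finite on $J_u$ and $\mathcal{H}^1(J_u)<\infty$, the super‑level sets $\{x\in J_u:\,|u^+_x|+|u^-_x|>M\}$ have $\mathcal{H}^1$‑measure tending to $0$ as $M\to\infty$; fix $M=M(\theta)$ with this measure $\le\theta$. Truncating at a level $k\gg M$ leaves $[u]$ and $\nu_u$ unchanged on $J_u$ off this $\mathcal{H}^1$‑small set, hence enlarges $\{[\cdot]\cdot\nu_{\cdot}\le 2\tau\}$ by at most $\theta$. The boundary correction near $\partial_D\Omega$ is performed, using \eqref{eq: density-condition2}--\eqref{eq: density-condition}, by composing with the dilations $O_{\delta,x_0}$ and gluing to $h$; being (for $\delta$ small) a diffeomorphic re‑parametrization away from $\partial_D\Omega$, it creates no new jump inside $\Omega$ and perturbs opening and normal only by $o(1)$ as $\delta\to 0$. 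After this step the approximant $v$ satisfies $\mathcal{H}^1(\{[v]\cdot\nu_v\le 2\tau-o(1)\})\le 2\theta$.

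\textbf{Step (ii): polyhedralization with jump reproduction.} This is the heart of the proof. As in the opening of the proof of Lemma~\ref{lemma: contact} (see \eqref{1104201859}), I would use $\mathcal{H}^1$‑rectifiability of $J_u$ and Besicovitch's covering theorem to cover $\mathcal{H}^1$‑almost all of $J_u$ by finitely many pairwise disjoint balls $B_{\rho_i}(x_i)$, with $x_i\in J_u$, $\rho_i<\delta^3$, such that inside $B_{\rho_i}(x_i)$ the function $u$ is $L^1$‑close to the pure jump with values $u^\pm_{x_i}$ and normal $\nu_u(x_i)$, and the local Cortesani--Toader approximant $\hat u$ has jump set (essentially) a $C^1$ curve $\Gamma_{x_i}$ with $\nu_{\Gamma_{x_i}}\cdot\nu_u(x_i)>1-\delta$ whose one‑sided traces converge, as $\delta\to 0$, to $\fint_{B^{\Gamma,\pm}_{\rho_i}(x_i)}u$. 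By the blow‑up at $x_i$ these averages differ from $u^\pm_{x_i}$ by $o(1)$, so, writing $[u]_{x_i}:=u^+_{x_i}-u^-_{x_i}$, on $\Gamma_{x_i}\cap B_{(1-\delta)\rho_i}(x_i)$ one gets
\begin{displaymath}
[\hat u]\cdot\nu_{\hat u}\ \ge\ [u]_{x_i}\cdot\nu_u(x_i)\ -\ C\,|[u]_{x_i}|\,\sqrt{\delta}\ -\ o(1)\,.
\end{displaymath}
Since $|[u]|\le M(\theta)$ on $J_u$ off a set of $\mathcal{H}^1$‑measure $\le\theta$, choosing $\delta$ small (depending on $\tau,\theta$) makes the right‑hand side $>\tfrac32\tau$ wherever $[u]_{x_i}\cdot\nu_u(x_i)>2\tau$ and $|[u]_{x_i}|\le M(\theta)$; that is, the part of $J_{\hat u}$ produced over the good set $\{[u]\cdot\nu_u>2\tau\}$ lies in $\{[\hat u]\cdot\nu_{\hat u}>\tfrac32\tau\}$, save for an exceptional set whose $\mathcal{H}^1$‑measure vanishes as $\delta\to 0$, in particular $\le\theta$ once $\delta$ is small. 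The residual jump of $\hat u$ — the part over $\{[u]\cdot\nu_u\le 2\tau\}$, the spurious jump on $\bigcup_i\partial B_{\rho_i}(x_i)$, and $J_u\setminus\bigcup_i B_{\rho_i}(x_i)$ — has total $\mathcal{H}^1$‑measure $\le\theta+o(1)$, thanks to \eqref{1104201859} and the convergences $\mathcal{H}^1(J_{\hat u})\to\mathcal{H}^1(J_u)$ and $\mathcal{H}^1\llcorner J_{\hat u}\weaklystar\mathcal{H}^1\llcorner J_u$ (the latter from $GSBD$ lower semicontinuity plus mass convergence). Summing, $\mathcal{H}^1(\{[\hat u]\cdot\nu_{\hat u}\le\tfrac32\tau\})\le 2\theta$ for $\delta$ small; on $U_n$ the approximant equals $h\in W^{r,\infty}$ and contributes no jump.

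\textbf{Assembly and main obstacle.} Diagonalizing over the construction parameter $\delta$ and the truncation level $k$ against requirements \eqref{eq: dense-boundary-new-1}--\eqref{eq: dense-boundary-new-3} — which are precisely those already delivered by Theorem~\ref{th: crismale-density2} — one obtains $(u_n)_n$ in $SBV^2(\Omega;\R^2)$ with $u_n=h$ on $\Omega'\setminus\overline\Omega$, $u_n|_{U_n}\in W^{r,\infty}(U_n;\R^2)$, $u_n|_{\Omega_n}\in\mathcal{W}(\Omega_n;\R^2)$; and since the threshold $\tfrac32\tau$ of step (ii) is relaxed to $\tau$ only to absorb the residual $o(1)$‑perturbations from step (i) and the assembly, $\limsup_n\mathcal{H}^1(\{x\in J_{u_n}:\,[u_n](x)\cdot\nu_{u_n}(x)\le\tau\})\le 2\theta+\theta=3\theta$, i.e.\ \eqref{eq: good-t2}. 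The main obstacle is step (ii): one must genuinely re‑enter the Cortesani--Toader local construction and certify that the polyhedral approximant reproduces, near $\mathcal{H}^1$‑a.e.\ jump point of $u$, \emph{both} the jump opening \emph{and} the oriented normal, up to vanishing error — that is, that no averaging or cancellation can turn a large positive $[u]\cdot\nu_u$ into a small one — since only this guarantees that the good part of $J_u$ survives above the threshold $\tau$. A secondary, purely bookkeeping difficulty is keeping the three error contributions separate so that they add to $3\theta$ rather than to something larger, and checking that the boundary‑correction and $U_n$‑matching (governed by \eqref{eq: density-condition}) genuinely add no extra surface contribution. This result, together with Lemma~\ref{lemma: contact}, is exactly what is needed to construct the recovery sequence in Theorem~\ref{th: recovery}.
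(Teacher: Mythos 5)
Your proposal does not close the argument: the step you yourself flag as ``the main obstacle'' --- certifying that the polyhedral approximant reproduces, near $\mathcal{H}^1$-a.e.\ jump point, both the jump opening and the oriented normal up to vanishing error --- is essentially the entire content of the theorem, and you give no mechanism for proving it. Re-entering the Cortesani--Toader/Chambolle--Crismale construction is not a routine tracking exercise: the approximants there are built by averaging on fine partitions, and nothing in that construction directly yields the pointwise lower bound $[\hat u]\cdot\nu_{\hat u}\ge [u]_{x_i}\cdot\nu_u(x_i)-o(1)$ on most of the approximate jump; indeed Example~\ref{ex} shows that measure convergence plus a mere energy \emph{bound} allows exactly the cancellation you need to exclude, so the claim can only follow from the energy \emph{convergence} \eqref{eq: dense-boundary-2}--\eqref{eq: dense-boundary-3}, and that implication is what must be proved. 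Your Step~(i) is also off-track: no truncation or dilation/gluing is needed here (those belong to the proof of Theorem~\ref{th: crismale-density2} itself), and introducing them only adds unverified perturbations of $[u]$ and $\nu_u$.

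For comparison, the paper's proof \emph{does} use Theorem~\ref{th: crismale-density2} as a black box: it takes the sequence $(u_n)_n$ it produces and then shows a posteriori that \eqref{eq: good-t2} holds for \emph{any} sequence satisfying \eqref{eq: dense-boundary-new-1}--\eqref{eq: dense-boundary-new-3}. The mechanism is the structural blow-up result Proposition~\ref{prop: Korn} (resting on the piecewise Korn--Poincar\'e inequality): one covers $J_u^{\rm good}=\{[u]\cdot\nu_u>2\tau,\ \zeta^{-1}\le|[u]|\le\zeta\}$ by finitely many disjoint squares on which the blow-up conditions \eqref{eq: many properties1-neu} hold, transfers them to $u_n$ via the localized energy convergence \eqref{eq: NC refined2}, and obtains from Proposition~\ref{prop: Korn} sets $D^\pm_n$ on which $u_n$ is uniformly close to $u^\pm_{x_i}$ together with a crossing curve $\Psi_n\subset J_{u_n}$; on the portion of $\Psi_n$ whose normal is within $\lambda$ of $\nu_u(x_i)$ one gets $[u_n]\cdot\nu_{u_n}>\tau$ directly, and the length bounds \eqref{eq: many properties2-neu-1}, \eqref{eq: nocmal alles2}, \eqref{eq: gamma---XXX} control the remainder. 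If you want to salvage your outline, replace your Step~(ii) by exactly this argument; without it (or an equivalent quantitative trace result for the density construction, which is not available in the cited references), the proof is incomplete.
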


\begin{proof}
Given $\theta>0$ and $u \in GSBD^2_{ h \EEE} (\Omega')$ as in the statement, we apply Theorem \ref{th: crismale-density2}  to \EEE $u$ to obtain an approximating sequence $(u_n)_n \subset  SBV^2(\Omega; \R^2)$ satisfying the properties  \eqref{eq: dense-boundary-new-1}--\eqref{eq: dense-boundary-new-3}.  Note also that it is not restrictive to assume that $\mathcal{H}^1(J_u)>0$. Otherwise, \eqref{eq: good-t2} would follow directly from \eqref{eq: dense-boundary-new-3}. \EEE By defining  $J_{u_n}^{\rm bad}  \coloneqq \EEE \lbrace  x \in J_{u_n}  \colon \,   [u_n](x) \cdot  \nu_{u_n}(x) \le \tau   \rbrace$, we see that  to conclude~\eqref{eq: good-t2} we  need \EEE to show that  
\begin{equation}
\label{eq: bad-theta}
\limsup_{n \to \infty}\, \mathcal{H}^1(J_{u_n}^{\rm bad}) \le 3\theta.
\end{equation}\EEE

 Let us fix  $\zeta>0$ \EEE sufficiently large such that $\mathcal{H}^1(\lbrace x\in J_u\colon \, |[u](x)| <  \zeta^{-1} \EEE \text{ or } |[u](x)| >  \zeta \EEE) \le \theta$  and let us set \EEE  $J_u^{\rm good}  \coloneqq \EEE \lbrace x \in J_{u}\colon \,   [u](x) \cdot \nu_u(x) > 2\tau, \  \zeta^{-1} \EEE \le |[u](x)| \le  \zeta \EEE  \rbrace.$ By \eqref{eq: good-t} we get
\begin{align}\label{eq: good part}
\mathcal{H}^1(J_u \setminus J^{\rm good}_u) \le 2\theta.
\end{align}
Let us also \EEE fix $\lambda \in (0,1)$   such that 
\begin{align}\label{eq: lambda-neu}
\lambda \le \frac{\tau}{2  \zeta \EEE}
\end{align}
and $\eta \in (0,1)$ such that
\begin{align}\label{eq: eta-neu}
\eta < \min\Big\{  \frac{\lambda^2\theta}{  56 \EEE (\lambda^2+1)\mathcal{H}^1(J_u)}, \frac{\tau}{12}, \frac{1}{7 \zeta \EEE}, \theta_0, 10^{-4}  \Big\}, 
\end{align}
 with $\theta_0$ from Proposition \ref{th: kornpoin-sharp}. The choice of~$\lambda$ and $\eta$ will become clear along the proof. \EEE

\noindent \emph{Step 1: Blow-up.} We now introduce a covering of $J_u^{\rm good}$: for $\mathcal{H}^1$-a.e.\ $x \in J^{\rm good}_u$, we find $\nu_u(x) \in \mathbb{S}^1$,  $u^+_{x},u^-_{x} \in \R^2$, and  $0 <\bar{\rho}(x) \le 1$ such that for all  $0 < \rho < \bar{\rho}(x)$ it holds that  
\begin{subequations}
\label{eq: many properties1-neu}
\begin{align}
 &   \!\!\! \vphantom{\int} |\mathcal{H}^1\big(J_u \cap Q^{x}_\rho\big) - \rho| \le \frac{\rho\eta}{2}, \label{eq: many properties1-neu-1}\\
 & \!\!\!  \mathcal{L}^2\Big(\Big\{ y\in Q^{x,+}_\rho\colon  |u(y) - u^+_{x}| > \frac{\eta}{\bar{c}_{\eta^3/2}} \Big\}  \Big) +  \mathcal{L}^2\Big(\Big\{  y\in Q^{x,-}_\rho\colon  |u(y) - u^-_{x}| > \frac{\eta}{\bar{c}_{\eta^3/2}}  \Big\}  \Big)   \le \frac{\rho^2\eta^4}{2},\label{eq: many properties1-neu-2} \\
 & \!\!\!  \int_{Q^{x}_\rho} |e(u)|^2 \, {\rm d}y \le \frac{\rho\eta^2}{2C_\eta \bar{c}^2_{\eta^3/2}}, \label{eq: many properties1-neu-3}
\end{align}
\end{subequations}\EEE
where $Q^{x}_\rho$ denotes the square with sidelength $\rho$ centered at $x$ with two sides parallel to   $\nu_u(x)$, \EEE  and $Q^{x,\pm}_\rho  \coloneqq \EEE Q^{x}_\rho \cap \lbrace \YYY y\colon \, \EEE  \pm (y - x) \cdot \nu_{ u \EEE}(x) >0 \rbrace$. Moreover,   $C_\eta \ge 1$ and $ \bar{c}_{\eta^3/2} \EEE \ge 1$ denote the constants of Proposition \ref{th: kornpoin-sharp} applied for $\theta= \eta$ and  of Lemma \ref{lemma: rigid motion} applied for $\theta =  \eta^3/2 \EEE$, respectively.  We refer to \eqref{eq: many properties1}  and \eqref{eq: many properties2} \EEE above for an analogous  argument.

 For $x \in J^{\rm good}_u$ and for  $\rho < \overline{\rho}(x)$ such that $\mathcal{H}^1\big(J_u \cap \partial Q^{x}_\rho\big) =0$, the squares $Q^{x}_{\rho}$ form \EEE a fine cover of  $J^{\rm good}_u$ up to a set of negligible $\mathcal{H}^1$-measure.  By applying  Besicovitch's covering theorem to this fine cover, we find a finite number of pairwise disjoint  squares \EEE ${Q^{x_i}_{\rho_i}}$, $i=1,\ldots,m$,  such that the centers~$x_{i}$ belong to $J^{\rm good}_u$, \eqref{eq: many properties1-neu} holds, $\mathcal{H}^1\big(J_u \cap \partial Q^{x_i}_{\rho_i}\big) =0$, and \EEE 
\begin{align}\label{eq: outer jump-new}
\mathcal{H}^1\Big(J^{\rm good}_u \setminus  \bigcup_{i=1}^m \EEE Q^{x_i}_{\rho_i} \Big) \le  \frac{\theta}{2}.\EEE
\end{align}
 Arguing as in~\eqref{eq: NC refined2.1}, by  \eqref{eq: dense-boundary-new-2}--\eqref{eq: dense-boundary-new-3} \EEE and the fact that  $\mathcal{H}^1\big(J_u \cap \partial Q^{x_i}_{\rho_i}\big) =0$ we deduce that
 \begin{subequations}
\label{eq: NC refined2}
\begin{align}
 & \vphantom{\bigcup_{i=1}^{m}}   \lim_{n \to \infty} \, \Vert e(u_n) \Vert_{L^2(Q^{x_i}_{\rho_i})}  =  \Vert e(u) \Vert_{L^2(Q^{x_i}_{\rho_i})} \qquad    \text{for all $i=1,\ldots,m$},\label{eq: NC refined2-1}\\
 & \vphantom{\bigcup_{i=1}^{m}} \lim_{n \to \infty} \, \mathcal{H}^{1}(J_{u_n}\cap Q^{x_i}_{\rho_i})  =   \mathcal{H}^{1}(J_{u} \cap Q^{x_i}_{\rho_i}) \qquad \text{for all $i=1,\ldots,m$}, \label{eq: NC refined2-2}\\
&  \lim_{ n \to \infty} \, \mathcal{H}^{1}\Big(J_{u_n}\setminus \bigcup_{i=1}^m Q^{x_i}_{\rho_i}\Big)  =   \mathcal{H}^{1}\Big(J_{u} \setminus \bigcup_{i=1}^m Q^{x_i}_{\rho_i}\Big).\label{eq: NC refined2-3}
\end{align}
\end{subequations} \EEE
 The convergence in~\eqref{eq: NC refined2} along with \eqref{eq: dense-boundary-new-1} and \eqref{eq: many properties1-neu} shows  that for $n$ large enough we have
\begin{subequations}
\label{eq: many properties2-neu}
\begin{align}
&  \vphantom{\int} \mathcal{H}^1\big(J_{u_n} \cap Q^{x_i}_{\rho_i}\big) \le    \rho_i (1 + \eta), \label{eq: many properties2-neu-1}\\
 &  \vphantom{\int} \mathcal{L}^2\Big(\Big\{ x\in Q^{x_i,+}_{\rho_i}\colon \, |u_n - u^+_{x_i}| > \frac{\eta}{\bar{c}_{\eta^3/2}}  \Big\}  \Big) +  \mathcal{L}^2\Big(\Big\{ x\in Q^{x_i,-}_{\rho_i}\colon \, |u_n - u^-_{x_i}| > \frac{\eta}{ \bar{c}_{\eta^3/2}}  \Big\}  \Big)   \le \rho_i^2\eta^4,\label{eq: many properties2-neu-2} \\
 &   \int_{Q^{x_i}_{\rho_i}} |e(u_n)|^2 \, {\rm d}x \le \frac{\rho_i \eta^2}{C^2_\eta \bar{c}_{\eta^3/2}^2}.\label{eq: many properties2-neu-3}
\end{align}
\end{subequations} \EEE
In the following, without further notice, $n$ will always be chosen sufficiently large such that \eqref{eq: many properties2-neu} holds for all $i=1,\ldots,m$.

\noindent \emph{Step 2: Conclusion.} The main step of the proof consists in showing that
\begin{align}\label{eq: main bad step}
\mathcal{H}^1\Big(J^{\rm bad}_{u_n} \cap Q_{\rho_i}^{x_i}    \Big) \le \frac{\theta}{4 \mathcal{H}^1(J_u)} \rho_i \quad \text{for $i=1,\ldots,m$}.
\end{align}
Once we have proved \eqref{eq: main bad step}, the  claim~\eqref{eq: bad-theta} \EEE is achieved as follows:  by applying \eqref{eq: good part}, \eqref{eq: outer jump-new}, and    \eqref{eq: NC refined2-3} \EEE we find
\begin{align*}
\limsup_{n \to \infty}\mathcal{H}^1(J^{\rm bad}_{u_n}) \le \limsup_{n \to \infty}\mathcal{H}^1\Big( J_{u_n} \setminus \bigcup_{i=1}^m Q_{\rho_i}^{x_i}\Big) + \limsup_{n \to \infty}\sum_{i=1}^m\mathcal{H}^1(J^{\rm bad}_{u_n} \cap Q_{\rho_i}^{x_i}) \le \frac{5}{2}\theta + \sum_{i=1}^m  \frac{\theta}{4 \mathcal{H}^1(J_u)} \rho_i. 
\end{align*}
Then, in view of  \eqref{eq: many properties1-neu-1},   the assumption \EEE $\eta \le 1$, and \EEE the fact that the squares $(Q_{\rho_i}^{x_i})_{i=1}^m$ are pairwise disjoint,   we get
$${\limsup_{n \to \infty}\mathcal{H}^1(J^{\rm bad}_{u_n}) \le \frac{5}{2}\theta + \frac{\theta}{4 \mathcal{H}^1(J_u)} \frac{1}{1-\eta/2}  \sum_{i=1}^m \EEE \mathcal{H}^1(J_u \cap Q_{\rho_i}^{x_i}) \le 3 \theta,}  $$
 and the proof of~\eqref{eq: bad-theta} is thus concluded.\EEE

\noindent \emph{Step 3: Proof of \eqref{eq: main bad step}.} It remains to prove \eqref{eq: main bad step}.  Let us fix~$i \in \{1, \ldots, m\}$. After possible rotation and translation, we may suppose \EEE that $x_i =0$ and $\nu_u(x_i) = e_1$, and  we \EEE write $Q_{\rho_i}$ in place of $Q_{\rho_i}^{x_i}$.  

By the choice of $\eta$ in \eqref{eq: eta-neu} and the fact that $ x_i= 0  \in J_u^{\rm good}$ we particularly have $\eta \le 1/(7 \zeta ) \le  |u^+_{x_i} - u^-_{x_i}|/7$. Moreover, \eqref{eq: eta-neu} implies  $\eta < \min\lbrace \theta_0,10^{-4}\rbrace$. Thus, in view of   \eqref{eq: many properties2-neu}, we can apply Proposition \ref{prop: Korn}  to \EEE $v = u_n$  and $ \omega^\pm=u^\pm_{x_i}$  to find two  subsets $D^+_n, D^-_n \subset Q_{\rho_i}$ such that    \EEE
\begin{subequations}
\label{eq: two case4-anwendung}
 \begin{align}
 &  \Vert u_n - u^+_{x_i} \Vert_{L^\infty(D^+_n)} \le 3\eta \quad \quad \text{and} \quad \quad \Vert u_n - u^-_{x_i} \Vert_{L^\infty(D^-_n)} \le 3\eta, \label{eq: two case4-anwendung-1} \\
&   \mathcal{H}^1\Big( \big( (\partial^* D^+_n \cup \partial^* D^-_n) \setminus J_{u_n}\big) \cap Q_{\rho_i} \Big) \le  6\eta{\rho_i},\label{eq: two case4-anwendung-2}
\end{align}
\end{subequations}\EEE
and two curves $\Gamma^{\pm}_n \subseteq \partial^{*} D^{\pm}_n \cap Q_{\rho_i}$ connecting $(-\frac{\rho}{2}, \frac{\rho}{2}) \times \{ - \frac{\rho}{2}\}$ to $(-\frac{\rho}{2}, \frac{\rho}{2}) \times \{ \frac{\rho}{2}\}$.  For simplicity of notation, let us set $\Psi_n  \coloneqq  \Gamma_n^-$. \EEE As observed above, we have $\eta \le |u^+_{x_i} - u^-_{x_i}|/7$. Then   \eqref{eq: two case4-anwendung-1} implies   $J_{u_n} \supset \Psi_n$ up  to an $\mathcal{H}^1$-negligible set. \EEE  We now show that
\begin{subequations}
\label{eq: nocmal alles2}
\begin{align}
& \rho_i \le  \mathcal{H}^1( \Psi_n)  \le {\rho_i}(1+7\eta), \label{eq: nocmal alles2-1} \\
&
 \mathcal{H}^1\big( \Psi_n' \big) \le 7{\rho_i}\eta, \ \ \text{where $ \Psi_n' := \lbrace x \in \Psi_n \colon \,  \nu_{u_n}(x) \cdot e_1  < 0   \rbrace$},\label{eq: nocmal alles2-2}
\end{align}
\end{subequations} \EEE
and where we choose the orientation of $\nu_{u_n}$ such that $\nu_{u_n}$  coincides with the outer normal to $D_n^-$. 
Inequality \eqref{eq: nocmal alles2-1} \EEE follows from \eqref{eq: many properties2-neu-1} and \eqref{eq: two case4-anwendung-2}. As for~\eqref{eq: nocmal alles2-2}, by the fact $\Psi_n$ is a curve connecting $(-\frac{\rho}{2}, \frac{\rho}{2}) \times \{ - \frac{\rho}{2}\}$ to $(-\frac{\rho}{2}, \frac{\rho}{2}) \times \{ \frac{\rho}{2}\}$ we find that 
$$ \Psi_n'   \subset \lbrace x \in \Psi_n \colon \,  t_{\Psi_n} \cdot e_2  < 0   \rbrace,$$
where $t_{\Psi_n}$ denotes the tangent vector of the curve $\Psi_n$. This along with \eqref{eq: nocmal alles2-1}  shows \EEE \eqref{eq: nocmal alles2-2}.  

 We recall the definition of $\lambda$ in \eqref{eq: lambda-neu} and define
$${ \Psi''_n \EEE  \coloneqq \lbrace x \in \Psi_n \colon \,  |\nu_{u_n}(x) - e_1 |\le  \lambda   \rbrace.}$$
Since $  x_{i} = 0  \in J_u^{\rm good}$, we have  $| u^+_{x_{i}} - u^-_{x_{i}} | \le  \zeta$  as well as $( u^+_{x_{i}} - u^-_{x_{i}}  ) \cdot e_1 > 2\tau$, and thus  for each $x \in \Psi''_n$ we deduce from  \eqref{eq: two case4-anwendung-1}, \eqref{eq: lambda-neu}, and the fact that $\eta < \tau/12$ (see \eqref{eq: eta-neu}) that
\begin{align*}
[u_n](x) \cdot \nu_{u_n}(x) & \ge  ( u^+_{x_{i}} - u^-_{x_{i}}  ) \cdot \nu_{u_n}(x)         -6\eta  \\ 
& \ge  ( u^+_{x_{i}} - u^-_{x_{i}} )  \cdot e_1  -  | u^+_{x_{i}} - u^-_{x_{i}}  | \lambda       -6\eta  \ge 2\tau  - \zeta \lambda -6\eta > \tau.
\end{align*} 
This implies that $\Psi''_n \cap J_{u_n}^{\rm bad} = \emptyset$. Based on this, we now derive \eqref{eq: main bad step}. First, we observe that  
\begin{align}\label{eq: gamma---XXX}
\mathcal{H}^1\big(\Psi_n \setminus \YYY  (\Psi'_n  \cup \Psi''_n) \EEE \big) \le  \frac{\YYY 14 \EEE {\rho_i}\eta }{\lambda^2}. \EEE 
\end{align}
Indeed, for $x \in \Psi_n \setminus \YYY  (\Psi'_n  \cup \Psi''_n) \EEE $ we find $\YYY 0 \le \EEE e_1 \cdot \nu_{u_n}(x) =1 - |e_1 - \nu_{u_n}(x)|^2/2 \le 1-\lambda^2/2$ by a simple expansion. Then,  by the area formula and by  \YYY \eqref{eq: nocmal alles2-1} \EEE  we estimate 
\begin{align*}
\YYY {\rho_i} \EEE &\le \int_{\Pi^{e_1}}  \mathcal{H}^0((\Psi_n)^{e_1}_w)      \, {\rm d}\mathcal{H}^1(w) =     \int_{\Psi_n} |\nu_{u_n} \cdot e_1| \, {\rm d}\mathcal{H}^1 \le \mathcal{H}^1(\YYY \Psi'_n \cup \Psi_n'' \EEE )+ (1-\tfrac{\lambda^2}{2})\mathcal{H}^1(\Psi_n \setminus \YYY  (\Psi'_n  \cup \Psi''_n) \EEE ) \\ &  = \mathcal{H}^1(\Psi_n) - \tfrac{\lambda^2}{2}\mathcal{H}^1(\Psi_n \setminus \YYY  (\Psi'_n  \cup \Psi''_n) \EEE )  \le {\rho_i} +7{\rho_i}\eta  -  \tfrac{\lambda^2}{2}\mathcal{H}^1(\Psi_n \setminus \YYY  (\Psi'_n  \cup \Psi''_n) \EEE ),
\end{align*}
 which \EEE yields   \eqref{eq: gamma---XXX}. 

 Since $\Psi''_n \cap J_{u_n}^{\rm bad} = \emptyset$ and $J_{u_n} \cap Q_{{\rho_i}} \supset \Psi_n$, we conclude by  \eqref{eq: many properties2-neu-1}, \YYY \eqref{eq: nocmal alles2}, \EEE and  \eqref{eq: gamma---XXX} that 
\begin{align*}
\mathcal{H}^1\big( J^{\rm bad}_{u_n} \cap Q_{\rho_i} \big) &\le \mathcal{H}^1\Big( (J_{u_n}\setminus \Psi''_n) \cap Q_{\rho_i}   \Big) \le \mathcal{H}^1\Big( (J_{u_n}\setminus \Psi_n) \cap Q_{\rho_i}   \Big)  +  \mathcal{H}^1\big(\Psi_n \setminus \Psi''_n\big) \\
&  =  \mathcal{H}^1\big( J_{u_n} \cap Q_{\rho_i}   \big) -  \mathcal{H}^1(\Psi_n \cap Q_{\rho_i}\EEE)  +  \mathcal{H}^1\big(\Psi_n \setminus \Psi''_n\big) \\&\le {\rho_i}(1+\eta) - \YYY {\rho_i} \EEE +   \frac{\YYY 14 \EEE {\rho_i}\eta }{\lambda^2} + \YYY 7{\rho_i}\eta . \EEE
\end{align*}
Therefore, $\mathcal{H}^1( J^{\rm bad}_{u_n} \cap Q_{\rho_i}) \le \YYY 14 \EEE \eta{\rho_i}(1+1/\lambda^2)$ which by~\eqref{eq: eta-neu} implies  \eqref{eq: main bad step}. This concludes the proof. 
\end{proof}



We close this section with the proof of Theorem \ref{th: recovery}. \YYY Recall the definition in \eqref{eq: boundary-spaces}. \EEE

\begin{proof}[Proof of Theorem \ref{th: recovery}]
Consider $u \in GSBD^2_h  (\Omega') \EEE$  with $h \in W^{2,\infty}(\Omega';\R^d)$ satisfying  \eqref{eq: CC}. \EEE Let $\gamma \in (\frac{2}{3},\beta)$. By Lemma \ref{lemma: contact} and  the definition of the energy in \eqref{rig-eq: Griffith en-lim} we obtain sequences $(\tau_n)_n \subset (0,+\infty)$ and   $(u_n)_n \subset GSBD^2_h(\Omega')$ such that $u_n \to u$ in measure on $\Omega'$, $\mathcal{E}(u_n) \to \mathcal{E}(u)$, and 
$$\theta_n:=\mathcal{H}^1\big(\lbrace x \in J_{u_n}\colon \,   [u_n](x) \cdot \nu_{u_n}(x) \le 2\tau_n \rbrace \big) \to 0 \quad \text{ as $n \to  \infty \EEE$}.$$  
Since the convergence in  Definition \ref{def:conv} allows for diagonal arguments (measure convergence and weak convergence on bounded sets are metrizable), it suffices to 
construct  for every~$u_{n}$ a sequence~$(y^n_\eps)_\eps$ such that $y^n_\eps \rightsquigarrow u_n$ and $\limsup_{\eps \to 0} \mathcal{E}_\eps (y^n_\eps) \le \mathcal{E}(u_n) + 12\theta_n$. Then, since $u_n \to u$ in measure on $\Omega'$ and $\mathcal{E}(u_n) \to \mathcal{E}(u)$, by a diagonal argument and \cite[Theorem 2.7(ii)]{higherordergriffith} we obtain an energy-convergent sequence for~$u$. \EEE  

To simplify notation,  in what follows we drop the index $n$, so that we consider a function $u \in GSBD_h^2(\Omega')$ and two positive parameters $\theta$ and $\tau$ such that \EEE
\begin{align}\label{eq: needed}
  \theta \coloneqq \EEE \mathcal{H}^1\big(\lbrace x \in J_{u}\colon \,   [u](x) \cdot \nu_u(x) \le 2\tau \rbrace \big)  ,
\end{align}
 and we construct a sequence $(y_\eps)_\eps$, $y_\eps \in \mathcal{S}_{\eps,h}$, satisfying   \eqref{e:CN} and  such that  $ y_\eps \rightsquigarrow u$ and
\begin{align}\label{eq: for recov}
 \limsup_{\eps \to 0 } \, \mathcal{E}_{\eps}(y_\eps) \le \mathcal{E}(u)    +12\theta.  
\end{align} \EEE

\noindent \emph{Step 1: Construction of $(y_\eps)_\eps$.} In view of \eqref{eq: needed}, we can apply Theorem \ref{th: density new} to find a sequence $(v_\eps)_\eps \subset GSBV^2_2(\Omega';\R^2)$  such that $v_\eps = h$ on $\Omega' \setminus \overline{\Omega}$, the jump set $J_{v_{\eps}}$ of~$v_{\eps}$ is a finite union of disjoint segments $(S^i_\eps)_{i=1}^{m_\eps}$, $v_\eps \in W^{2,\infty}(\Omega' \setminus J_{v_\eps};\R^2)$, and the following conditions hold:
\begin{subequations}
\label{eq: dense-in-appli}
\begin{align}
 & \ v_\eps \to  u  \text{ in measure on } \Omega' \text{ as $\eps \to 0$},\label{eq: dense-in-appli-1}\\
 &  \lim_{\eps \to 0} \, \Vert e(v_\eps) - e(u) \Vert_{L^2(\Omega')} = 0,\label{eq: dense-in-appli-2}\\
 &   \lim_{\eps \to 0} \,  \mathcal{H}^{1}  (J_{v_\eps})  =   \mathcal{H}^{1}  (J_{u}),\label{eq: dense-in-appli-3}\\
 &   \  \Vert v_\eps \Vert_{L^\infty(\Omega')} +    \Vert \nabla v_\eps \Vert_{L^\infty(\Omega')} +  \Vert \nabla^2 v_\eps \Vert_{L^\infty(\Omega')}  \le \eps^{(\beta-1)/2} \le \eps^{\gamma-1},\label{eq: dense-in-appli-4}\\  
 &  \dist(S_\eps^i,S_\eps^j) \ge 4 \sqrt{\eps} \text{ for all $1\le i < j \le m_\eps$,}\label{eq: dense-in-appli-5}\\
 &  \dist(S_\eps^i, \Omega' \setminus \Omega) \ge 4\sqrt{\eps} \text{ for all $1 \le i \le m_\eps$}, \label{eq: dense-in-appli-6}\\
&  \limsup_{\eps \to 0}  \, \mathcal{H}^1\big(\lbrace x \in J_{v_\eps}\colon \,   [v_\eps](x) \cdot \nu_{v_\eps}(x) \le \tau \rbrace \big) \le 3\theta. \label{eq: good-t2X}
\end{align}
\end{subequations}
Indeed, properties  \eqref{eq: dense-in-appli-1}--\eqref{eq: dense-in-appli-3} and \eqref{eq: good-t2X} \EEE follow directly from  Theorem \ref{th: density new}. Property  \eqref{eq: dense-in-appli-4} \EEE can be achieved by a diagonal argument since the approximations satisfy $v_\eps \in W^{2,\infty}(\Omega' \setminus J_{v_\eps};\R^{ 2 \EEE})$. (Recall $\gamma < \beta < 1$.) Eventually, properties  \eqref{eq: dense-in-appli-5} and \eqref{eq: dense-in-appli-6} \EEE can again be guaranteed by a diagonal argument since the segments  $(S^i_\eps)_{i=1}^{m_\eps}$ are closed,  pairwise disjoint, and do not intersect a neighborhood of $\Omega' \setminus \Omega$.    Moreover,  $v_\eps \in W^{2,\infty}(\Omega' \setminus J_{v_\eps};\R^2)$ also implies $J_{\nabla v_\eps} \subset J_{v_\eps}$.  

Since $v_\eps \in W^{2,\infty}(\Omega' \setminus J_{v_\eps};\R^2)$ and $J_{v_\eps}$ consists of a finite number of segments, by the coarea formula applied on $x \mapsto [v_\eps](x) \cdot \nu_{v_\eps}(x)$ we find $\tau_\eps \in (\tau/2,\tau)$ such that 
\begin{align}\label{eq: vbad}
J_{v_\eps}^{\rm bad} := \lbrace x \in J_{v_\eps}\colon \,   [v_\eps](x) \cdot \nu_{v_\eps}(x) \le \tau_\eps \rbrace
\end{align}
 consists of a finite number of segments $(T^i_\eps)_{i=1}^{n_\eps}$. We cover these segments by pairwise disjoint rectangles $R_\eps^i$, $i=1,\ldots,n_\eps$, of length $\mathcal{H}^1(T^i_\eps)$ and height $\min \lbrace \mathcal{H}^1(T^i_\eps), \sqrt{\eps}\rbrace$ such that~$T^i_\eps$ separates~$R_\eps^i$ into two rectangles of length $\mathcal{H}^1(T^i_\eps)$ and height $\min \lbrace \mathcal{H}^1(T^i_\eps), \sqrt{\eps}\rbrace/2$, as in Figure~\ref{f:3}.
 
 \medskip
 \begin{figure}[h!]
 \begin{tikzpicture}
 \draw[black, very thick, rotate around={60: (-2, -2)}] (-2, -2) -- (1.2, -2);
 \draw[black, very thick, rotate around={60:(-2,-2)}] (-2,-2.8) rectangle (1.2,-1.2);
 \draw [decorate,decoration={brace, amplitude=5pt}, xshift=-4pt,yshift=2pt, rotate around={60:(-2, -2)}]
(-2, -1.2) -- (1.21,-1.2) node [black,midway,xshift=-0.8cm, yshift=0.2cm] { $\mathcal{H}^{1}(T^{i}_{\eps})$};
 \draw [decorate,decoration={brace, amplitude=5pt}, xshift=-3pt,yshift=-4pt, rotate around={60:(-2, -2)}]
(-2, -2) -- (-2,-1.2) node [black,midway,xshift=-1.6cm, yshift=-0.5cm] { $\displaystyle \frac{\min\{\mathcal{H}^{1}(T^{i}_{\eps}), \sqrt{\eps}\}}{2}$};
\node at (-1, -1) {\large{$T^{i}_{\eps}$}};
\node at (-0.5,-2) {\large{$R^{i}_{\eps}$}};
 \draw[black, very thick] (2,-1) -- (5, -1);
 \node at (3.5, -0.7) {\large{$T^{j}_{\eps}$}};
 \draw[black, very thick] (2, -1.75) rectangle (5, -0.25);
 \node at (5.4, -1.5) {\large{$R^{j}_{\eps}$}};
 \end{tikzpicture}
 \caption{The rectangles $R^{i}_{\eps}$ and $R^{j}_{\eps}$}\label{f:3}
 \end{figure}
 
  Clearly, by~\eqref{eq: good-t2X} and~\eqref{eq: vbad} we obtain
\begin{align}\label{eq balls}
 \sum_{i=1}^{n_\eps} \EEE \mathcal{H}^1(\partial R_\eps^i) \le  \sum_{i=1}^{n_\eps} \EEE 4\mathcal{H}^1(T^i_\eps)\le 4\mathcal{H}^1\big(J_{v_\eps}^{\rm bad}\big)\le 12\theta.
\end{align}   
 We define 
\begin{align}\label{eq: ww}
w_\eps  \coloneqq \EEE  v_\eps\chi_{\Omega' \setminus  \bigcup_{i=1}^{n_\eps}  R_\eps^i } +  \sum_{i=1}^{n_\eps} \EEE s^i_\eps \chi_{R_\eps^i}
\end{align}
for suitable constants $(s_\eps^i)_i \subset \R^2$ for which the functions  $y_\eps  \coloneqq \EEE \id + \eps w_\eps$  are such that \EEE  the sets
\begin{align}\label{eq: injec}
\big[y_\eps\big(\Omega' \setminus  \bigcup_{i=1}^{n_\eps} \EEE R_\eps^i\big) \big],  \quad [y_\eps(R_\eps^i) ], \ i=1,\ldots,n_\eps, \quad \text{are pairwise disjoint}.   
\end{align}
 Note that this is  possible since $v_\eps \in L^\infty(\Omega';\R^2)$. By construction and  by \eqref{eq: dense-in-appli-6} we see that the rectangles $(R_\eps^i)_i$ do not intersect $\Omega' \setminus \overline{\Omega}$.  As $v_\eps \in GSBV_2^2(\Omega';\R^2)$ and  $v_\eps = h$ on $\Omega' \setminus \overline{\Omega}$,   we get $y_\eps \in \mathcal{S}_{\eps,h}$, see~\eqref{eq: boundary-spaces}.

\noindent \emph{Step 2:  Ciarlet-Ne\v{c}as \EEE condition.} We now check that $y_\eps$ is injective. Clearly, $y_\eps$ is injective on each~$R_\eps^i$, $i=1,\ldots,n_\eps$. In view of \eqref{eq: injec}, it suffices to  check that $y_\eps$ is also injective on $\Omega' \setminus \bigcup\nolimits_{i=1}^{n_\eps} R_\eps^i$. To this end, fix arbitrary $x_1,x_2 \in \Omega' \setminus \bigcup\nolimits_{i=1}^{n_\eps} R_\eps^i$, $x_1 \neq x_2$, and recall that $ y_\eps  = \id +  \eps \EEE v_\eps$ on $\Omega' \setminus \bigcup\nolimits_{i=1}^{n_\eps} R_\eps^i$.  We distinguish between two cases according to the distance between $x_{1}$ and~$x_{2}$. \EEE

\noindent \emph{Case 1. $ | x_{1} - x_{2}| \ge \sqrt{\eps}$.\EEE}  By \eqref{eq: dense-in-appli-4} \EEE and  $\gamma >\frac{2}{3}$  we get \EEE
\begin{align*}
|y_\eps(x_1) - y_\eps(x_2)| \ge |x_1 - x_2| - 2\eps \Vert v_\eps \Vert_{L^\infty(\Omega')} \ge \sqrt{\eps} - 2\eps \eps^{\gamma-1} >0. 
\end{align*}
\noindent \emph{Case 2. $ | x_{1} - x_{2}| < \sqrt{\eps}$.\EEE}  Inequality~\eqref{eq: dense-in-appli-5} \EEE implies that the segment between $x_1$ and $x_2$, denoted by $[x_1;x_2]$, intersects at most one segment $S_\eps^i$.  We subdivide this case in two subcases, distinguishing between $[x_{1}; x_{2}] \cap \YYY S_{\eps}^{i}  \EEE = \emptyset$ and $[x_{1}; x_{2}] \cap \YYY S_{\eps}^{i} \EEE \neq \emptyset$. \EEE 

\noindent \emph{Case 2\,{\rm (i)}.}  If $[x_1;x_2]$ does not intersect one of the segments $S_\eps^i$, $v_\eps$ is Lipschitz in a neighborhood of $[x_1;x_2]$, and we get by  \eqref{eq: dense-in-appli-4} \EEE and  $\gamma >\frac{2}{3}$ that 
\begin{align*}
|y_\eps(x_1) - y_\eps(x_2)| \ge |x_1 - x_2| - \eps |x_1 -x_2| \Vert \nabla v_\eps \Vert_{L^\infty(\Omega')} \ge  |x_1 - x_2|  (1 - \eps\eps^{\gamma-1}) >0. 
\end{align*}
\noindent \emph{Case 2\,{\rm (ii)}.} Let us now suppose that $[x_1;x_2]$ intersects $S_\eps^i$.   By construction of $R_\eps^i$, we can find a piecewise affine curve $\Gamma \colon [0,l_\Gamma] \to \Omega' \setminus \bigcup\nolimits_{i=1}^{n_\eps} R_\eps^i$ with $\Gamma(0) = x_1$, $\Gamma(l_\Gamma) = x_2$, parametrized by arc-length,  such that
\begin{align}\label{eq: lengthi}
{\rm (a)} \ \  l_\Gamma = |x_1-x_2| \quad \quad  \text{or} \quad \quad {\rm (b)} \ \ l_\Gamma \le    |x_1 - x_2| + \mathcal{H}^1(\partial R_\eps^j),  
\end{align}
where case (b) holds if $[x_1;x_2]$ intersects some $T_\eps^j \subset J_{v_\eps}^{\rm bad}$. Moreover, we have that  $\Gamma(t) \in  S_\eps^i$ for at most one $t\in (0,l_\Gamma)$, where in this case we have $ \nu_{S_\eps^i}  \cdot \Gamma'(t) \ge 0$, where $\nu_{S_\eps^i}$ denotes the normal vector to  $S_\eps^i$  oriented such that $ \nu_{S_\eps^i}  \cdot (x_2-x_1) \ge 0$  (see Figure~\ref{f:4}). 

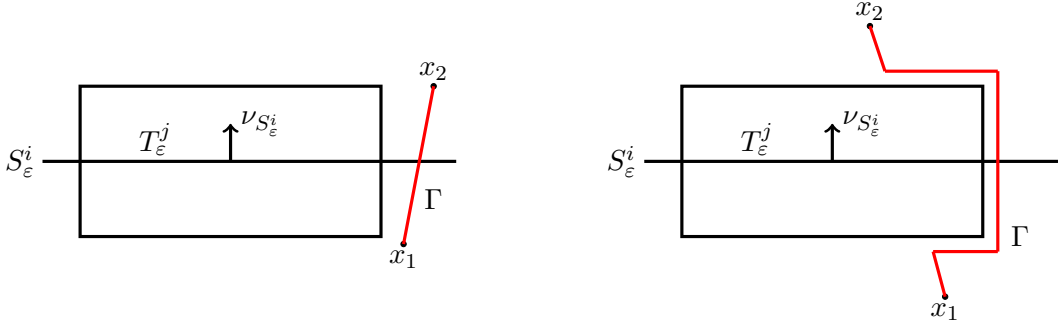
\begin{figure}[h!]
\begin{tikzpicture}
\draw[black, very thick] (0,0) rectangle (4,2);
\draw[black, very thick] (-0.5,1) -- (5, 1);
\node at (1, 1.3) {\large{$T^{j}_{\eps}$}};
\draw[black, very thick, ->] (2, 1) -- (2, 1.5);
\node at (2.4, 1.5) {\large{$\nu_{S^{i}_{\eps}}$}};
\node at (-0.8, 1) {\large{$S^{i}_{\eps}$}};
\node at (4.7, 2.2) {\large{$x_{2}$}};
\node at (4.3, -0.3) {\large{$x_{1}$}};
\draw [fill] (4.7,2) circle (1pt);
\draw [fill] (4.3, -0.1) circle (1pt);
\draw[red, very thick] (4.7, 2) -- (4.3, -0.1);
\node at (4.7, 0.5) {\large{$\Gamma$}};

\draw[black, very thick] (8, 0) rectangle (12,2);
\draw[black, very thick] (7.5,1) -- (13, 1);
\node at (9, 1.3) {\large{$T^{j}_{\eps}$}};
\draw[black, very thick, ->] (10, 1) -- (10, 1.5);
\node at (10.4, 1.5) {\large{$\nu_{S^{i}_{\eps}}$}};
\node at (7.2, 1) {\large{$S^{i}_{\eps}$}};
\node at (10.5,3) {\large{$x_{2}$}};
\node at (11.5, -1) {\large{$x_{1}$}};
\draw [fill] (10.5,2.8) circle (1pt);
\draw [fill] (11.5, -0.8) circle (1pt);
\draw[red, very thick] (10.5,2.8) -- (10.7,2.2);
\draw[red, very thick] (10.7, 2.2) -- (12.2, 2.2);
\draw[red, very thick] (12.2, 2.2) -- (12.2, -0.2);
\draw[red, very thick] (12.2, -0.2) -- (11.34, -0.2);
\draw[red, very thick] (11.34, -0.2) -- (11.5, -0.8); 
\node at (12.5, 0) {\large{$\Gamma$}};
\end{tikzpicture}
\caption{Visualization of the curve $\Gamma$ in the case $\mathrm{(a)}$ (left) and $\mathrm{(b)}$ (right) of~\eqref{eq: lengthi}. }
\label{f:4}
\end{figure} 

 If~{\rm (a)} of~\eqref{eq: lengthi} holds, then
\begin{equation}
\label{e:length-l}
{\rm (a)} \ \ l_{\Gamma} \leq \sqrt{\eps}\,.
\end{equation}
If~{\rm (b)} of~\eqref{eq: lengthi} holds, we further distinguish two cases, namely
\begin{equation}
\label{e:length-l-3}
\mathrm{(b_{1})} \ \ \mathcal{H}^{1}(T^{j}_{\eps}) \geq \sqrt{\eps} \qquad \text{or} \qquad \mathrm{(b_{2})} \ \ \mathcal{H}^{1}(T^{j}_{\eps}) < \sqrt{\eps}\,.
\end{equation}
If~$\mathrm{(b_{2})}$ holds, we immediately infer that
\begin{equation}
\label{e:length-l-2}
\mathrm{(b_{2})} \ \ l_{\Gamma} \leq 5\sqrt{\eps}\,. 
\end{equation}
In the case~$\mathrm{(b_{1})}$, instead, we get that $x_k + \R\nu_{S_\eps^i}$ intersects $T_\eps^j$ for some $k=1,2$, and since  $R^j_\eps$  has height $\min \lbrace \mathcal{H}^1(T^j_\eps), \sqrt{\eps}\rbrace  = \sqrt{\eps}$, we get that
\begin{align}\label{eq: projecti}
 \mathrm{(b_{1})}\EEE  \ \  (x_2-x_1)\cdot \nu_{S_\eps^i}   \ge   \sqrt{\eps}/2 \qquad \text{and} \qquad l_\Gamma \le    \sqrt{\eps}  + 4\mathcal{H}^1(T_\eps^j)  \le 5\mathcal{H}^1(T_\eps^j)\,. 
\end{align}

In the following, we will only treat  the cases $\mathrm{(b_{1})}$ and~$\mathrm{(b_{2})}$, \EEE as the argument for {\rm (a)} is easier. By the fundamental theorem of calculus we compute 
\begin{align*}
  y_\eps(x_2) -  y_\eps(x_1) = x_2 - x_1 + \int_0^t  \eps\nabla v_\eps(s) \cdot \Gamma'(s) \, {\rm ds} +   \eps [v_\eps](\Gamma(t))     + \int_t^{l_\Gamma}  \eps \nabla v_\eps(s) \cdot \Gamma'(s) \, {\rm ds}\, , 
\end{align*}
where $t$ is chosen uniquely such that $\Gamma(t) \in S_\eps^i$. Since $\Gamma(t) \in J_{v_\eps} \setminus J_{v_\eps}^{\rm bad}$, we get $[v_\eps] \cdot\nu_{\YYY S_\eps^i\EEE} \ge \tau_\eps \ge \tau/2$.   If $\mathrm{(b_{1})}$ of~\eqref{e:length-l-3} holds, by using  \eqref{eq: dense-in-appli-4}, property $\mathrm{(b_{1})}$ in~\eqref{eq: projecti}, and the arc-length parametrization of~$\Gamma$, giving $|\Gamma'| \equiv 1$, we find
\begin{align*}
  \big(y_\eps(x_2) -  y_\eps(x_1)\big) \cdot  \nu_{S_\eps^i} &\ge (x_2-x_1) \cdot \nu_{S_\eps^i} - l_\Gamma \eps^\gamma  + \eps [v_\eps](\Gamma(t)) \cdot \nu_{S_\eps^i}  \\&\ge  \frac{1}{2}    \sqrt{\eps}    - l_\Gamma \eps^\gamma +  \frac{\tau}{2} \eps
  \geq \frac{1}{2}  \sqrt{\eps}   +  \frac{\tau}{2} \eps  -  5  \eps^\gamma \mathcal{H}^{1}(T^{j}_{\eps})\,. 
\end{align*}
If~$\mathrm{(b_{2})}$ of~\eqref{e:length-l-3} holds, arguing in a similar way   and using~$\mathrm{(b_{2})}$ in~\eqref{e:length-l-2}  we obtain  
\begin{align*}
  \big(y_\eps(x_2) -  y_\eps(x_1)\big) \cdot  \nu_{S_\eps^i} &\ge (x_2-x_1) \cdot \nu_{S_\eps^i} - l_\Gamma \eps^\gamma  + \eps [v_\eps](\Gamma(t)) \cdot \nu_{S_\eps^i}   \ge    0  -        5 \eps^{(\gamma + \frac{1}{2})}  +  \frac{\tau}{2} \eps. 
\end{align*}
In both cases, since~$\gamma >\frac{2}{3}$ we find that $(y_{\eps}(x_{2}) - y_{\eps}(x_{1})) \cdot \nu_{S_{\eps}^{i}} >0$ for $\eps$ sufficiently small, depending only on~$\mathcal{H}^{1}(J_{u})$ and $\tau$. This shows $y_\eps(x_1) \neq y_\eps(x_2)$ and yields that $y_\eps$ is injective.  

By  \eqref{eq: dense-in-appli-4} \EEE and \eqref{eq: ww}  we further get ${\rm det}(\nabla y_\eps)>0$ for a.e.\ $x \in \Omega'$, provided that $\eps$ is sufficiently small. Therefore, $y_\eps$ satisfies the   Ciarlet-Ne\v{c}as \EEE non-interpenetration  condition.

\noindent \emph{Step 3: Convergence of functions and energies.}   We now check that  $y_\eps \rightsquigarrow u$ in the sense of Definition~\ref{def:conv}. We define $y_\eps^{\rm  rot} = y_\eps$, i.e., the Caccioppoli partition in \eqref{eq: modifica} consists of the set~$\Omega'$ only with corresponding rotation $\Id$. As  $\nabla y_\eps^{\rm rot} - \Id = \eps \nabla v_\eps\chi_{\Omega' \setminus \bigcup\nolimits_{i=1}^{n_\eps} R_\eps^i }$,  \eqref{eq: first conditions-2}--\eqref{eq: first conditions-3.5} follow from  \eqref{eq: dense-in-appli-2} and~\eqref{eq: dense-in-appli-4}. The rescaled displacement fields $u_\eps$ defined in~\eqref{eq: modifica} satisfy $u_\eps = v_\eps\chi_{\Omega' \setminus \bigcup\nolimits_{i=1}^{n_\eps} R_\eps^i }$. Then, \eqref{eq: the main convergence-1}--\eqref{eq: the main convergence-4}  for $E_u = \emptyset$ follows from  \eqref{eq: dense-in-appli-1}--\eqref{eq: dense-in-appli-2}, the lower semicontinuity result in~\cite[Theorem 11.3]{DalMaso:13}, and the fact that
$$ \sum_{i=1}^{n_\eps} \mathcal{L}^2(R_\eps^i) \le \sqrt{\eps}  \sum_{i=1}^{n_\eps} \EEE \mathcal{H}^1(T_\eps^i) \le 3\sqrt{\eps}\theta, $$
where in the last step we used \eqref{eq balls}.

Finally, we confirm  \eqref{eq: for recov}. Since $J_{\nabla y_\eps} \subset J_{y_\eps}$ and  \eqref{eq: dense-in-appli-3}, \eqref{eq balls}, and \eqref{eq: ww} hold, we get \EEE 
\begin{align*}
\limsup_{\eps \to 0} \mathcal{H}^1(J_{y_\eps}) \le  \limsup_{\eps \to 0} \mathcal{H}^1(J_{v_\eps}) + \limsup_{\eps \to 0}  \sum_{i=1}^{n_\eps} \EEE \mathcal{H}^1(\partial R_\eps^i)  \le \mathcal{H}^1(J_u) + 12\theta.
\end{align*}
Consequently, by the definition of the energies in \eqref{rig-eq: Griffith en} and   \eqref{rig-eq: Griffith en-lim}, it suffices to show 
\begin{equation}
\label{eq: final}
\lim_{\eps \to 0} \Big( \frac{1}{\eps^2}\int_{\Omega'} W(\nabla y_\eps)  \,{\rm d}x   + \frac{1}{\eps^{2\beta}} \int_{\Omega'} |\nabla^2 y_\eps|^2  \,{\rm d}x \Big) = \int_{\Omega'} \frac{1}{2} Q(e(u))  \,{\rm d}x  \,.
\end{equation}
The second term  in~\eqref{eq: final} vanishes by \eqref{eq: dense-in-appli-4},  $\beta <1$, and the fact that $\nabla^2 y_\eps = \eps \nabla^2 v_\eps$. For the first term  in~\eqref{eq: final},  we use that $W(\Id + F) =  \frac{1}{2}Q( {\rm sym}(F)) + \omega(F)$ with $|\omega(F)|\le C|F|^3$ for $|F| \le 1$, and  compute by  \eqref{eq: dense-in-appli-2} and \eqref{eq: dense-in-appli-4} \EEE
\begin{align*} 
\lim_{\eps \to 0}\frac{1}{\eps^2} \int_{\Omega'} W(\nabla y_\eps)  \,{\rm d}x  &  \leq  \lim_{\eps \to 0} \frac{1}{\eps^2} \int_{\Omega'} W(\Id +  \eps  \nabla v_\eps) \,{\rm d}x   = \lim_{\eps \to 0}  \int_{\Omega'}  \Big( \frac{1}{2} Q(e(v_\eps))  \,{\rm d}x  + \frac{1}{\eps^2} \omega(\eps \nabla v_\eps) \,{\rm d}x   \Big)  \\ & = \int_{\Omega'}   \frac{1}{2} Q(e(u)) \,{\rm d}x  +   \lim_{\eps \to 0}\int_{\Omega'}    {\rm O}\big( \eps|\nabla v_\eps|^3 \big) \,{\rm d}x  = \int_{\Omega'}   \frac{1}{2} Q(e(u)) \,{\rm d}x  \,,
\end{align*}
 where in the last step we have used that $\Vert \nabla v_\eps \Vert_{L^\infty(\Omega')} \le C\eps^{\gamma - 1}$ for some $\gamma >2/3$. This concludes the proof  of~\eqref{eq: final} and of the theorem. \EEE  
\end{proof}


\section*{Acknowledgements} 
This work was supported by the DFG project FR 4083/1-1 and by the Deutsche Forschungsgemeinschaft (DFG, German Research Foundation) under Germany's Excellence Strategy EXC 2044 -390685587, Mathematics M\"unster: Dynamics--Geometry--Structure.   This research was additionally supported by the Austrian Science Fund (FWF) projects F65, V 662, Y1292, and I 4052, as well as from the OeAD-WTZ projects CZ04/2019 and CZ 01/2021. \EEE


 \typeout{References}

\end{document}